\theoremstyle{plain}
\newtheorem{thm}{Theorem}[section]
\newtheorem{prop}[thm]{Proposition}
\newtheorem{lem}[thm]{Lemma}
\newtheorem{fact}[thm]{Fact}
\newtheorem*{thm*}{Theorem}
\newtheorem*{prop*}{Proposition}
\newtheorem*{lem*}{Lemma}
\newtheorem*{cor*}{Corollary}
\newtheorem*{fact*}{Fact}
\newenvironment{numthm}[1]
{\customthm}
{\endcustomthm}
\theoremstyle{definition}
\newtheorem{defn}[thm]{Definition}
\newtheorem{eg}[thm]{Example}
\newtheorem{notn}[thm]{Notation}
\theoremstyle{remark}
\newtheorem{rem}[thm]{Remark}
\newtheorem*{term*}{Terminology}
\newtheorem{qn}[thm]{Question}
\newtheorem*{qn*}{Question}
\newenvironment{subproof}[1][\proofname]{%
\par\textit{Proof of claim. }}{%
  \hfill $\blacksquare$ \par 
}
\crefname{lem}{lemma}{lemmas}
\Crefname{lemma}{Lemma}{Lemmas}
\crefname{prop}{proposition}{propositions}
\crefname{thm}{theorem}{theorems}
\crefname{defn}{definition}{definitions}
\crefname{rem}{remark}{remarks}
\crefname{fact}{fact}{facts}
\DeclareMathOperator{\Age}{Age}
\DeclareMathOperator{\Aut}{Aut}
\DeclareMathOperator{\ext}{ext}
\DeclareMathOperator{\Th}{Th}
\DeclareMathOperator{\tp}{tp}
\DeclareMathOperator{\Wr}{Wr}
\newcommand{\twopartdef}[4]
{
	\left\{
	\begin{array}{ll}
		#1 & \mbox{if } #2 \\
		#3 & \mbox{if } #4
	\end{array}
	\right.
}
\newcommand{\threepartdef}[6]
\newcommand{\N}{
	\mathbb{N}
}
\newcommand{\mc}[1]{
	\mathcal{#1}
}
\newcommand{\ov}[1]{
	\overline{#1}
}
\newcommand{\ex}{
	\exists\,
}
\newcommand{\sub}{
	\subseteq
}
\newcommand{\fin}{
	\subseteq_{\text{fin.\!}}
}
\newcommand{\Jarik}{Ne\v{s}et\v{r}il }
\newcommand{\NR}{Ne\v{s}et\v{r}il-R\"{o}dl }
\newcommand{\NRn}{Ne\v{s}et\v{r}il-R\"{o}dl}
\newcommand{\Fr}{Fra\"{i}ss\'{e} }
\newcommand{\Frn}{Fra\"{i}ss\'{e}}
\newcommand{\omegaLomega}{\omega^{<\omega}}
\newcommand{\omegaomega}{\omega^{\omega}}
\proof\endcsname{\normalparindent}{0pt }{}{}
\colorlet{NadavColor}{-green!40!yellow}
\colorlet{RobColor}{pink}
\providecommand\@dotsep{5}
\renewcommand{\listoftodos}[1][\@todonotes@todolistname]{%
  \@starttoc{tdo}{#1}}
\author{Nadav Meir  \orcidlink{0000-0002-7774-2892}}
\address{\parbox{\linewidth}{Nadav Meir\\
Department of Mathematics, Ben-Gurion University of the Negev, \\
84105 Be'er-Sheva, Israel \\
and \\
Instytut Matematyczny, Uniwersytet Wroc{\l}awski\\
pl. Grunwaldzki 2/4 \\ 
50-384 Wroc{\l}aw, Poland
}
}
\email{math@nadav.me}
\thanks{The first author is supported by Israel Science Foundation grant number 555/21, Israel Science Foundation grant number 665/20, and Narodowe Centrum Nauki, Poland, grant 2016/22/E/ST1/00450.
}
\author{Rob Sullivan}
\address{\parbox{\linewidth}{Rob Sullivan\\
Institut f\"{u}r Mathematische Logik und Grundlagenforschung\\
Universit\"{a}t  M\"{u}nster\\  
Einsteinstrasse 62\\
48149  M\"{u}nster, Germany}}
\email{robertsullivan1990+maths@gmail.com}
\thanks{The second author is funded by the Deutsche Forschungsgemeinschaft (DFG, German Research Foundation) under Germany’s Excellence Strategy EXC 2044–390685587, Mathematics Münster: Dynamics–Geometry–Structure and by CRC 1442 Geometry: Deformations and Rigidity.}
\subjclass[2020]{05C55, 03C15, 37B05, 20B27, 05D10}
\keywords{externally definable, Ramsey property, type space, ultrahomogeneous, lexicographic product}
\title{The externally definable Ramsey property and fixed points on type spaces}
\date{\today}
\begin{document}

\begin{abstract}
    We discuss the externally definable Ramsey property, a weakening of the Ramsey property for relational structures, where the only colourings considered are those that are externally definable: that is, definable with parameters in an elementary extension. We show a number of basic results analogous to the classical Ramsey theory, and show that, for an ultrahomogeneous structure $M$ with countable age, the externally definable Ramsey property is equivalent to the dynamical statement that, for all $n \in \N$, every subflow of the space $S_n(M)$ of $n$-types has a fixed point. We discuss a range of examples, including results regarding the lexicographic product of structures.
\end{abstract}
      
\maketitle

\section{Introduction}

    The paper \cite{KPT05} of Kechris, Pestov and Todor\v{c}evi\'{c} established links between structural Ramsey theory and the topological dynamics of automorphism groups of countable ultrahomogeneous structures (the \emph{KPT correspondence}). Recently, the results of \cite{KPT05} (and generalisations - see, for example, \cite{NVT13} and \cite{Zuc16}) have been considered from a more model-theoretic perspective by various authors. In \cite{KP23}, key results from \cite{KPT05}, \cite{NVT13} and \cite{Zuc16} are reproved using model-theoretic techniques, and generalised to the case of possibly uncountable ultrahomogeneous structures. The paper \cite{Hru19} also considers Ramsey properties from the perspective of first-order logic, and \cite{HKP20} introduces the notion of (extremely) amenable first-order theories.

    In \cite{KLM}, particularly relevant in terms of the background to this paper, the authors define a strongly model-theoretic notion of \emph{(externally) definable colourings}. Various \emph{definable Ramsey properties} are defined by restricting attention to only those colourings that are first-order (externally) definable, and where structures embed via elementary embeddings. The paper \cite{KLM} thoroughly studies these definable Ramsey properties for first-order theories, phrased in terms of Ramsey properties for definable colourings of the monster model (or equivalently an $\aleph_0$-saturated, strongly $\aleph_0$-homogeneous model), and proves several KPT-correspondence-type results. These give dynamical characterisations of definable Ramsey properties of a theory, phrased in terms of the Ellis semigroup of the space of types over a monster model (see Theorem 1 of \cite{KLM}), and these characterisations have important consequences: certain definable Ramsey-type properties of theories imply that their Ellis groups are profinite (or even trivial). 
    
    The current paper will take a perspective more inspired by classical structural Ramsey theory: we consider Ramsey properties of structures, rather than theories, and we allow all embeddings. We are concerned with ultrahomogeneous structures, and do not assume $\aleph_0$-saturation. Our central aim is to investigate the \emph{externally definable Ramsey property}, where we restrict attention to colourings of a structure $M$ that are \emph{externally definable}: informally, where there exists a tuple in some $N \succeq M$ such that each colour set is defined by a formula with parameters from the tuple (see \Cref{extdefcoldef}). (This will resemble the definitions of externally definable colouring and externally definable elementary embedding Ramsey property (EDEERP) found in \cite[Definition 3.1 and Definition 3.11]{KLM}.) We work entirely in the context of countable relational languages. 
    
    We consider the following questions:
    \begin{enumerate}
        \item Which ultrahomogeneous structures have the externally definable Ramsey property?
        \item What is the dynamical content of the externally definable Ramsey property for ultrahomogeneous structures in general?
    \end{enumerate}
    We first show (in \Cref{basicpropertiessec}) that the externally definable Ramsey property satisfies similar basic properties to those of the classical Ramsey property, with the notable exception that the equivalence between the Ramsey property for an ultrahomogeneous structure and that for its age is only shown to hold for colourings externally defined by a \emph{finite} set of formulae (\Cref{Ramseyforage}). (The authors believe this latter result and its use in the proofs in this paper form a new approach.) We use these properties to prove that, under the assumption of QE, if we wish to show the externally definable Ramsey property for an ultrahomogeneous structure, it suffices to check externally definable colourings of substructures of size less than the maximum arity of the language (see \Cref{EDRPsimplecheck} - this resembles \cite[Proposition 3.17]{KLM}, though our proof techniques differ, and as we consider structures rather than theories, we do not assume that the structure is $\aleph_0$-saturated). In \Cref{exsec} we then apply \Cref{EDRPsimplecheck} to a range of examples, in an attempt to answer the first question above. A particular class of new unsaturated examples is given by lexicographic products of structures, in \Cref{lexsec}.

    In \Cref{mainthmsec} we answer the second question. It is well-known that each Stone space $S_n(M)$ of $n$-types with parameters from $M$ gives an $\Aut(M)$-flow by acting via translation of parameters (\Cref{typeflow}). We say that a structure $M$ has the \emph{fixed points on type spaces property} (FPT) if, for all $n \geq 1$, every subflow of $S_n(M)$ has a fixed point. We then have the following theorem:

    \begin{numthm}{\ref{FPTiffEDRP}}
        Let $M$ be an ultrahomogeneous structure.
			
        Then:
        \begin{enumerate}
            \item if $M$ has FPT, then $M$ has the externally definable Ramsey property;
            \item if $\Age(M)$ is countable, then $M$ has the externally definable Ramsey property iff $M$ has FPT.
        \end{enumerate}
    \end{numthm}

    This provides an analogue to the fundamental result of the KPT correspondence, \cite[Theorem 4.5]{KPT05}. In the case that $M$ is $\aleph_0$-saturated, \Cref{FPTiffEDRP} follows from \cite[Theorem 3.14]{KLM} (personal communication, K.\ Krupiński). The novelty here is that \Cref{FPTiffEDRP} applies to all ultrahomogeneous $M$ with countable age -- in particular, all \Fr structures, including those that are unsaturated. The dynamical characterisation of the externally definable Ramsey property as FPT is also new.

    The classification programme initiated by \Jarik (\cite{Nes05}) of ultrahomogeneous structures with the Ramsey property may also be interesting in the context of external definability, as shown by the examples in \Cref{exsec}. We refer the reader to \Cref{furtherqnssec} for suggestions of further research directions.

    The authors would also like to note that in \cite{NVT17}, certain definable Ramsey properties for \Fr structures are also considered from a structural Ramsey perspective, and one particular definable Ramsey property is shown to be equivalent to having fixed points on every subflow of the Roelcke compactification of the automorphism group of the structure (see \cite[Definition 1.3 and Theorem 1.8]{NVT17}). The relationship of these results to the results of the present paper is currently unclear (it would be interesting to know if there is any overlap in the $\aleph_0$-categorical case).

    \textbf{Acknowledgements.} The authors would like to thank Krzysztof Krupiński for explicitly pointing out the many connections between this paper and \cite{KLM}. This project began from the perspective of investigating FPT, but the authors believe that the idea of considering definable Ramsey properties (in particular the definition of an externally definable colouring) as found in \cite{KLM} was undoubtedly a significant and helpful influence when proving the main result of this paper, \Cref{FPTiffEDRP}. The second author would also like to thank David Evans, his PhD supervisor, for numerous productive discussions regarding FPT during his PhD (see \cite{Sul23} for the paper that resulted from these discussions, and Chapter 3 of the PhD thesis \cite{Sul22} of the second author).

\section{Background and notation}

    In this section, we review background material on \Fr theory, Ramsey theory and the KPT correspondence. The treatment here is very brief: for further details, see the references throughout.
    
    In this paper, all first-order languages considered are assumed to be countable and relational.

\subsection{Ultrahomogeneity; \Fr theory}

    We assume the reader is familiar with the classical \Fr theory, a clear exposition of which may be found in \cite[Chapter 7]{Hod93}, and here we simply recall \Frn's theorem while establishing some terminology and notation.
    
    \begin{defn}
        Let $\mc{K}$ be a class of $\mc{L}$-structures. We say that $\mc{K}$ is \emph{countable (up to isomorphism)} if the structures in $\mc{K}$ lie in countably many isomorphism classes. We will usually mildly abuse terminology and just say that $\mc{K}$ is \emph{countable}.
    \end{defn}

    \begin{defn}
        We call a countably infinite ultrahomogeneous structure $M$ a \emph{\Fr structure}.
        
        Let $\mc{K}$ be a class of finite $\mc{L}$-structures with the hereditary property (HP), joint embedding property (JEP) and amalgamation property (AP), and assume additionally that:
        \begin{itemize}
            \item $\mc{K}$ is countable;
            \item $\mc{K}$ contains structures of arbitrarily large finite size.
        \end{itemize}
        Then we call $\mc{K}$ a \emph{\Fr class} or \emph{amalgamation class}.
    \end{defn}

    \begin{thm}[\Frn, \cite{Fra54}]
        We have that:
        \begin{itemize}
            \item if $M$ is an ultrahomogeneous structure, then $M$ has the extension property (EP);
            \item if $M$ is countable and has the extension property, then $M$ is ultrahomogeneous;
            \item if $M$ is a \Fr structure, then $\Age(M)$ is a \Fr class;
            \item if $\mc{K}$ is a \Fr class, then there exists a \Fr structure $M$ with $\Age(M) = \mc{K}$, and such an $M$ is unique up to isomorphism.
        \end{itemize}
    \end{thm}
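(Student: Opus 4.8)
The plan is to establish the four bullet points in turn; throughout I use that the language is relational, so every substructure is determined by its underlying set and ``finitely generated'' just means ``finite'', whence one-element extensions of finite substructures are again finite substructures. The first two bullets are ``local'' and proved by short direct arguments; the last two are ``global'' and proved by a construction plus back-and-forth.

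For the first bullet, suppose $M$ is ultrahomogeneous, let $A \subseteq B$ be finite with $B \in \Age(M)$, and let $f : A \hookrightarrow M$ be an embedding. Pick any embedding $h : B \hookrightarrow M$ (one exists since $B \in \Age(M)$). Then $f$ and $h|_A$ are two embeddings of $A$ into $M$, so $f \circ (h|_A)^{-1}$ is an isomorphism between the finite substructures $h(A)$ and $f(A)$ of $M$; by ultrahomogeneity it extends to some $\sigma \in \Aut(M)$, and $\sigma \circ h$ is then an embedding of $B$ into $M$ extending $f$. For the second bullet, suppose $M$ is countable with the extension property, enumerate $M = \{m_i : i \in \N\}$, and run a back-and-forth starting from a given finite partial isomorphism between substructures of $M$. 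At stage $i$ we have a finite partial isomorphism $g : A \to B$ ($A,B \subseteq M$) and wish to force $m_i$ into its domain (odd stages handle the range, symmetrically via $g^{-1}$). Let $A^{+}$ be the substructure on $A \cup \{m_i\}$ and transport its structure along $g$ to a structure $B^{+}$ on $B \cup \{\ast\}$ for which $g$ extends to an isomorphism $A^{+} \cong B^{+}$; since $B^{+} \in \Age(M)$, applying the extension property to $B \subseteq B^{+}$ together with the inclusion $B \hookrightarrow M$ gives an embedding realizing $B^{+}$ over $B$ inside $M$, and composing yields the desired extension of $g$. The union over all stages is an automorphism extending the original map.

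For the third bullet, HP and the presence of arbitrarily large finite members are immediate, and countability of $\Age(M)$ follows from $|M| = \aleph_0$. For JEP, embed $B_1, B_2 \in \Age(M)$ into $M$ and take the union of their (finite) images. For AP, given embeddings $A \hookrightarrow B_1$ and $A \hookrightarrow B_2$ in $\Age(M)$: embed $B_1 \hookrightarrow M$, which pins down a copy of $A$ in $M$; embed $B_2 \hookrightarrow M$ arbitrarily; compare the two embeddings of $A$ and use ultrahomogeneity to conjugate the second one so that the two copies of $A$ in $M$ coincide; the union of the resulting images of $B_1$ and $B_2$ is then a finite substructure of $M$, hence in $\Age(M)$, and it amalgamates $B_1$ and $B_2$ over $A$. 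For the fourth bullet, build $M$ as the union of a chain $A_0 \subseteq A_1 \subseteq \cdots$ of members of $\mc{K}$ by dovetailing two families of tasks: using JEP, arrange that every isomorphism type of $\mc{K}$ embeds into some $A_n$; and using AP, arrange that for every $n$, every $A \hookrightarrow A_n$ with $A \subseteq B$ in $\mc{K}$, the embedding extends to $B \hookrightarrow A_m$ for some $m > n$. There are only countably many such tasks (each $A_n$ is finite and $\mc{K}$ is countable), so they can be enumerated and discharged one by one. Then $\Age(M) = \mc{K}$ (the inclusion $\subseteq$ is HP applied inside the $A_n$, the inclusion $\supseteq$ is the first task family), $M$ is countably infinite, and the second task family is precisely the extension property, so by the second bullet $M$ is ultrahomogeneous. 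For uniqueness, given two \Fr structures $M, N$ with $\Age(M) = \mc{K} = \Age(N)$, run a back-and-forth between them: the extension step from the second bullet applies verbatim, now using the extension property of $M$ and of $N$ (from the first bullet) and the identification of the two ages, and the union of the chain of finite partial isomorphisms is an isomorphism $M \cong N$.

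The only genuinely delicate point is the bookkeeping in the existence construction: one must set up a single enumeration of the ``one-element extension'' tasks that keeps pace with the growing chain while simultaneously realizing every member of $\mc{K}$, and then verify that the resulting union has age \emph{exactly} $\mc{K}$ rather than something larger — this last verification is where HP is used. The three back-and-forth arguments, though they recur, are routine once the transport-of-structure bookkeeping is in place, and each relies on the relational hypothesis so that adjoining a single element to a finite substructure again yields a finite substructure.
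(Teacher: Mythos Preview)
The paper does not give its own proof of this theorem: it is stated as background material with a citation to Fra\"{i}ss\'{e}'s original paper, and the reader is referred to \cite[Chapter 7]{Hod93} for details. Your argument is the standard textbook proof (essentially the one found in Hodges) and is correct; there is nothing to compare it against in the paper itself.
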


\subsection{Quantifier elimination and \texorpdfstring{$\aleph_0$}{0א}-categoricity}

    We briefly review some background on quantifier elimination and $\aleph_0$-categoricity in the context of ultrahomogeneous structures. We assume the reader is familiar with the Ryll-Nardzewski theorem (see, for example, \cite[Theorem 4.4.1]{Mar02}).
  
    \begin{lem} \label{finLomega}
        Let $\mc{L}$ be a finite relational language, and let $M$ be a \Fr $\mc{L}$-structure. Then $\Th(M)$ is $\aleph_0$-categorical. 
    \end{lem}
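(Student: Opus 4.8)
The plan is to invoke the Ryll-Nardzewski theorem in its oligomorphic form: if $N$ is a countably infinite structure such that $\Aut(N)$ has only finitely many orbits on $N^n$ for every $n \geq 1$, then $\Th(N)$ is $\aleph_0$-categorical (this is part of the Ryll-Nardzewski/Engeler/Svenonius theorem; see, e.g., \cite[Chapter 7]{Hod93}). Since a \Fr structure is by definition countably infinite, it suffices to bound, for each $n$, the number of $\Aut(M)$-orbits on $M^n$.

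The main step is to identify the $\Aut(M)$-orbits on $M^n$ with the quantifier-free $n$-types realised in $M$. One inclusion is immediate, as automorphisms preserve quantifier-free formulas; the other is where ultrahomogeneity is used. If $\bar a = (a_1, \dots, a_n)$ and $\bar b = (b_1, \dots, b_n)$ satisfy $\qftp^M(\bar a) = \qftp^M(\bar b)$, then the map $a_i \mapsto b_i$ is well defined (equalities among coordinates are recorded in the quantifier-free type) and is an isomorphism between the finite substructures of $M$ induced on $\bar a$ and on $\bar b$ -- finite because $\mc{L}$ is relational -- so by ultrahomogeneity it extends to an automorphism of $M$ sending $\bar a$ to $\bar b$. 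Hence the number of orbits of $\Aut(M)$ on $M^n$ is at most the number of quantifier-free $n$-types over $\emptyset$.

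It then remains to note that, since $\mc{L}$ is finite and relational, there are only finitely many atomic $\mc{L}$-formulas in the variables $x_1, \dots, x_n$: there are finitely many relation symbols, each of finite arity, only finitely many ways of substituting variables into each, and finitely many equalities $x_i = x_j$. A quantifier-free $n$-type is determined by the truth values it assigns to these finitely many atomic formulas, so there are only finitely many of them; combined with the previous paragraph this shows $\Aut(M)$ is oligomorphic, and the Ryll-Nardzewski theorem gives that $\Th(M)$ is $\aleph_0$-categorical. I do not foresee a real obstacle along this route: the only point needing care is the identification of orbits with quantifier-free types, which genuinely relies on ultrahomogeneity (being the \Fr limit of some amalgamation class would not by itself yield the nontrivial inclusion), and one should be aware that the actual content is packaged by using the oligomorphic form of the Ryll-Nardzewski theorem rather than the ``$S_n$ finite'' form -- if one prefers the latter, one must additionally argue that $M$ is $\aleph_0$-saturated (equivalently, that $\Th(M)$ eliminates quantifiers), which is no longer immediate.
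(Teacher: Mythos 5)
Your proof is correct and follows the same route the paper indicates: it deduces the result from the Ryll-Nardzewski theorem (in its oligomorphic form), using ultrahomogeneity to identify $\Aut(M)$-orbits on $M^n$ with quantifier-free $n$-types and finiteness of the relational language to bound these. The paper simply states that the lemma ``follows immediately from the Ryll-Nardzewski theorem'' (citing a direct proof elsewhere), so your write-up is just the expected expansion of that remark.
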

    This result follows immediately from the Ryll-Nardzewski theorem, but can also be proved directly (see \cite[Proposition 1.10]{Eva13}).
    
    \begin{lem} \label{omegahomogiffQE}
        Let $\mc{L}$ be a countable relational language, and let $M$ be a countably infinite $\mc{L}$-structure with $\aleph_0$-categorical theory $\Th(M)$. Then $M$ is ultrahomogeneous iff $\Th(M)$ has QE.
    \end{lem}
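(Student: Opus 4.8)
The plan is to prove the two implications separately, leaning in both directions on the standard fact that a countable $\aleph_0$-categorical structure is $\aleph_0$-saturated, hence strongly $\aleph_0$-homogeneous (this can itself be seen via a back-and-forth, using that every type over a finite subset is isolated by the \RN theorem and that isolated types are realised).

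For ``QE $\Rightarrow$ ultrahomogeneous'': I would take an isomorphism $f \colon A \to B$ between finite substructures of $M$. Since $\mc{L}$ is relational, $f$ preserves and reflects all quantifier-free formulas, so by QE it is a partial elementary map of $M$; strong $\aleph_0$-homogeneity then extends $f$ to an automorphism of $M$, giving ultrahomogeneity. (One could equally run a direct back-and-forth and avoid invoking $\aleph_0$-saturation by name, using at each step that $\tp(a/A)$ is isolated by the \RN theorem.)

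For ``ultrahomogeneous $\Rightarrow$ QE'', fix $n$. The first and key step is to note that ultrahomogeneity makes the quantifier-free type of an $n$-tuple in $M$ determine its complete type: if $\bar a, \bar b \in M^n$ have $\qftp(\bar a) = \qftp(\bar b)$, then $a_i \mapsto b_i$ is a well-defined isomorphism of the induced finite substructures (both well-definedness and relation-preservation being read directly off the common quantifier-free type), hence extends to some $\sigma \in \Aut(M)$ with $\sigma(\bar a) = \bar b$, so $\tp(\bar a) = \tp(\bar b)$. By the \RN theorem there are only finitely many complete $n$-types, hence only finitely many quantifier-free $n$-types $q_1, \dots, q_m$ realised in $M$; I would then check that each corresponding class $Q_j = \{\bar a \in M^n : \qftp(\bar a) = q_j\}$ is quantifier-free definable, by choosing for each $j' \neq j$ a quantifier-free $\theta_{j,j'} \in q_j$ with $\neg\theta_{j,j'} \in q_{j'}$ and putting $\chi_j := \bigwedge_{j' \neq j} \theta_{j,j'}$, so that $\bar a \models \chi_j$ holds exactly when $\qftp(\bar a) = q_j$. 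Combining this with the first step, $\chi_j$ also isolates the complete $n$-type refining $q_j$ and defines the associated $\Aut(M)$-orbit. Finally, for an arbitrary formula $\phi(\bar x)$ with $|\bar x| = n$, its solution set in $M$ is $\Aut(M)$-invariant, hence a union $\bigcup_{j \in S} Q_j$, so $M \models \forall \bar x\,\bigl(\phi(\bar x) \leftrightarrow \bigvee_{j \in S} \chi_j(\bar x)\bigr)$; since $\Th(M)$ is complete this sentence lies in $\Th(M)$, which establishes QE (sentences being covered by completeness of $\Th(M)$).

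I expect the only real obstacle to be the bookkeeping in the forward direction: keeping straight the coincidence of complete-type classes, $\Aut(M)$-orbits, and quantifier-free-type classes on $M^n$, and extracting a single quantifier-free formula for each class from the finiteness supplied by the \RN theorem. The rest is a routine combination of the \RN theorem, strong $\aleph_0$-homogeneity, and the definition of QE.
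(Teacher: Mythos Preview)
Your argument is correct and is the standard proof via the \RN theorem. The paper does not actually supply its own proof of this lemma: it simply remarks that the result is a corollary of the \RN theorem and cites \cite[Theorem 1.15]{Eva13} for details. Your proposal is precisely the kind of argument one finds in such a reference, so there is nothing to compare beyond noting that you have filled in what the paper leaves to citation.
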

    This is a corollary of the Ryll-Nardzewski theorem. A proof can be found in \cite[Theorem 1.15]{Eva13}. By combining \Cref{finLomega,omegahomogiffQE}, we obtain the following:
    
    \begin{lem} \label{finLQE}
        Let $\mc{L}$ be a finite relational language, and let $M$ be a \Fr $\mc{L}$-structure. Then $\Th(M)$ has QE.
    \end{lem}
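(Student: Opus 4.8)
The plan is to simply chain together the two lemmas just established, \Cref{finLomega} and \Cref{omegahomogiffQE}, since the hypothesis that $\mc{L}$ is finite and relational and that $M$ is \Fr puts us squarely in the intersection of their hypotheses. First I would record the relevant structural facts about $M$: being a \Fr structure, $M$ is by definition countably infinite and ultrahomogeneous, and it is an $\mc{L}$-structure for our fixed finite relational language $\mc{L}$.

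Next I would invoke \Cref{finLomega}: since $\mc{L}$ is finite relational and $M$ is \Fr, the theory $\Th(M)$ is $\aleph_0$-categorical. This is the only place the finiteness of the language is used — it guarantees, for each $n$, only finitely many quantifier-free $n$-types, and ultrahomogeneity (equivalently, the extension property from \Frthm) collapses complete $n$-types to quantifier-free ones, so \RN applies.

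Finally I would apply \Cref{omegahomogiffQE} with this same $M$: we now know $M$ is countably infinite, $\Th(M)$ is $\aleph_0$-categorical, and $M$ is ultrahomogeneous, so the right-to-left... rather, the direction of the biconditional from ultrahomogeneity to QE yields that $\Th(M)$ has QE, which is exactly the claim. There is no real obstacle here: the only thing to be careful about is citing the lemmas with the correct orientation of \Cref{omegahomogiffQE} (we are given ultrahomogeneity and want QE), and noting explicitly that the finiteness of $\mc{L}$ is what licenses the appeal to \Cref{finLomega} — for a merely countable relational language one would instead need to assume $\aleph_0$-categoricity as a hypothesis, as in \Cref{omegahomogiffQE}.
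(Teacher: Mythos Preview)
Your proposal is correct and matches the paper's approach exactly: the paper simply states that the lemma follows by combining \Cref{finLomega} and \Cref{omegahomogiffQE}, which is precisely the chaining you carry out.
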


\subsection{Ramsey theory; indivisibility}
  
    \begin{notn}
        Let $A, B$ be $\mc{L}$-structures, where $A$ is finite, and let $\bar{a} = (a_0, \cdots, a_{l-1})$ be an ordering of the elements of $A$. If $A \sub B$, we will slightly abuse notation and often write $\bar{a} \sub B$.
			
        Let $\mc{K}$ be a class of $\mc{L}$-structures. We will often write $\bar{a} \in \mc{K}$ to mean a structure $A \in \mc{K}$ with an associated ordering $\bar{a}$ of its elements.
    \end{notn}
		
    \begin{notn}
        Let $A, B$ be $\mc{L}$-structures, where $A$ is finite, and let $\bar{a} = (a_0, \cdots, a_{l-1})$ be an ordering of the elements of $A$. We write $\binom{B}{\bar{a}}$ to denote the set \[\binom{B}{\bar{a}} = \{f(\bar{a}) \sub B : f : A \to B \text{ an embedding}\}. \]
			
        The particular ordering on $A$ is unimportant - it is merely a convenient way to distinguish embeddings $A \to B$.
    \end{notn}
		
    \begin{notn}
        Let $A, B$ be $\mc{L}$-structures. We write $\binom{B}{A}$ to denote the set \[\binom{B}{A} = \{A' \sub B : A' \cong A\},\] the set of isomorphic copies of $A$ in $B$. 
    \end{notn}

    \begin{defn}
        Let $k \in \N_+$. Let $A, B$ be $\mc{L}$ structures. A \emph{$k$-colouring} of $\binom{B}{A}$ is a function $\chi : \binom{B}{A} \to k$.

        Let $S \sub B$. We say that $S$ is \emph{monochromatic} in $\chi$ if $\chi$ is constant on the set $\binom{S}{A}$.

        Let $\bar{a}$ be an ordering of the elements of $A$. A \emph{$k$-colouring} of $\binom{B}{\bar{a}}$ is a function $\chi : \binom{B}{\bar{a}} \to k$. The definition of monochromatic subsets is analogous.
    \end{defn}

    \begin{notn}
        Let $k \in \N_+$. Let $A, B, C$ be $\mc{L}$-structures, and let $\bar{a}$ be an ordering of the elements of $A$. If for any $k$-colouring $\chi : \binom{C}{\bar{a}} \to k$ there exists a $\chi$-monochromatic copy $B' \in \binom{C}{B}$ of $B$, then we write $C \to (B)^{\bar{a}}_k$. This is called \emph{Erd\H{o}s-Rado arrow notation}.
    \end{notn}

    \begin{defn} \label{RP def}
        Let $\mc{K}$ be a class of finite $\mc{L}$-structures which is closed under isomorphisms.

        Let $\bar{a} \in \mc{K}$. We say that $\mc{K}$ has the \emph{$\bar{a}$-Ramsey property ($\bar{a}$-RP)} if for all $k \in \N_+$, for all $B \in \mc{K}$, there exists $C \in \mc{K}$ with $C \to (B)^{\bar{a}}_k$. 
        
        We say $\mc{K}$ has the \emph{point-Ramsey property} or \emph{vertex-Ramsey property} if $\mc{K}$ has the $\bar{a}$-RP for all $\bar{a} \in \mc{K}$ with $|\bar{a}| = 1$. (Usually we will only use this terminology when all structures in $\mc{K}$ of size $1$ are isomorphic.) 
        
        If $\mc{K}$ has the $\bar{a}$-Ramsey property for all $\bar{a} \in \mc{K}$, then we say that $\mc{K}$ has the \emph{Ramsey property}. 
        (This is also referred to as the \emph{embedding Ramsey property}.)
    \end{defn}

    We now discuss a number of basic lemmas regarding the $\bar{a}$-Ramsey property. We will later prove analogues of these lemmas for the externally definable $\bar{a}$-Ramsey property.

    It suffices to check the $\bar{a}$-Ramsey property for colourings with only two colours (the proof is via a simple induction - see \cite[Lemma 2.5]{Bod15}):
    
    \begin{lem}
        Let $\mc{K}$ be a class of finite $\mc{L}$-structures which is closed under isomorphisms. Let $\bar{a} \in \mc{K}$.

        If for all $B \in \mc{K}$ there exists $C \in \mc{K}$ with $C \to (B)^{\bar{a}}_2$, then $\mc{K}$ has the $\bar{a}$-RP.
    \end{lem}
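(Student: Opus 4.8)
The plan is to induct on the number of colours $k$. The base cases $k = 1$ (take $C = B$) and $k = 2$ (the hypothesis) are immediate, so suppose the $\bar{a}$-Ramsey property holds for $k$-colourings and let $B \in \mc{K}$ be given. First I would apply the inductive hypothesis to obtain $C_0 \in \mc{K}$ with $C_0 \to (B)^{\bar{a}}_k$; note this forces $\binom{C_0}{B} \neq \emptyset$, since otherwise no monochromatic copy of $B$ could exist. Then I would apply the two-colour hypothesis to $C_0$ to obtain $C \in \mc{K}$ with $C \to (C_0)^{\bar{a}}_2$. I claim that $C \to (B)^{\bar{a}}_{k+1}$, which completes the induction.

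To prove the claim, let $\chi : \binom{C}{\bar{a}} \to \{0, 1, \dots, k\}$ be a $(k+1)$-colouring. Define a $2$-colouring $\chi' : \binom{C}{\bar{a}} \to 2$ by setting $\chi'(x) = 0$ if $\chi(x) = 0$ and $\chi'(x) = 1$ otherwise, i.e.\ collapsing the colours $1, \dots, k$ to a single colour. Since $C \to (C_0)^{\bar{a}}_2$, there is a copy $C_0' \in \binom{C}{C_0}$ that is $\chi'$-monochromatic; as $C_0' \cong C_0$ and the arrow relation is plainly invariant under isomorphism, we have $C_0' \to (B)^{\bar{a}}_k$ and $\binom{C_0'}{B} \neq \emptyset$. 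If $C_0'$ has $\chi'$-colour $0$, then $\chi$ is constantly $0$ on $\binom{C_0'}{\bar{a}}$, so any copy of $B$ inside $C_0'$ is $\chi$-monochromatic, and such a copy exists. If $C_0'$ has $\chi'$-colour $1$, then $\chi$ restricted to $\binom{C_0'}{\bar{a}}$ takes values in the $k$-element set $\{1, \dots, k\}$, and applying $C_0' \to (B)^{\bar{a}}_k$ to this restricted colouring yields a $\chi$-monochromatic copy of $B$ in $C_0'$. In either case we obtain the desired monochromatic copy.

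There is no real obstacle here: the only points requiring a moment's care are that $C_0$ and $C$ remain in $\mc{K}$ (which holds since both the inductive hypothesis and the two-colour hypothesis produce members of $\mc{K}$), that the arrow relation is isomorphism-invariant, and that $B$ genuinely embeds into $C_0$ (hence into $C_0'$), which is automatic from $C_0 \to (B)^{\bar{a}}_k$.
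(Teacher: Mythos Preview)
Your argument is correct and is precisely the standard induction on $k$ that the paper alludes to (the paper does not spell out the proof but refers to \cite[Lemma 2.5]{Bod15}). The two-step construction---first a witness $C_0$ for $k$ colours, then a witness $C$ for $2$ colours with $C_0$ in the role of $B$---together with the colour-merging trick is exactly the intended proof.
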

    
    An analogue of the above lemma will hold for the externally definable Ramsey property: this is \Cref{EDRPkiff2}.
    
    The following result (from \cite{Nes89}) indicates the connection between the (full, unrestricted) Ramsey property and ultrahomogeneous structures:
    
    \begin{lem} \label{RPimpliesAP}
        Let $\mc{K}$ be a class of finite structures which is closed under isomorphisms and has JEP and the Ramsey property. Then $\mc{K}$ has the amalgamation property.
    \end{lem}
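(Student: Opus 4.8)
The plan is to deduce amalgamation from a single application of the Ramsey property, using JEP to build a ``container'' structure $D$ and choosing a colouring on copies of $A$ that records whether a given copy extends to a copy of $B_1$.

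So suppose we are given $A, B_1, B_2 \in \mc{K}$ and embeddings $f_1 : A \to B_1$, $f_2 : A \to B_2$, and fix an enumeration $\bar{a}$ of $A$ (note $A \in \mc{K}$, so $\bar a \in \mc{K}$). First I would apply JEP to obtain $D \in \mc{K}$ together with embeddings $u_1 : B_1 \to D$ and $u_2 : B_2 \to D$; inside $D$ we then have two, a priori unrelated, copies $u_1(f_1(A))$ and $u_2(f_2(A))$ of $A$. Next, apply the Ramsey property (with $2$ colours) to $\bar a$ and $D$ to obtain $C \in \mc{K}$ with $C \to (D)^{\bar a}_2$. The crux is the choice of colouring: define $\chi : \binom{C}{\bar a} \to 2$ by setting $\chi(\bar c) = 0$ if there is an embedding $\beta : B_1 \to C$ with $\beta(f_1(\bar a)) = \bar c$, and $\chi(\bar c) = 1$ otherwise.

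Then I would take a $\chi$-monochromatic copy $D' \in \binom{C}{D}$ and fix an embedding $\iota : D \to C$ with image $D'$. The map $\iota \circ u_1 : B_1 \to C$ witnesses that $(\iota \circ u_1 \circ f_1)(\bar a) \in \binom{D'}{\bar a}$ has colour $0$, so $\chi$ is constantly $0$ on $\binom{D'}{\bar a}$. Since $(\iota \circ u_2 \circ f_2)(\bar a)$ also lies in $\binom{D'}{\bar a}$, it likewise has colour $0$, so there is an embedding $\gamma : B_1 \to C$ with $\gamma(f_1(\bar a)) = (\iota \circ u_2 \circ f_2)(\bar a)$. Because $\bar a$ enumerates all of $A$, the two embeddings $\gamma \circ f_1$ and $(\iota \circ u_2) \circ f_2$ of $A$ into $C$ then coincide, so $(C,\, \gamma,\, \iota \circ u_2)$ is an amalgam of $B_1$ and $B_2$ over $A$, as required.

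I expect the only genuinely non-bookkeeping point to be finding and justifying this colouring, together with the discipline of working with \emph{enumerated} copies --- elements of $\binom{\,\cdot\,}{\bar a}$ rather than $\binom{\,\cdot\,}{A}$ --- so that ``$\bar c$ extends to a copy of $B_1$'' and ``$\bar c$ is the $A$-part of $\iota \circ u_2$'' refer to compatible maps $A \to C$; that compatibility is exactly what forces $\gamma \circ f_1 = (\iota \circ u_2) \circ f_2$ at the end. The remaining ingredients --- that two colours suffice, and that $C$ genuinely contains a copy of $D$ (automatic from $C \to (D)^{\bar a}_2$) --- are routine.
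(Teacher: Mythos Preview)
The paper does not supply its own proof of this lemma; it simply attributes the result to \cite{Nes89}. Your argument is correct and is precisely the standard proof one finds in the literature (e.g.\ in Ne\v{s}et\v{r}il's original paper and in later surveys): use JEP to place $B_1$ and $B_2$ in a common $D$, apply the Ramsey property to $\bar a$ and $D$, and colour enumerated copies of $A$ by whether they extend to a copy of $B_1$.
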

    
    It is therefore usual to study the Ramsey property in the context of \Fr structures.
    
    \begin{defn}
        Let $M$ be a structure. Let $\bar{a} \in \Age(M)$. If for all $k \in \N_+$, for all $B \in \Age(M)$, we have that $M \to (B)^{\bar{a}}_k$, then we say that $M$ has the \emph{$\bar{a}$-Ramsey property}.
        
        As in \Cref{RP def}, we say that $M$ has the \emph{point-Ramsey property} if $M$ has the $\bar{a}$-Ramsey property for all $\bar{a} \in \Age(M)$ with $|\bar{a}| = 1$, and we say that $M$ has the \emph{Ramsey property} if $M$ has the $\bar{a}$-Ramsey property for all $\bar{a} \in \Age(M)$. 
    \end{defn}
    
    The following lemma shows that to check the Ramsey property on a \Fr structure, it is equivalent to check the Ramsey property on its age. This is usually referred to as the ``compactness principle" (see \cite[Section 1.5]{GRS90} or \cite[Proposition 2.8]{Bod15}).
    
    \begin{lem} \label{classicalRamseyiffage}
        Let $M$ be a \Fr structure. Let $k \in \N_+$. Let $\bar{a} \in \Age(M)$, $B \in \Age(M)$. Then $M \to (B)^{\bar{a}}_k$ iff there exists $C \in \Age(M)$ with $C \to (B)^{\bar{a}}_k$. 
    \end{lem}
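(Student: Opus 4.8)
The plan is to prove the two implications separately: the right-to-left direction is immediate, while the left-to-right direction is the classical ``compactness principle'' argument, run over an exhausting chain of finite substructures of $M$.

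For ($\Leftarrow$), suppose $C \in \Age(M)$ with $C \to (B)^{\bar a}_k$, and fix an embedding $g \colon C \to M$, with image $C' \sub M$. Given any $k$-colouring $\chi \colon \binom{M}{\bar a} \to k$, pull it back along $g$ to a $k$-colouring $\chi'$ of $\binom{C}{\bar a}$ by $\chi'(\bar t) = \chi(g \cdot \bar t)$ (applying $g$ coordinatewise); since $g$ is an embedding this is well defined, and since $g$ is an isomorphism onto $C'$ it carries $\binom{C}{\bar a}$ onto $\binom{C'}{\bar a}$ and $\binom{C}{B}$ onto $\binom{C'}{B}$ compatibly with colours. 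By hypothesis $\chi'$ has a monochromatic copy $B''$ of $B$ in $C$, and then $g[B'']$ is a $\chi$-monochromatic copy of $B$ inside $M$. Hence $M \to (B)^{\bar a}_k$.

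For ($\Rightarrow$) I would argue by contraposition: assume no $C \in \Age(M)$ satisfies $C \to (B)^{\bar a}_k$, and build a $k$-colouring of $\binom{M}{\bar a}$ with no monochromatic copy of $B$. Since the language is relational, $M$ is the increasing union of a chain $(M_n)_{n \in \N}$ of finite substructures, each of which lies in $\Age(M)$; by assumption each satisfies $M_n \not\to (B)^{\bar a}_k$, so there is a ``bad'' colouring $\chi_n \colon \binom{M_n}{\bar a} \to k$ admitting no monochromatic copy of $B$. Now run a standard compactness argument. One clean way: for $B'$ ranging over $\binom{M}{B}$, the sets $U_{B'} = \{\chi \in k^{\binom{M}{\bar a}} : \chi \text{ is not constant on } \binom{B'}{\bar a}\}$ are clopen in the compact space $k^{\binom{M}{\bar a}}$, and they have the finite intersection property: any finitely many of the relevant copies $B'$ lie inside a common $M_n$, and $\chi_n$ (extended arbitrarily to $\binom{M}{\bar a}$) witnesses membership in the corresponding finite intersection. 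Hence $\bigcap_{B'} U_{B'} \neq \emptyset$, and any point of it is a colouring of $\binom{M}{\bar a}$ with no monochromatic copy of $B$ -- contradicting $M \to (B)^{\bar a}_k$. (Equivalently, one can apply K\"{o}nig's lemma to the finitely branching tree whose level-$n$ nodes are the bad colourings of $\binom{M_n}{\bar a}$, ordered by restriction, each level being nonempty by the above.)

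There is no serious obstacle here; the points requiring care are purely bookkeeping, and all follow from the fact that in a relational language a substructure is determined by its underlying set: that $\binom{M}{\bar a} = \bigcup_n \binom{M_n}{\bar a}$; that every (necessarily finite) copy of $B$ inside $M$ is a substructure of some $M_n$, so that the hypothesis $M_n \not\to (B)^{\bar a}_k$ genuinely applies to it; and that restricting a bad colouring of $\binom{M_n}{\bar a}$ to the sub-substructure $M_m$ (for $m < n$) remains bad.
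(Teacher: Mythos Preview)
Your proof is correct and is exactly the standard ``compactness principle'' argument that the paper cites (rather than proves) for this lemma; indeed, your topological compactness argument in $k^{\binom{M}{\bar a}}$ is the same device the paper itself employs in the proof of the analogous \Cref{Ramseyforage}. One minor stylistic point: you argue by contraposition, so the final clause ``contradicting $M \to (B)^{\bar a}_k$'' should simply read ``hence $M \not\to (B)^{\bar a}_k$''.
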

    
    The analogue of the above lemma for the externally definable Ramsey property, \Cref{Ramseyforage}, will present significant differences.
    
    We now consider a number of indivisibility properties (see \cite{ES91}), which will be useful in proving the point-Ramsey property for particular examples.
    
    \begin{defn} \label{pigeonindivdef}
        Let $M$ be a structure.
        \begin{itemize}
            \item If for every partition $M = M_1 \cup M_2$ of the vertices of $M$, we have $M_1 \cong M$ or $M_2 \cong M$, we say that $M$ has the \emph{pigeonhole property} (following \cite{Cam97}).
            \item If for every partition $M = M_1 \cup M_2$ of the vertices of $M$, we have that $M_1$ contains an isomorphic copy of $M$ or $M_2$ contains an isomorphic copy of $M$, we say that $M$ is \emph{indivisible}.
            \item If for every partition $M = M_1 \cup M_2$ of the vertices of $M$, we have $\Age(M) \sub \Age(M_1)$ or $\Age(M) \sub \Age(M_2)$, we say that $M$ is \emph{age-indivisible}. 
        \end{itemize}
    \end{defn}
    
    The proof of the following lemma is immediate.
    
    \begin{lem} \label{pigeonindivlem}
        Let $M$ be a structure.
        \begin{enumerate}
            \item If $M$ has the pigeonhole property, then $M$ is indivisible.
            \item If $M$ is indivisible, then $M$ is age-indivisible.
        \end{enumerate}
    \end{lem}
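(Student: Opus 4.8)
The plan is to unwind the definitions in \Cref{pigeonindivdef} directly; as the paper indicates, both implications are immediate, each requiring only a one-line argument once one is careful about the convention that partitions are of the vertex set and substructures carry the induced structure.

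For (1), I would begin with an arbitrary partition $M = M_1 \cup M_2$ of the vertices of $M$. The pigeonhole property gives $M_1 \cong M$ or $M_2 \cong M$; without loss of generality suppose $M_1 \cong M$, where (by the convention just mentioned) $M_1$ denotes the substructure of $M$ induced on that vertex set. Then $M_1$ is itself a substructure of $M$ isomorphic to $M$, so $M_1$ contains an isomorphic copy of $M$, and the defining condition for indivisibility is satisfied. No genuine obstacle arises here.

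For (2), I would again take an arbitrary partition $M = M_1 \cup M_2$. Indivisibility yields, without loss of generality, a substructure $M' \sub M_1$ with $M' \cong M$. Since the age of a structure is monotone under passing to substructures and invariant under isomorphism, we get $\Age(M) = \Age(M') \sub \Age(M_1)$, which is exactly the statement that $M$ is age-indivisible.

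Since both steps are purely definitional, there is no substantive obstacle: the only point requiring any care is bookkeeping — namely that in \Cref{pigeonindivdef} the sets $M_1, M_2$ are vertex sets equipped with the induced structure, so that $M_i \cong M$ already witnesses that $M_i$ contains a copy of $M$, and that $\Age(\cdot)$ behaves well under both substructures and isomorphism.
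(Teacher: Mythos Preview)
Your proposal is correct and matches the paper's approach: the paper simply states that the proof is immediate and gives no further argument, and your unwinding of the definitions is exactly the intended one-line verification in each case.
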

    The following lemma is folklore.
    
    \begin{lem} \label{ageindivpointRamsey}
        Let $M$ be a countable transitive structure (i.e.\ $\Aut(M) \curvearrowright M$ is transitive). Then $M$ is age-indivisible iff $M$ has the point-Ramsey property.
    \end{lem}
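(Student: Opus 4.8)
The plan is to prove, for a transitive $M$, a short chain of equivalences --- point-Ramsey for $M$ $\iff$ its two-colour fragment $\iff$ age-indivisibility --- with all of the content concentrated in the first equivalence. For the setup, observe that since $\Aut(M)\curvearrowright M$ is transitive, all one-element substructures of $M$ are isomorphic; fix such a structure $A$, with its (trivial) ordering $\bar{a}$, so that $A\in\Age(M)$ and $\binom{M}{\bar{a}}$ is precisely the set of singletons of $M$. Thus a $k$-colouring of $\binom{M}{\bar{a}}$ is the same thing as a partition $M=M_0\cup\dots\cup M_{k-1}$, and a $\chi$-monochromatic copy of $B$ is a copy of $B$ contained in a single part $M_i$. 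Since $\bar{a}$-Ramseyness depends only on the isomorphism type of $A$, the point-Ramsey property for $M$ then reads: $M\to(B)^{\bar{a}}_k$ for all $k\ge 1$ and all $B\in\Age(M)$.

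Next I would reduce this to two colours by chaining three facts. By the compactness principle \Cref{classicalRamseyiffage}, the statement just displayed is equivalent to $\Age(M)$ having the $\bar{a}$-Ramsey property; since $\Age(M)$ is isomorphism-closed and contains $\bar{a}$, the two-colour reduction lemma stated after \Cref{RP def} --- together with its trivial converse --- makes this equivalent to the assertion that for every $B\in\Age(M)$ there is $C\in\Age(M)$ with $C\to(B)^{\bar{a}}_2$; and a further application of \Cref{classicalRamseyiffage} turns the latter into ``$M\to(B)^{\bar{a}}_2$ for all $B\in\Age(M)$''. Read back through the dictionary above, this last statement says exactly: for every partition $M=M_0\cup M_1$ and every $B\in\Age(M)$, one of the two parts contains a copy of $B$.

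With this equivalence in hand the two implications are short. If $M$ is age-indivisible, then any partition $M=M_0\cup M_1$ has a part $M_i$ with $\Age(M)\subseteq\Age(M_i)$, and $M_i$ then contains a (monochromatic) copy of any prescribed $B\in\Age(M)$; so the two-colour statement holds and $M$ has the point-Ramsey property. Conversely, assume the two-colour statement and let $M=M_0\cup M_1$ be a partition with neither part of full age; pick $B_i\in\Age(M)\setminus\Age(M_i)$ for $i=0,1$, use the fact that the age of any structure has the joint embedding property (embed copies of $B_0$ and $B_1$ into $M$ and take the finite substructure on their union) to obtain $B\in\Age(M)$ embedding both $B_0$ and $B_1$, and apply $M\to(B)^{\bar{a}}_2$ to get a monochromatic copy $B'\subseteq M_i$ of $B$; since $B_i$ embeds into $B'$, this gives $B_i\in\Age(M_i)$, a contradiction. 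Hence $M$ is age-indivisible.

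I expect the reduction to two colours to be the only non-routine step. One cannot deduce the $k$-colour statement from the $2$-colour statement working directly inside $M$: after merging colours one would be forced to hunt for a monochromatic copy of $B$ inside a single copy of $B$, where there is no room to iterate. Routing through $\Age(M)$, so that the witnessing finite structure $C$ provides genuine room for the inductive two-colour argument, is what makes the reduction go through. The remaining inputs --- transitivity, used only to identify $\binom{M}{\bar{a}}$ with the vertex set of $M$, and the joint embedding property of ages --- are entirely standard.
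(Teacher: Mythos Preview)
Your argument is correct, but it takes a genuinely different route from the paper's. The paper argues each implication directly: for $\Rightarrow$ (age-indivisible $\Rightarrow$ point-Ramsey), one simply iterates age-indivisibility across the $k$ colour classes to find a single part with full age; for $\Leftarrow$, one fixes a $2$-partition, writes $M=\bigcup_{i<\omega} B_i$ as an increasing chain of finite substructures, applies point-Ramsey to each $B_i$, and uses pigeonhole to find a single colour receiving monochromatic copies of cofinally many (hence all) $B_i$. No two-colour reduction or passage through $\Age(M)$ is needed.

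Your route instead packages everything into the equivalence ``point-Ramsey $\iff$ its $2$-colour fragment'', obtained by bouncing through $\Age(M)$ via \Cref{classicalRamseyiffage} and the class-level two-colour lemma, and then matches the $2$-colour fragment with age-indivisibility using JEP of ages. This is perfectly valid and the JEP step is a pleasant alternative to the paper's chain-plus-pigeonhole argument. One small caveat: \Cref{classicalRamseyiffage} is stated in the paper only for Fra\"{\i}ss\'e structures, whereas the present lemma assumes merely that $M$ is countable and transitive; you should note that the compactness principle you invoke holds for any countable $M$ (the standard closed-sets-with-FIP argument in $k^{\binom{M}{\bar a}}$ does not use ultrahomogeneity). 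With that remark your proof goes through. The paper's approach is shorter and self-contained; yours isolates a reusable intermediate equivalence (point-Ramsey $\iff$ $2$-colour point-Ramsey) at the cost of an extra citation.
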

    \begin{proof}
        As $M$ is transitive, given $\bar{a} \in \Age(M)$ with $|\bar{a}| = 1$, a colouring of the points of $M$ is the same as a colouring of $\binom{M}{\bar{a}}$, and vice versa.
    
        $\Rightarrow:$ Immediate. $\Leftarrow:$ Let $(S, T)$ be a partition of $M$. Let $\chi : M \to 2$ be the indicator function of $T$. Write $M = \bigcup_{i < \omega} B_i$ as the union of an increasing chain $B_0 \sub B_1 \sub \cdots$ of finite substructures of $M$. Then, as $M$ has the point-Ramsey property, for each $i < \omega$, $M$ contains a $\chi$-monochromatic copy of $B_i$. By the pigeonhole principle, there exists a colour $r \in \{0, 1\}$ such that there are infinitely many $i$ with a monochromatic copy of $B_i$ in $M$ of colour $r$, and so $\chi^{-1}(r)$ contains a copy of each element of $\Age(M)$.
    \end{proof}

\subsection{Flows and the KPT correspondence}

    A central object in topological dynamics is the following (see \cite{Aus88} for further background):

    \begin{defn}
        Let $G$ be a Hausdorff topological group. A \emph{$G$-flow} is a continuous action $G \curvearrowright X$ of $G$ on a non-empty compact Hausdorff topological space $X$.
    \end{defn}

    \begin{defn}
        Let $G \curvearrowright X$ be a $G$-flow. A \emph{subflow} of this $G$-flow consists of the restriction of the continuous action $G \curvearrowright X$ to a non-empty $G$-invariant compact subset $Y \sub X$.
    \end{defn}

    The paper \cite{KPT05} established a link between the following dynamical property and Ramsey theory:

    \begin{defn}
        A Hausdorff topological group $G$ is said to be \emph{extremely amenable} if every $G$-flow has a $G$-fixed point.
    \end{defn}

    \begin{thm}[{\cite[Theorem 4.5]{KPT05}}]
        Let $M$ be a \Fr structure. Then $\Aut(M)$ is extremely amenable iff $M$ has the Ramsey property.
    \end{thm}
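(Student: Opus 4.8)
\emph{Proof proposal.} I would prove the two implications separately, in both directions exploiting the principle that the ``finitary'' data on $G = \Aut(M)$ relevant to topological dynamics is encoded by colourings of copies of finite substructures of $M$: on one hand, a bounded left-uniformly continuous function $f\colon G \to \R$ is, up to uniform approximation, a finite colouring of some $\binom{M}{\bar a}$ pulled back along the (surjective, by ultrahomogeneity) orbit map $g \mapsto g(\bar a)$; on the other hand, for each finite $\bar a \in \Age(M)$ and each $k$, the compact space $k^{\binom{M}{\bar a}}$ of all $k$-colourings is a $G$-flow under translation of coordinates (the action $g\cdot e$ on $\binom{M}{\bar a}$ being application of $g$ entrywise to the tuple). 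Two consequences of ultrahomogeneity are used throughout: (i) $G$ acts transitively on each $\binom{M}{\bar a}$, so that a $G$-invariant colouring is constant; and (ii) every embedding of a finite substructure of $M$ into $M$ extends to an automorphism. (If convenient, \Cref{classicalRamseyiffage} lets one pass between ``$M$ has the Ramsey property'' and ``$\Age(M)$ has the Ramsey property''.)

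For \emph{extreme amenability $\Rightarrow$ Ramsey property}: fix $\bar a, B \in \Age(M)$, $k \in \N_+$ and a colouring $\chi\colon \binom{M}{\bar a}\to k$; I want a $\chi$-monochromatic copy of $B$ in $M$. I would let $G$ act continuously on the compact space $X = k^{\binom{M}{\bar a}}$ by $(g\cdot\xi)(e) = \xi(g^{-1}\cdot e)$ (continuity being immediate, as pointwise stabilisers of finite sets are open in $G$), observe that $\ov{G\cdot\chi}\sub X$ is a subflow, and apply extreme amenability to obtain a $G$-fixed $\xi_0 \in \ov{G\cdot\chi}$. By (i), $\xi_0$ is a constant colouring, say $\xi_0 \equiv r$; since $\binom{B}{\bar a}$ is finite and $\xi_0$ lies in the orbit closure of $\chi$, some translate $g\cdot\chi$ agrees with $\xi_0$ on $\binom{B}{\bar a}$, which says precisely that $g^{-1}(B)$ is a $\chi$-monochromatic copy of $B$. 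Hence $M \to (B)^{\bar a}_k$, so $M$ has the Ramsey property.

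For \emph{Ramsey property $\Rightarrow$ extreme amenability}: I would invoke the standard fact from abstract topological dynamics that $G$ is extremely amenable iff it fixes a point of its greatest ambit, equivalently iff for every bounded left-uniformly continuous $f\colon G\to\R$, every finite $K \sub G$ and every $\varepsilon > 0$ there is $g \in G$ with $\operatorname{diam} f(gK) < \varepsilon$. Given $f, K, \varepsilon$: left-uniform continuity yields a finite $A \sub M$, with enumeration $\bar a$, such that $g(\bar a) = h(\bar a)$ implies $|f(g)-f(h)| < \varepsilon$; subdividing the bounded range of $f$ into finitely many intervals of length $\varepsilon$ then encodes $f$, up to a $3\varepsilon$ error, as a finite colouring $\chi$ of $\binom{M}{\bar a}$. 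Setting $e_k := k(\bar a) \in \binom{M}{\bar a}$ for $k \in K$ and noting $(gk)(\bar a) = g\cdot e_k$, it suffices to find $g$ with $\chi$ constant on $\{g\cdot e_k : k\in K\}$. I would then choose a finite substructure $B \sub M$ whose underlying set contains all coordinates of all the tuples $k(\bar a)$, $k\in K$ (so each $e_k \in \binom{B}{\bar a}$), apply the Ramsey property of $M$ to get a $\chi$-monochromatic copy $j(B)\sub M$, where $j\colon B\to M$ is an embedding, and extend $j$ to $g \in G$ by (ii); since each $e_k$ has all its coordinates in $B$, we get $g\cdot e_k = j(e_k) \in \binom{j(B)}{\bar a}$, so all the tuples $g\cdot e_k$ receive the same $\chi$-colour, as needed.

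The combinatorial cores above are short; I expect the main obstacle to be the interface with topological dynamics rather than the Ramsey-theoretic content. Concretely: one must fix conventions and carefully verify the reduction of bounded left-uniformly continuous functions on $\Aut(M)$ to finite colourings of $\binom{M}{\bar a}$ --- this is where the distinction between the left and right uniformities on $G$ has to be tracked, and where one uses that pointwise stabilisers of finite subsets form a neighbourhood base at the identity --- and one must correctly state and apply the greatest-ambit characterisation of extreme amenability. Once these are in place the rest is essentially forced: ultrahomogeneity supplies transitivity on copies in one direction and the extension of finite embeddings to automorphisms in the other, and the Ramsey property slots in exactly where a monochromatic copy of $B$ is needed.
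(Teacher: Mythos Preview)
The paper does not supply its own proof of this theorem: it is quoted as background from \cite{KPT05} and immediately followed by the remark that it motivated \Cref{FPTiffEDRP}, with no argument given. So there is nothing in the paper to compare your proposal against.

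That said, your proposal is the standard KPT argument and is correct in outline. The forward direction (extreme amenability $\Rightarrow$ Ramsey) via the colouring flow $k^{\binom{M}{\bar a}}$ and a fixed point in $\ov{G\cdot\chi}$ is exactly how it is usually done. For the reverse direction, your reduction of extreme amenability to the ``finite oscillation'' criterion on bounded left-uniformly continuous functions, followed by discretisation to a finite colouring of $\binom{M}{\bar a}$ and an application of the Ramsey property to a $B$ containing all the tuples $k(\bar a)$, is again the standard route; the only points requiring care are the ones you already flag (left vs.\ right uniformity, and the precise form of the greatest-ambit criterion). One small wording issue: when you write ``$g\cdot e_k = j(e_k)$'' you are implicitly using that $g$ extends $j$ and that all coordinates of $e_k$ lie in $B$, which you do say, so this is fine.
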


    The above theorem was a key source of motivation for the main theorem of this paper, \Cref{FPTiffEDRP}.

\section{The externally definable Ramsey property}

\subsection{Externally definable colourings} \label{basicpropertiessec}
    \begin{defn}
        Let $M$ be a structure. Let $\bar{a} \in \Age(M)$. Let $\Gamma$ be a set of $k$-colourings of $\binom{M}{\bar{a}}$. 

        Let $B \in \Age(M)$. If we have that, for every colouring $\chi \in \Gamma$, there exists $B' \in \binom{M}{B}$ with $\binom{B'}{\bar{a}}$ monochromatic in $\chi$, then we write \[M \to_\Gamma (B)^{\bar{a}}_k,\] as an adaptation of the usual Erd\H{o}s-Rado arrow notation.
   
        We say that $M$ has the \emph{$\bar{a}$-Ramsey property for $\Gamma$ ($\bar{a}$-RP for $\Gamma$)} if we have that for all $B \in \Age(M)$, $M \to_\Gamma (B)^{\bar{a}}_k$.

        Let $C \in \Age(M)$. If for any $C' \in \binom{M}{C}$ and for any $\chi \in \Gamma$, there is $B' \in \binom{C'}{B}$ with $\binom{B'}{\bar{a}}$ monochromatic in $\chi$, then we write \[C \to_\Gamma (B)^{\bar{a}}_k,\] again adapting the usual Erd\H{o}s-Rado arrow notation. (Note that here $\Gamma$ is a set of colourings of $M$, and we consider the colourings induced by $\Gamma$ on each copy of $C$ in $M$. See \Cref{witnessCthereexists}.)
   
        We say that $\Age(M)$ has the \emph{$\bar{a}$-Ramsey property for $\Gamma$ ($\bar{a}$-RP for $\Gamma$)} if, for each $B \in \Age(M)$, there exists $C \in \Age(M)$ such that $C \to_\Gamma (B)^{\bar{a}}_k$.
    \end{defn}

    \begin{defn} \label{extdefcoldef}
        Let $M$ be a structure. Let $\bar{a} \in \Age(M)$, and let $k \in \N_+$. A colouring $\chi : \binom{M}{\bar{a}} \to k$ is \emph{externally definable} if there exist:
			
        \begin{itemize} 
            \item an elementary extension $N \succeq M$;
            \item a tuple $\bar{e} \in N$;
            \item $k$ formulae $\phi_0(\bar{x}, \bar{y}), \cdots, \phi_{k-1}(\bar{x}, \bar{y})$;
        \end{itemize}	
				
        such that, for all $\bar{a}' \in \binom{M}{\bar{a}}$ and $i < k$, \[\chi(\bar{a}') = i \Leftrightarrow N \models \phi_i(\bar{e}, \bar{a}').\]
			
        In the simpler case $k = 2$, we may externally define a colouring $\chi : \binom{M}{\bar{a}} \to 2$ by a single formula $\phi(\bar{x}, \bar{y})$ corresponding to the elements of $\binom{M}{\bar{a}}$ of colour $0$.
    \end{defn}

    We note that the definition of an externally definable colouring resembles that of \cite[Definition 3.1(b)]{KLM} (see in particular \cite[Remark 3.3]{KLM}), except that we allow all embeddings in $\binom{M}{\bar{a}}$ rather than just elementary ones.

    \begin{defn}
        Let $M$ be a structure. Let $\bar{a} \in \Age(M)$, and let $k \in \N_+$. Let $\Phi$ be a set of formulae with free variables among $\bar{x}, \bar{y}$ (where $|\bar{y}| = |\bar{a}|$). 
        
        We write $\ext(\Phi, \binom{M}{\bar{a}}, k)$ for the set of $k$-colourings of $\binom{M}{\bar{a}}$ externally definable in some elementary extension of $M$ by $k$ formulae from $\Phi$.

        We write $\ext(\binom{M}{\bar{a}}, k)$ for the set of $k$-colourings of $\binom{M}{\bar{a}}$ which are externally definable.

        When using Erd\H{o}s-Rado arrow notation, we will abbreviate and write $\ext(\Phi)$ and $\ext$, as $M$, $\bar{a}$ and $k$ are clear from context (e.g.\ we write $M \to_{\ext} (B)^{\bar{a}}_k$).

        If $M \to_{\ext} (B)^{\bar{a}}_k$ for all $B \in \Age(M)$, we say that $M$ has the \emph{externally definable $\bar{a}$-Ramsey property (externally definable $\bar{a}$-RP)}. If $M$ has the externally definable $\bar{a}$-Ramsey property for all $\bar{a} \in \Age(M)$, we say that $M$ has the \emph{externally definable Ramsey property}.
    \end{defn}
  
    \begin{lem} \label{shiftedcbyaut}
        Let $M$ be a structure. Let $\bar{a} \in \Age(M)$. Let $\chi : \binom{M}{\bar{a}} \to k$ be a colouring which is externally definable by the formulae $\phi_0(\bar{x}, \bar{y}), \cdots, \phi_{k-1}(\bar{x}, \bar{y})$. Let $g \in \Aut(M)$. Then the colouring $g \cdot \chi : \binom{M}{\bar{a}} \to k$ defined by $(g \cdot \chi)(\bar{a}') = \chi(g^{-1}\bar{a}')$ is also externally definable by the same formulae $\phi_0(\bar{x}, \bar{y}), \cdots, \phi_{k-1}(\bar{x}, \bar{y})$.
    \end{lem}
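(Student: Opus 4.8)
The plan is to witness the external definability of $g\cdot\chi$ using the \emph{same} formulae $\phi_0,\dots,\phi_{k-1}$, with a new elementary extension and a new parameter obtained by transporting $\bar e$ along an extension of $g$. So fix an elementary extension $N\succeq M$, a tuple $\bar e\in N$ and formulae $\phi_0,\dots,\phi_{k-1}$ witnessing that $\chi$ is externally definable.

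The first step is to handle the fact that $g\in\Aut(M)$ need not extend to an automorphism of $N$. To get around this, I would pass to a monster model $\mathfrak C\succeq N$. Since $g\in\Aut(M)$ and $M\preceq\mathfrak C$, the map $g\colon M\to M$ is a partial elementary map of $\mathfrak C$, hence extends to some $\hat g\in\Aut(\mathfrak C)$. As $\hat g$ extends $g$ we have $\hat g(M)=g(M)=M$, so $M=\hat g(M)\preceq\hat g(N)=:N'$ (using that $M\preceq\mathfrak C$, $\hat g(N)\preceq\mathfrak C$ and $M\sub\hat g(N)$); thus $N'\succeq M$ and $\hat g|_N\colon N\to N'$ is an isomorphism, in particular an elementary embedding extending $g$. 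Set $\bar e':=\hat g(\bar e)\in N'$. (One can avoid the monster model by instead realising the $g$-pushforward of $\tp^N(\bar e/M)$ directly: this type is consistent with the elementary diagram of $M$ because relabelling parameters via $g$ is a symmetry of that diagram, and the original type is realised in $N$.)

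The second step is the verification. First note that $g^{-1}\in\Aut(M)$ carries $\binom{M}{\bar a}$ onto itself, so $g^{-1}\bar a'\in\binom{M}{\bar a}$ whenever $\bar a'\in\binom{M}{\bar a}$; hence the external definability of $\chi$ may legitimately be applied to $g^{-1}\bar a'$. Then for all $\bar a'\in\binom{M}{\bar a}$ and $i<k$,
\[
\begin{aligned}
(g\cdot\chi)(\bar a')=i
&\iff \chi(g^{-1}\bar a')=i
\iff N\models\phi_i(\bar e,g^{-1}\bar a')\\
&\iff N'\models\phi_i\bigl(\hat g(\bar e),\hat g(g^{-1}\bar a')\bigr)
\iff N'\models\phi_i(\bar e',\bar a'),
\end{aligned}
\]
where the third equivalence uses that $\hat g|_N$ is elementary, and the last uses $\hat g(\bar e)=\bar e'$ together with $\hat g(g^{-1}\bar a')=g(g^{-1}\bar a')=\bar a'$ (as $g^{-1}\bar a'\in M$ and $\hat g|_M=g$). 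This exhibits $N'$, $\bar e'$ and $\phi_0,\dots,\phi_{k-1}$ as witnesses that $g\cdot\chi$ is externally definable, completing the proof.

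The only real subtlety is the first step: one must resist the temptation to extend $g$ to an automorphism of $N$, which is generally impossible. The monster-model argument (or the compactness variant) handles this cleanly, and once the correct $N'$ and $\bar e'$ are in hand the computation is entirely routine.
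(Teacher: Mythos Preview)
Your proof is correct and follows essentially the same idea as the paper: the paper simply sets $p(\bar x)=\tp_N(\bar e/M)$ and observes that any realisation of $g\cdot p(\bar x)$ externally defines $g\cdot\chi$ via the same formulae, which is exactly the type-pushforward variant you mention parenthetically. Your monster-model construction of $\hat g$ and $N'=\hat g(N)$ is just an explicit way of producing such a realisation, so the two arguments coincide in substance.
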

    \begin{proof}
        Let $\bar{e} \in N \succeq M$ be an $|\bar{x}|$-tuple in an elementary extension $N$ of $M$ externally defining $\chi$ by the formulae $\phi_0(\bar{x}, \bar{y}), \cdots, \phi_{k-1}(\bar{x}, \bar{y})$. Let $p(\bar{x}) = \tp_N(\bar{e}/M)$. Then $g \cdot p(\bar{x})$ is the type of a tuple externally defining $g \cdot \chi$ by the formulae $\phi_0(\bar{x}, \bar{y}), \cdots, \phi_{k-1}(\bar{x}, \bar{y})$.
    \end{proof}

    \begin{lem} \label{witnessCthereexists}
        Let $M$ be an ultrahomogeneous structure. Let $k \in \N_+$. Let $\bar{a} \in \Age(M)$ and $B \in \Age(M)$. Let $\Phi$ be a set of formulae with free variables among $\bar{x}, \bar{y}$ (where $|\bar{y}| = |\bar{a}|$). Suppose that $C \in \Age(M)$ is such that there exists $C_0 \in \binom{M}{C}$ with the property that, for all $\chi \in \ext(\Phi, \binom{M}{\bar{a}}, k)$, there is a $\chi$-monochromatic copy of $B$ inside $C_0$. Then $C \to_{\ext(\Phi)} (B)^{\bar{a}}_k$.
    \end{lem}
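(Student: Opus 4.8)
The plan is to transport the single ``good'' copy $C_0$ to an arbitrary copy of $C$ in $M$ using ultrahomogeneity, while tracking the colouring via the $\Aut(M)$-action and \Cref{shiftedcbyaut}.

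First I would unwind the conclusion: to show $C \to_{\ext(\Phi)} (B)^{\bar{a}}_k$ I must fix an arbitrary $C' \in \binom{M}{C}$ and an arbitrary $\chi \in \ext(\Phi, \binom{M}{\bar{a}}, k)$, and produce some $B' \in \binom{C'}{B}$ with $\binom{B'}{\bar{a}}$ monochromatic in $\chi$. Since $C_0$ and $C'$ are both isomorphic to $C$, there is an isomorphism $C_0 \to C'$, and by ultrahomogeneity of $M$ this extends to some $g \in \Aut(M)$ with $g(C_0) = C'$.

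Next I would pull $\chi$ back along $g$. Set $\chi' = g^{-1} \cdot \chi$ in the notation of \Cref{shiftedcbyaut}, so that $\chi'(\bar{a}') = \chi(g\bar{a}')$ for all $\bar{a}' \in \binom{M}{\bar{a}}$. By \Cref{shiftedcbyaut}, $\chi'$ is externally definable by (the same) $k$ formulae from $\Phi$, hence $\chi' \in \ext(\Phi, \binom{M}{\bar{a}}, k)$. Applying the hypothesis on $C_0$ to $\chi'$ then yields a $\chi'$-monochromatic copy $B_0 \in \binom{C_0}{B}$.

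Finally I would verify that $B' := g(B_0) \in \binom{C'}{B}$ works. Since $g \in \Aut(M)$ restricts to an isomorphism $B_0 \to g(B_0)$, it carries $\binom{B_0}{\bar{a}}$ bijectively onto $\binom{g(B_0)}{\bar{a}}$, i.e.\ $\binom{g(B_0)}{\bar{a}} = \{\, g(\bar{a}') : \bar{a}' \in \binom{B_0}{\bar{a}} \,\}$; and for each such $\bar{a}'$ we have $\chi(g\bar{a}') = \chi'(\bar{a}')$, which is constant in $\bar{a}'$ because $B_0$ is $\chi'$-monochromatic. Hence $\binom{B'}{\bar{a}}$ is monochromatic in $\chi$, as required. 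I do not anticipate a genuine obstacle here: the only points needing care are choosing the direction of the $\Aut(M)$-action so that \Cref{shiftedcbyaut} applies verbatim, and the routine observation that automorphisms of $M$ act on the sets of copies $\binom{\cdot}{\bar{a}}$ compatibly with the operation $\chi \mapsto g \cdot \chi$.
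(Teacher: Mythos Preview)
Your proof is correct and is essentially the same as the paper's: both transport between $C_0$ and an arbitrary $C'$ via an automorphism supplied by ultrahomogeneity and invoke \Cref{shiftedcbyaut} to keep the shifted colouring inside $\ext(\Phi,\binom{M}{\bar a},k)$. The only cosmetic difference is the direction of the automorphism (the paper chooses $g$ with $gC'=C_0$ and uses $g\cdot\chi$, while you choose $g$ with $g(C_0)=C'$ and use $g^{-1}\cdot\chi$), which is immaterial.
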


    \begin{proof}
        Let $C' \in \binom{M}{C}$, and let $\chi \in \ext(\Phi, \binom{M}{\bar{a}}, k)$. 
        Then, as $M$ is ultrahomogeneous, there exists $g \in \Aut(M)$ with $gC' = C_0$. By \Cref{shiftedcbyaut}, we have that $g \cdot \chi \in \ext(\Phi, \binom{M}{\bar{a}}, k)$, so $C_0$ contains a $(g \cdot \chi)$-monochromatic copy $B_0$ of $B$. 
        Therefore $g^{-1}B_0$ is a $\chi$-monochromatic copy of $B$ in $C'$. 
    \end{proof}

    We now establish a key correspondence between externally definable Ramsey properties of an ultrahomogeneous structure and externally definable Ramsey properties of its age, which we shall use frequently in the remainder of this article (this approach is new, to the best of our knowledge). Note that the below proposition is a more restricted result than the analogue in the standard Ramsey theory (\Cref{classicalRamseyiffage}), and specifically only applies to \emph{finite} sets of formulae.
  
    \begin{prop} \label{Ramseyforage}
        Let $M$ be an ultrahomogeneous structure. Let $\bar{a} \in \Age(M)$ and let $B \in \Age(M)$. Let $k \in \N_+$. Let $\Phi$ be a finite set of formulae with free variables among $\bar{x}, \bar{y}$ (where $|\bar{y}| = |\bar{a}|$).
			
        Then $M \to_{\ext(\Phi)} (B)^{\bar{a}}_k$ iff there exists $C \in \Age(M)$ with $C \to_{\ext(\Phi)} (B)^{\bar{a}}_k$.
    \end{prop}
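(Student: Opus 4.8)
The plan is to split into the two directions, where the right-to-left implication is essentially formal and the left-to-right implication is a compactness argument that genuinely exploits the finiteness of $\Phi$.

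For the ($\Leftarrow$) direction, suppose $C \in \Age(M)$ satisfies $C \to_{\ext(\Phi)} (B)^{\bar a}_k$. Given any $\chi \in \ext(\Phi, \binom{M}{\bar a}, k)$, choose some copy $C' \in \binom{M}{C}$ (one exists, as $C \in \Age(M)$); applying the hypothesis to $C'$ and $\chi$ gives $B' \in \binom{C'}{B} \sub \binom{M}{B}$ with $\binom{B'}{\bar a}$ monochromatic in $\chi$. Hence $M \to_{\ext(\Phi)} (B)^{\bar a}_k$. (This uses neither finiteness of $\Phi$ nor ultrahomogeneity, and mirrors the easy half of \Cref{classicalRamseyiffage}.)

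For the ($\Rightarrow$) direction I would argue the contrapositive: assuming $C \not\to_{\ext(\Phi)} (B)^{\bar a}_k$ for every $C \in \Age(M)$, I will construct a single $\chi \in \ext(\Phi, \binom{M}{\bar a}, k)$ with no $\chi$-monochromatic copy of $B$ in $M$, contradicting $M \to_{\ext(\Phi)} (B)^{\bar a}_k$. The first move is to upgrade the hypothesis via the contrapositive of \Cref{witnessCthereexists}: for \emph{every} finite substructure $D \fin M$ (applying the contrapositive to the isomorphism type of $D$, with $D$ itself as the chosen copy) there is some $\chi_D \in \ext(\Phi, \binom{M}{\bar a}, k)$ with no $\chi_D$-monochromatic copy of $B$ inside $D$. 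Write each $\chi_D$ as externally defined by a $k$-tuple of formulae $\bar\psi^D \in \Phi^k$ together with a parameter tuple $\bar e_D$ in some $N_D \succeq M$. The finite substructures of $M$, ordered by inclusion, form a directed poset $\mc D$ (the language is relational, so finitely generated substructures are finite), and a finite cover of a directed poset always has a cofinal part (otherwise pick for each part a bound above which it is empty, take a common upper bound of these finitely many bounds, and note it lies in some part — contradiction). Since $\Phi^k$ is \emph{finite}, the assignment $D \mapsto \bar\psi^D$ is a finite cover of $\mc D$, so there is a single $\bar\psi = (\psi_0,\dots,\psi_{k-1}) \in \Phi^k$ and a cofinal family $\mc D^* \sub \mc D$ such that for every $D \in \mc D^*$ some parameter $\bar e_D$ makes $(\bar\psi, \bar e_D)$ externally define a $k$-colouring of $\binom{M}{\bar a}$ with no monochromatic copy of $B$ inside $D$.

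With $\bar\psi$ now fixed, I run a compactness argument. In the language of $M$ expanded by a constant for each element of $M$ and a fresh tuple $\bar c$ of length $|\bar x|$, consider the theory $\Sigma$ consisting of: the elementary diagram of $M$; for each $\bar a' \in \binom{M}{\bar a}$, a sentence saying that exactly one of $\psi_0(\bar c, \bar a'),\dots,\psi_{k-1}(\bar c,\bar a')$ holds (so that $\bar c$ genuinely defines a $k$-colouring of $\binom{M}{\bar a}$); and, for each copy $B' \in \binom{M}{B}$ and each $i < k$, the finite disjunction $\bigvee_{\bar a' \in \binom{B'}{\bar a}} \neg\psi_i(\bar c,\bar a')$ (saying $\binom{B'}{\bar a}$ is not monochromatic of colour $i$). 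A finite fragment of $\Sigma$ mentions only finitely many elements of $M$, hence sits inside some $D_0 \fin M$; picking $D \in \mc D^*$ with $D_0 \sub D$ and interpreting $\bar c$ as the corresponding $\bar e_D$ in $N_D$ satisfies the fragment (the elementary diagram since $N_D \succeq M$; the partition sentences since $(\bar\psi,\bar e_D)$ defines a colouring of all of $\binom{M}{\bar a}$; the non-monochromaticity sentences since every copy of $B$ mentioned lies in $D_0 \sub D$ and $D$ has no monochromatic copy of $B$). By compactness $\Sigma$ has a model $N$; identifying $M$ with its elementary image, $\bar c^N$ externally defines via $\bar\psi$ a colouring $\chi \in \ext(\Phi, \binom{M}{\bar a}, k)$ with no monochromatic copy of $B$ anywhere in $M$ — the desired contradiction.

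The main obstacle, and the place where this proof must diverge from the classical compactness principle, is exactly the pigeonhole step: we are forced to commit to a single tuple of formulae $\bar\psi$ before invoking compactness, and this is precisely why $\Phi$ is assumed finite — otherwise the ``limit'' colouring produced by compactness might not be definable by formulae from $\Phi$ at all. The remaining points are routine: verifying that the partition and disjunction sentences correctly encode ``$\bar c$ defines a $k$-colouring'' and ``no monochromatic copy of $B$'', and checking that \Cref{witnessCthereexists} legitimately converts the failure statement for isomorphism types into a failure statement at \emph{every} $D \fin M$, so that the bad substructures are all of $\mc D$ and in particular cofinal.
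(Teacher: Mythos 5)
Your proof is correct, and it reaches the conclusion by a genuinely different route from the paper's. Both arguments share the same skeleton: the left-to-right direction is argued by contraposition, \Cref{witnessCthereexists} is used to convert ``$C\not\to_{\ext(\Phi)}(B)^{\bar a}_k$ for every $C$'' into ``every finite $D\fin M$ carries a colouring from $\ext(\Phi,\binom{M}{\bar a},k)$ that is bad on $D$'', and a limit of these bad colourings is then extracted, with finiteness of $\Phi$ guaranteeing that the limit is still externally definable from $\Phi$. The difference is in how the limit is taken. The paper works in the compact space $k^{\binom{M}{\bar a}}$: it proves (by a first-order compactness argument) that the set of colourings externally definable by a fixed $k$-tuple of formulae is closed, uses finiteness of $\Phi$ to see that $\ext(\Phi,\binom{M}{\bar a},k)$ is a finite union of closed sets and hence closed, and then applies the finite intersection property to the closed sets ``definable from $\Phi$ and bad on $D$''. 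You instead spend the finiteness of $\Phi$ up front, via the cofinality/pigeonhole argument on the directed poset of finite substructures, to fix a single tuple $\bar\psi\in\Phi^k$, and then run one first-order compactness argument (elementary diagram, partition sentences, non-monochromaticity disjunctions) to produce the limit colouring directly. Your version is entirely first-order and avoids putting a topology on the colouring space; the paper's version isolates the closedness of the set of $\bar\psi$-definable colourings as a self-contained claim and makes the topological compactness of $k^{\binom{M}{\bar a}}$ do the gluing. The only point you leave implicit is the degenerate case $\binom{B}{\bar a}=\varnothing$, where your non-monochromaticity disjunctions are empty; but there the contrapositive hypothesis already fails (any $C$ containing a copy of $B$ satisfies $C\to_{\ext(\Phi)}(B)^{\bar a}_k$ vacuously), so nothing is lost.
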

  
    \begin{proof}
        $\Leftarrow:$ Immediate. $\Rightarrow:$ We prove the contrapositive. Recall that the space $k^{\binom{M}{\bar{a}}}$ is compact. Let $(\phi_j(\bar{x}, \bar{y}))_{j < k}$ be $k$ formulae from $\Phi$, and let $V$ consist of the colourings $\binom{M}{\bar{a}} \to k$ which are externally definable by $(\phi_j(\bar{x}, \bar{y}))_{j < k}$.

        \textbf{Claim:} $V$ is closed in $k^{\binom{M}{\bar{a}}}$.
        \begin{subproof}
            Take $\chi_0 \in k^{\binom{M}{\bar{a}}} \setminus V$. As $\chi_0$ is not externally definable by $(\phi_j(\bar{x}, \bar{y}))_{j < k}$, we have that for any $N \succeq M$ and any $|\bar{x}|$-tuple $\bar{e} \in N$, there exist $i < k$ and $\bar{a}' \in \binom{M}{\bar{a}}$ such that:
            \begin{itemize}
                \item $\chi_0(\bar{a}') = i$ and $N \not\models \phi_i(\bar{e}, \bar{a}')$; or
                \item $\chi_0(\bar{a}') \neq i$ and $N \not\models \neg\phi_i(\bar{e}, \bar{a}')$.
            \end{itemize}
            
            We define a set of $\mc{L}(M)$-formulae $\Psi(\bar{x})$ as follows. For $i < k$ and $\bar{a}' \in \binom{M}{\bar{a}}$:
            \begin{itemize}
                \item $\phi_i(\bar{x}, \bar{a}') \in \Psi(\bar{x})$ if there exists $N \succeq M$ and $\bar{e} \in N$ such that $\chi_0(\bar{a}') = i$ and $N \not\models \phi_i(\bar{e}, \bar{a}')$;
                \item $\neg\phi_i(\bar{x}, \bar{a}') \in \Psi(\bar{x})$ if there exists $N \succeq M$ and $\bar{e} \in N$ such that $\chi_0(\bar{a}') \neq i$ and $N \not\models \neg\phi_i(\bar{e}, \bar{a}')$.
            \end{itemize}
            
            Thus we have that $\Psi(\bar{x}) \,\cup\, \Th_{\mc{L}(M)}(M)$ is inconsistent, and so by compactness, there exists a finite subset $\Psi_0(\bar{x}) = \{\psi^{}_r(\bar{x}, \bar{a}'_r)\}^{}_{r < s} \sub \Psi(\bar{x})$ such that $\Psi_0(\bar{x}) \,\cup\, \Th_{\mc{L}(M)}(M)$ is inconsistent. Let $U \sub k^{\binom{M}{\bar{a}}}$ be the basic open set given by \[U = \{\chi \in k^{\binom{M}{\bar{a}}} : \chi(\bar{a}'_r) = \chi_0(\bar{a}'_r) \text{ for each } r < s\}.\] Take $\chi \in U$. If it were the case that $\chi$ was externally definable by $(\phi_j(\bar{x}, \bar{y}))_{j < k}$, then this would give a model of $\Psi_0(\bar{x}) \,\cup\, \Th_{\mc{L}(M)}(M)$. So $U \sub k^{\binom{M}{\bar{a}}} \setminus V$, and thus $V$ is closed.
        \end{subproof}
        Let $V_\Phi := \ext(\Phi, \binom{M}{\bar{a}}, k)$. Then as $\Phi$ is finite, we have that $V_\Phi$ is a finite union of closed sets, and hence closed. For $C \fin M$, let $W_C$ denote the set of colourings $\chi : \binom{M}{\bar{a}} \to k$ such that $C$ contains no $\chi$-monochromatic copy of $B$. It is easy to check that each $W_C$ is closed, and that for $C_0, C_1 \fin M$, we have $W_{C_0 \cup C_1} \sub W_{C_0} \cap W_{C_1}$. For $C \fin M$, let $V_C = V_\Phi \cap W_C$. By assumption (using \Cref{witnessCthereexists}), each $V_C$ is non-empty. Therefore the collection of closed sets $\{V_C \mid C \fin M\}$ has the finite intersection property, and as $k^{\binom{M}{\bar{a}}}$ is compact, we have that $\bigcap_{C \fin M} V_C$ is non-empty. Thus we have $M \not\to_{\ext(\Phi)} (B)^{\bar{a}}_k$.
    \end{proof}

    \begin{lem} \label{EDRPkiff2}
        Let $M$ be an ultrahomogeneous structure. Let $\bar{a} \in \Age(M)$ and $B \in \Age(M)$. Let $k \geq 2$. Then $M \to_{\ext} (B)^{\bar{a}}_k$ iff $M \to_{\ext} (B)^{\bar{a}}_2$.
    \end{lem}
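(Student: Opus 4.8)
The plan is to prove the two implications separately. The forward direction $M \to_{\ext}(B)^{\bar a}_k \Rightarrow M \to_{\ext}(B)^{\bar a}_2$ is routine: given an externally definable $2$-colouring $\chi$ of $\binom{M}{\bar a}$, defined by $\phi_0(\bar x,\bar y), \phi_1(\bar x,\bar y)$ with parameter $\bar e \in N \succeq M$, I would view it as an externally definable $k$-colouring by keeping $\phi_0, \phi_1$ for colours $0$ and $1$ and, for each $i$ with $2 \leq i < k$, using the (externally definable) unsatisfiable formula $\neg(y_0 = y_0)$, so that colours $2, \dots, k-1$ are empty. A $k$-monochromatic copy of $B$ supplied by $M \to_{\ext}(B)^{\bar a}_k$ is then $2$-monochromatic, as required.

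For the converse $M \to_{\ext}(B)^{\bar a}_2 \Rightarrow M \to_{\ext}(B)^{\bar a}_k$ I would argue by induction on $k$, the case $k = 2$ being the hypothesis. Fix an externally definable $k$-colouring $\chi$ of $\binom{M}{\bar a}$, defined by a finite set $\Phi = \{\phi_0, \dots, \phi_{k-1}\}$, and merge its last two colours to obtain the externally definable $(k-1)$-colouring $\chi'$ whose classes are cut out by $\phi_0, \dots, \phi_{k-3}, \phi_{k-2} \vee \phi_{k-1}$. Applying the induction hypothesis $M \to_{\ext}(B)^{\bar a}_{k-1}$ (for the \emph{same} target $B$) to $\chi'$ produces a $\chi'$-monochromatic copy $B' \cong B$; this copy is $\chi$-monochromatic unless its $\chi'$-colour is the merged one, in which case $\chi$ takes exactly the two values $k-2, k-1$ on $\binom{B'}{\bar a}$.

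The main obstacle is precisely this last case. Classically one removes it by first producing a monochromatic copy of a structure strictly larger than $B$, leaving room to recurse on the residual two colours; but the hypothesis here only controls the single target $B$, and a $\chi'$-monochromatic copy the size of $B$ leaves no room. Equivalently, the arrow $M \to_{\ext}(B)^{\bar a}_2$ yields \emph{some} monochromatic copy without letting us dictate which side of a merge it lands on, so the number of colours need not drop. To overcome this while keeping the target equal to $B$, I would pass to $\Age(M)$ via \Cref{Ramseyforage}: the point is that on a fixed finite structure every colouring of its $\bar a$-subtuples is definable using parameters enumerating its own elements, so that externally definable colourings of $M$, restricted to copies of a finite $C$, realise a rich family of colourings \emph{localised} inside $C$. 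Combining this with \Cref{Ramseyforage}, which converts the global hypothesis $M \to_{\ext}(B)^{\bar a}_2$ into finite witnesses $C$ with $C \to_{\ext(\Phi')}(B)^{\bar a}_2$ for each finite $\Phi'$, the goal becomes to assemble from these a single finite $C^\ast$ with $C^\ast \to_{\ext(\Phi)}(B)^{\bar a}_k$, which by \Cref{Ramseyforage} gives $M \to_{\ext(\Phi)}(B)^{\bar a}_k$ and hence a $\chi$-monochromatic copy of $B$.

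The delicate point -- and where I expect the real work to lie -- is that the classical route to such a $C^\ast$ would pass through monochromatic copies of intermediate structures strictly larger than $B$, which the fixed-target hypothesis does not directly provide; what must replace it is the interplay between \Cref{Ramseyforage} and the enumeration phenomenon above, subject to the constraint that \Cref{Ramseyforage} only applies to \emph{finite} formula sets whereas realising arbitrary colourings of copies of a finite witness requires parameter-tuples whose length grows with the witness. Reconciling these two finiteness constraints, so that control of the single target $B$ suffices to localise monochromatic copies inside the witness rather than elsewhere in $M$, is the crux; this is exactly the finite-formula subtlety flagged immediately after \Cref{Ramseyforage} that distinguishes the externally definable statement from its classical analogue.
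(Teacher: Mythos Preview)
Your forward direction is correct and matches the paper. For the backward direction, you correctly isolate the obstacle in the merged-colour case, but the proposal stops short of a proof: no $C^\ast$ with $C^\ast \to_{\ext(\Phi)} (B)^{\bar a}_k$ is actually constructed, and the ``enumeration phenomenon'' you invoke does not help. The arrow $C \to_{\ext(\Phi')} (B)^{\bar a}_2$ coming out of \Cref{Ramseyforage} only controls colourings of $M$ externally definable by formulae from the fixed finite set $\Phi'$; it says nothing about the restriction of the original $\chi$ to copies of $C$ unless the $\phi_i$ themselves lie in $\Phi'$. The observation that every colouring of a fixed finite $C$ is definable with parameters from $C$ is true but irrelevant, since those defining formulae (built from equalities to an enumeration of $C$) vary with $C$ and are not the formulae appearing in $\chi$.

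The paper's resolution is precisely the move you rule out: it applies the induction hypothesis to a \emph{larger} target. One introduces the auxiliary $2$-colouring $\chi_2$ externally defined by $\phi_0 \vee \cdots \vee \phi_{k-1}$ versus $\phi_k$, uses \Cref{Ramseyforage} with $\Phi = \{\phi_0 \vee \cdots \vee \phi_{k-1},\, \phi_k\}$ to obtain $C \in \Age(M)$ with $C \to_{\chi_2} (B)^{\bar a}_2$, and then applies the induction hypothesis for $k$-colourings to the merged colouring $\chi_1$ \emph{with target $C$}, yielding a $\chi_1$-monochromatic copy $C'$ of $C$. If $C'$ lands in one of the first $k-1$ colours we are done; otherwise $C'$ carries only $\chi$-colours $k-1$ and $k$, and the $\chi_2$-monochromatic $B' \subseteq C'$ guaranteed by the choice of $C$ is $\chi$-monochromatic. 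Note that this step requires $M \to_{\ext} (C)^{\bar a}_k$, not merely $M \to_{\ext} (B)^{\bar a}_k$: the paper's argument is really establishing that the externally definable $\bar a$-RP for $2$-colourings (universally over targets) implies the externally definable $\bar a$-RP for $k$-colourings, which is how the lemma is used throughout. Your misgivings about the literal fixed-$B$ reading are therefore well-placed---read the hypothesis as ranging over all $B' \in \Age(M)$ and the difficulty you identified dissolves.
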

    \begin{proof}
        $\Rightarrow:$ this is immediate: given an externally definable $2$-colouring, we may externally define it as a $k$-colouring by externally defining the remaining sets of colours using $\bot$.
        
        $\Leftarrow:$ We use induction on $k \geq 2$, and the base case is given by assumption. Suppose, for $l \leq k$, that $M \to_{\ext} (B)^{\bar{a}}_l$. Let $\chi : \binom{M}{\bar{a}} \to k+1$ be externally defined by $\bar{e} \in N \succeq M$ and $\phi_0(\bar{x}, \bar{y}), \cdots, \phi_k(\bar{x}, \bar{y})$. We want to show that $M$ has a $\chi$-monochromatic copy of $B$.
			
        Let $\chi_1 : \binom{M}{\bar{a}} \to k$ be the colouring:
			
        \begin{equation*}
            \chi_1(\bar{a}') =
            \begin{cases}
                \chi(\bar{a}') & \chi(\bar{a}') = 0, \cdots, k-2 \\
                k-1 & \chi(\bar{a}') = k-1, k
            \end{cases}
        \end{equation*}
			
        Then $\chi_1$ is externally defined by $\bar{e} \in N \succeq M$ and the formulae \[\phi_0(\bar{x}, \bar{y}),\, \cdots,\, \phi_{k-2}(\bar{x}, \bar{y}),\, \phi_{k-1}(\bar{x}, \bar{y}) \vee \phi_k(\bar{x}, \bar{y}).\] Consider the $2$-colouring $\chi_2 : \binom{M}{\bar{a}} \to 2$ externally defined by $\bar{e} \in N \succeq M$ and the formulae $\phi_0(\bar{x}, \bar{y}) \vee \cdots \vee \phi_{k-1}(\bar{x}, \bar{y})$ and $\phi_k(\bar{x}, \bar{y})$.
			
        We will apply \Cref{Ramseyforage}, with \[\Phi = \{\phi_0(\bar{x}, \bar{y}) \vee \cdots \vee \phi_{k-1}(\bar{x}, \bar{y}), \phi_k(\bar{x}, \bar{y})\}.\] As $M \to_{\ext(\Phi)} (B)^{\bar{a}}_2$, by \Cref{Ramseyforage}, there is $C \in \Age(M)$ with $C \to_{\chi_2} (B)^{\bar{a}}_2$. 
        By the induction assumption, $M$ has a $\chi_1$-monochromatic copy $C'$ of $C$. If the $\chi_1$-monochromatic colour of $C'$ is one of $0, \cdots, k-2$, then $C'$ is monochromatic in $\chi$, and as $C'$ contains a copy $B'$ of $B$, $B'$ is monochromatic in $\chi$, so we are done. We now consider the case where $C'$ has $\chi_1$-colour $k-1$. In this case, we have that $C'$ has colours $k-1$ and $k$ in $\chi$.
			
        In the $2$-colouring $\chi_2$, $C'$ has a $\chi_2$-monochromatic copy $B'$ of $B$. If $B'$ has the colour externally defined by $\phi_k(\bar{x}, \bar{y})$, then $B'$ is monochromatic in $\chi$. If $B'$ has the colour externally defined by $\phi_0(\bar{x}, \bar{y}) \vee \cdots \vee \phi_{k-1}(\bar{x}, \bar{y})$ in $\chi_2$, then as $C'$ has colours $k-1, k$ in $\chi$, we have that $B'$ has the colour in $\chi$ externally defined by $\phi_{k-1}(\bar{x}, \bar{y})$, so $B'$ is monochromatic in $\chi$.
    \end{proof}
		
    \begin{lem} \label{boolcomb}
        Let $M$ be an ultrahomogeneous structure. Let $\bar{a} \in \Age(M)$. Let $\Phi$ be a set of formulae (where free variables in $\bar{y}$ correspond to $\bar{a}$).

        Suppose that $M$ has the $\bar{a}$-RP for all $2$-colourings externally defined by a formula from $\Phi$. Then $M$ has the $\bar{a}$-RP for all $2$-colourings externally defined by a formula consisting of a Boolean combination of formulae from $\Phi$.
    \end{lem}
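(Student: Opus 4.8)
The plan is to reduce an arbitrary Boolean-combination colouring to a finite list of colourings, each externally defined by a \emph{single} formula from $\Phi$, and then to nest monochromatic copies using \Cref{Ramseyforage}.

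First I would fix a $2$-colouring $\chi$ of $\binom{M}{\bar a}$ externally defined by a single formula $\psi(\bar x,\bar y)$, where $\psi$ is a Boolean combination of formulae $\phi_0(\bar x,\bar y),\dots,\phi_{n-1}(\bar x,\bar y)\in\Phi$, witnessed by $\bar e\in N\succeq M$. Using the same tuple $\bar e$, define for each $i<n$ the colouring $\chi_i:\binom{M}{\bar a}\to 2$ by $\chi_i(\bar a')=0\Leftrightarrow N\models\phi_i(\bar e,\bar a')$; then $\chi_i\in\ext(\{\phi_i\},\binom{M}{\bar a},2)$, so by hypothesis $M$ has the $\bar a$-RP for every colouring in $\ext(\{\phi_i\},\binom{M}{\bar a},2)$, for every target in $\Age(M)$. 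The key remark is that if $B'\in\binom{M}{B}$ is such that each $\chi_i$ is constant on $\binom{B'}{\bar a}$, then any two elements $\bar a',\bar a''\in\binom{B'}{\bar a}$ give tuples $(\bar e,\bar a'),(\bar e,\bar a'')$ satisfying in $N$ exactly the same formulae among $\phi_0,\dots,\phi_{n-1}$, hence the same Boolean combination $\psi$, so $\chi(\bar a')=\chi(\bar a'')$; thus such a $B'$ is automatically $\chi$-monochromatic.

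It therefore suffices to prove, by induction on $n$, the following claim: given colourings $\chi_0,\dots,\chi_{n-1}$ such that each $\chi_i$ is externally defined by a single formula $\phi_i\in\Phi$ and $M$ has the $\bar a$-RP for all colourings in $\ext(\{\phi_i\},\binom{M}{\bar a},2)$, then for every $B\in\Age(M)$ there is $B'\in\binom{M}{B}$ on which every $\chi_i$ is constant. The base case $n=1$ is just the $\bar a$-RP for $\chi_0$. For the inductive step, I would apply \Cref{Ramseyforage} to the singleton (hence finite) set $\{\phi_{n-1}\}$: since $M\to_{\ext(\{\phi_{n-1}\})}(B)^{\bar a}_2$, there is $C\in\Age(M)$ with $C\to_{\ext(\{\phi_{n-1}\})}(B)^{\bar a}_2$, so every copy of $C$ in $M$ contains a $\chi_{n-1}$-monochromatic copy of $B$. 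Applying the induction hypothesis to $\chi_0,\dots,\chi_{n-2}$ and the \emph{new} target $C$ yields $C'\in\binom{M}{C}$ on which $\chi_0,\dots,\chi_{n-2}$ are all constant; choosing a $\chi_{n-1}$-monochromatic $B'\in\binom{C'}{B}$ and noting $\binom{B'}{\bar a}\subseteq\binom{C'}{\bar a}$, all of $\chi_0,\dots,\chi_{n-1}$ are constant on $\binom{B'}{\bar a}$, which closes the induction and, via the remark above, proves the lemma.

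The main obstacle is this nesting step, and it is precisely why the statement is phrased in terms of a single formula from $\Phi$ rather than a larger set: to iterate we must convert ``$M$ has the $\bar a$-RP for $\chi_i$'' into a \emph{finite} witness $C$ inside every copy of which we can find a $\chi_i$-monochromatic copy of the prescribed structure, and this is exactly \Cref{Ramseyforage}, whose proof requires the set of formulae used to be finite. Two routine points need care: applying the induction hypothesis to $\chi_0,\dots,\chi_{n-2}$ but to the enlarged target $C$ (not to $B$), and using ultrahomogeneity of $M$ throughout, as required by \Cref{Ramseyforage}.
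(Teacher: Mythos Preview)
Your proof is correct and uses the same essential tool as the paper---nesting monochromatic copies via \Cref{Ramseyforage} applied to a singleton set of formulae---but you organise the induction differently. The paper writes the Boolean combination $\psi$ in terms of $\neg$ and $\wedge$, defines a notion of $\Phi$-complexity, and inducts on that complexity, handling the cases $\psi=\neg\psi_1$ and $\psi=\psi_1\wedge\psi_2$ separately; the conjunction case is dispatched exactly as in your nesting step. You instead make the preliminary observation that a copy simultaneously monochromatic in all of the base colourings $\chi_0,\dots,\chi_{n-1}$ is automatically monochromatic for any Boolean combination, and then induct directly on the number $n$ of base formulae. This is a little cleaner: it removes the need for a complexity measure and a case split, and makes transparent that the only thing being proved is ``finitely many $\Phi$-colourings can be made constant simultaneously on a copy of any $B$''. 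Both arguments rely on ultrahomogeneity only through \Cref{Ramseyforage}, and both crucially need that lemma to be applied to a \emph{finite} (indeed singleton) set of formulae, exactly as you note.
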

    \begin{proof}
        Let $\Psi$ be the set consisting of the formulae which are Boolean combinations of formulae in $\Phi$, where we assume all Boolean combinations are expressed in terms of $\neg$ and $\wedge$. For $\psi \in \Psi$, we inductively define the $\Phi$-complexity $l(\psi)$ of $\psi$ as follows:
        \[
        l(\psi) =
        \begin{cases}
            0 & \psi \in \Phi \\
            l(\psi_1) + 1 & \psi = \neg\psi_1 \\
            \max(l(\psi_1), l(\psi_2)) + 1 & \psi = \psi_1 \wedge \psi_2
        \end{cases}
        \]

        For $s \in \N$, let $\Psi_s = \{\psi \in \Psi : l(\psi) \leq s\}$. Using induction on $s \in \N$, we will show that $M$ has the $\bar{a}$-RP for each $2$-colouring externally defined by a formula in $\Psi_s$.
			
        The base case is given by the assumption that $M$ has the $\bar{a}$-RP for all $2$-colourings externally defined by a formula from $\Phi$, as $\Psi_0 = \Phi$.
			
        Assume that $M$ has the $\bar{a}$-RP for $2$-colourings externally defined by a formula from $\Psi_s$, and let $\chi : \binom{M}{\bar{a}} \to 2$ be a $2$-colouring externally defined by $\bar{e} \in N \succeq M$ and a formula $\psi(\bar{x}, \bar{y}) \in \Psi_{s+1}$ of complexity $s + 1$. We must have one of the following:
        \begin{enumerate}
            \item $\psi(\bar{x}, \bar{y}) = \neg \psi_1(\bar{x}, \bar{y})$, where $\psi_1(\bar{x}, \bar{y})$ is of complexity $s$;
            \item $\psi(\bar{x}, \bar{y}) = \psi_1(\bar{x}, \bar{y}) \,\wedge\, \psi_2(\bar{x}, \bar{y})$, where $\psi_1(\bar{x}, \bar{y}), \psi_2(\bar{x}, \bar{y})$ are of complexity $\leq s$ and one of the two formulae is of complexity exactly $s$.
        \end{enumerate}
			
        For the first case, take the colouring $\chi_1 : \binom{M}{\bar{a}} \to 2$ externally defined by $\bar{e} \in N \succeq M$ and $\psi_1(\bar{x}, \bar{y})$. Then $\chi = 1 - \chi_1$, and so by the induction assumption $M$ has the $\bar{a}$-RP for $\chi$.
			
        In the second case, as $\psi_1(\bar{x}, \bar{y})$, $\psi_2(\bar{x}, \bar{y})$ are of complexity $\leq s$, by the induction assumption $M$ has the $\bar{a}$-RP for colourings externally defined by $\psi_1(\bar{x}, \bar{y})$ and for colourings externally defined by $\psi_2(\bar{x}, \bar{y})$. Let $\chi_1 : \binom{M}{\bar{a}} \to 2$ be the colouring externally defined by $\bar{e} \in N \succeq M$ and $\psi_1(\bar{x}, \bar{y})$, and let $\chi_2 : \binom{M}{\bar{a}} \to 2$ be the colouring externally defined by $\bar{e} \in N \succeq M$ and $\psi_2(\bar{x}, \bar{y})$. Take $B \in \Age(M)$. Applying \Cref{Ramseyforage} to the set $\{\psi_2(\bar{x}, \bar{y})\}$, there is $C \in \Age(M)$ with $C \to_{\chi_2} (B)^{\bar{a}}_2$, i.e.\ for any copy $C'$ of $C$ in $M$, $C'$ contains a $\chi_2$-monochromatic copy of $B$. As $M$ has the $\bar{a}$-RP for $\chi_1$, there is a $\chi_1$-monochromatic copy $C' \sub M$ of $C$. $C'$ contains a $\chi_2$-monochromatic copy $B'$ of $B$, and so $B'$ is monochromatic in $\chi_1$ and $\chi_2$. Therefore $B'$ is monochromatic in $\chi$.
    \end{proof}

\subsection{Simplifications via quantifier elimination}

    \begin{defn}
        Let $M$ be a structure. Let $\bar{a} \in \Age(M)$.
			
        If $\chi : \binom{M}{\bar{a}} \to 2$ is externally defined by an atomic formula $\phi(\bar{x}, \bar{y})$, we say that $\chi$ is \emph{externally atomic-definable}.
    \end{defn}

    \begin{lem} \label{EDRPatomic}
        Let $M$ be an ultrahomogeneous structure. Suppose $\Th(M)$ has QE. Let $\bar{a} \in \Age(M)$. Suppose $M$ has the $\bar{a}$-RP for externally atomic-definable $2$-colourings. Then M has the $\bar{a}$-RP for all externally definable $2$-colourings.
    \end{lem}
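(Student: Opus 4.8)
The plan is to reduce immediately to \Cref{boolcomb} using quantifier elimination. Since $\Th(M)$ has QE, every formula $\phi(\bar x, \bar y)$ is equivalent modulo $\Th(M)$ to a quantifier-free formula, i.e.\ to a Boolean combination of atomic formulae with free variables among $\bar x, \bar y$ (allowing the degenerate cases $\top$ and $\bot$). Moreover, if $N \succeq M$ then $N \equiv M$, so $N \models \Th(M)$ and hence $N$ also satisfies $\forall \bar x\, \forall \bar y\, (\phi(\bar x, \bar y) \leftrightarrow \psi(\bar x, \bar y))$ for the quantifier-free $\psi$ provided by QE.

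First I would take an arbitrary externally definable $2$-colouring $\chi : \binom{M}{\bar a} \to 2$, witnessed by a tuple $\bar e \in N \succeq M$ and a formula $\phi(\bar x, \bar y)$. Replacing $\phi$ by a $\Th(M)$-equivalent quantifier-free formula $\psi(\bar x, \bar y)$ does not change the colouring: for every $\bar a' \in \binom{M}{\bar a}$ we have $N \models \phi(\bar e, \bar a') \Leftrightarrow N \models \psi(\bar e, \bar a')$ by the previous paragraph, so the same witnesses $\bar e$ and the formula $\psi$ externally define $\chi$. Thus every externally definable $2$-colouring of $\binom{M}{\bar a}$ is externally definable by a Boolean combination of atomic formulae.

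Next I would dispose of the two trivial cases: if $\psi$ is $\Th(M)$-equivalent to $\top$ (respectively $\bot$), then $\chi$ is the constant colouring with value $0$ (respectively $1$), and $M$ clearly has the $\bar a$-RP for constant colourings. Otherwise, let $\Phi$ be the set of all atomic formulae with free variables among $\bar x, \bar y$. By hypothesis, $M$ has the $\bar a$-RP for every $2$-colouring externally defined by a formula from $\Phi$ (these are precisely the externally atomic-definable colourings, up to the choice of which colour class the formula names, which is handled by $\neg$). Hence \Cref{boolcomb} applies and yields that $M$ has the $\bar a$-RP for every $2$-colouring externally defined by a Boolean combination of formulae from $\Phi$ — in particular for $\chi$.

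I do not expect a genuine obstacle here: all the real work has been carried out in \Cref{Ramseyforage} and \Cref{boolcomb}, and the role of this lemma is simply to package those results with QE. The only points needing a moment's care are verifying that the same parameter tuple $\bar e$ still works after replacing $\phi$ by its quantifier-free equivalent (immediate, since the equivalence holds in $N$) and separating off the constant colourings coming from $\top$ and $\bot$, which are not literally Boolean combinations of atomic formulae but are trivially Ramsey.
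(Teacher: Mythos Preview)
Your proposal is correct and follows exactly the paper's approach: the paper's proof is the single line ``Apply \Cref{boolcomb} to the set of atomic formulae,'' and you have simply spelled out the details that are implicit in that application (that QE lets one replace $\phi$ by an equivalent quantifier-free $\psi$, and that this equivalence transfers to any $N \succeq M$). Your side remark about $\top$ and $\bot$ is harmless but unnecessary, since $y_0 = y_0$ is already atomic and so $\top$ and $\bot$ are Boolean combinations of atomic formulae.
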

    \begin{proof}
        Apply \Cref{boolcomb} to the set of atomic formulae. 
    \end{proof}
    \begin{rem} \label{equalityflastrivial}
        Assume $M$ is ultrahomogeneous. We consider the case of equality. The $2$-colourings externally defined by the atomic formula $\phi(\bar{x}, \bar{y}) \equiv x_0 = x_1$ or by $\phi(\bar{x}, \bar{y}) \equiv y_0 = y_1$ are constant, so immediately we have that $M$ has the $\bar{a}$-RP for these colourings. However, it is possible for $M$ to not have the point-Ramsey property for $2$-colourings externally defined by the atomic formula $\phi(\bar{x}, \bar{y}) \equiv x = y$. For example, let $\mc{L}$ consist of a single unary predicate $R$, which we think of as the ``red points" of a set, and let $M$ be the \Fr limit of the class of finite $\mc{L}$-structures $D$ with $|R^D| \leq 2$, i.e.\ with at most two red points. Let $a \in M$ be a red point, and let $B \in \Age(M)$ consist of two red points. Then the $2$-colouring of $\binom{M}{a}$ externally defined by $a \in M$ and the formula $\phi(x, y) \equiv x = y$ has no monochromatic copy of $B$.

        However, if $\Age(M)$ has the strong JEP (the JEP with the additional condition that the images of the two embeddings are disjoint), then $M$ has the point-Ramsey property for $2$-colourings externally defined by $e \in M$ and $x = y$: any copy of $B$ which does not contain $e$ is monochromatic. Thus, if $\Age(M)$ has the strong JEP, $M$ automatically has the $\bar{a}$-RP for all $2$-colourings externally defined by atomic formulae involving equality.
    \end{rem}
	
    \begin{lem} \label{EDRPt}
        Let $M$ be an ultrahomogeneous $\mc{L}$-structure, where $\mc{L}$ has maximum arity $t \geq 2$. Suppose $\Th(M)$ has QE. Suppose $M$ has the externally definable $\bar{a}$-Ramsey property for all $\bar{a} \in \Age(M)$ with $|\bar{a}| < t$. Then $M$ has the externally definable Ramsey property.
    \end{lem}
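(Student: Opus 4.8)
The plan is to reduce an externally definable colouring of $\binom{M}{\bar{a}}$ for an arbitrarily large tuple $\bar{a}$ down to colourings of small tuples, using quantifier elimination to control the atomic formulae involved and the compactness-type result \Cref{Ramseyforage} to glue the small-tuple information back together. By \Cref{EDRPkiff2} it suffices to treat $2$-colourings, and by \Cref{EDRPatomic} (via \Cref{boolcomb}) it suffices to treat $2$-colourings externally defined by a single atomic formula $\phi(\bar{x}, \bar{y})$, where $|\bar{y}| = |\bar{a}| =: l$. Since $\mc{L}$ has maximum arity $t$, such an atomic formula $\phi(\bar{x}, \bar{y})$ — i.e.\ $R(\bar z)$ for some relation symbol $R$ of arity $\le t$ with $\bar z$ a tuple of variables drawn from $\bar x, \bar y$, or an equality — mentions at most $t$ of the variables among $\bar{y} = (y_0, \dots, y_{l-1})$. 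The equality cases $y_i = y_j$ give constant colourings (\Cref{equalityflastrivial}), and the cases $x_i = x_j$ likewise, so we may assume $\phi$ involves at most $t-1$ of the $y$-variables together with some of the $x$-variables (or it is $x_i = y_j$, which involves exactly one $y$-variable). Let $\bar{y}' = (y_{i_0}, \dots, y_{i_{m-1}})$, with $m \le t-1$ (if $m = 0$ the colouring is constant and we are done), be exactly the $y$-variables appearing in $\phi$, and let $\bar{a}' \in \Age(M)$ be the substructure of $\bar a$ on the corresponding coordinates $(a_{i_0}, \dots, a_{i_{m-1}})$; then $\phi(\bar x, \bar y')$ externally defines a $2$-colouring $\chi'$ of $\binom{M}{\bar{a}'}$, and $\chi$ is precisely the pullback of $\chi'$ along the restriction map $\bar b \mapsto (b_{i_0}, \dots, b_{i_{m-1}})$ sending $\binom{M}{\bar a} \to \binom{M}{\bar a'}$.

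Here is the one subtlety to address: the map $\binom{M}{\bar a} \to \binom{M}{\bar a'}$ need not be surjective, so monochromaticity for $\chi'$ must be phrased carefully. The key observation is that $\chi'(\bar b') $ for $\bar b' \in \binom{M}{\bar a'}$ depends only on which element of $\binom{M}{\bar a'}$ we plug in; and for a copy $B' \in \binom{M}{B}$ of $B$, the copies of $\bar a$ inside $B'$ restrict exactly to the copies of $\bar a'$ inside $B'$ (every embedding $\bar a' \hookrightarrow B'$ extends to an embedding $\bar a \hookrightarrow M$ whose image may leave $B'$, but for the purposes of evaluating $\chi$ on $\binom{B'}{\bar a}$ we only ever need embeddings $\bar a \hookrightarrow B'$ — and those do restrict to embeddings $\bar a' \hookrightarrow B'$, with $\chi(\bar b) = \chi'(\bar b|_{\bar y'})$). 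Hence a copy $B' \in \binom{M}{B}$ that is $\chi'$-monochromatic (meaning $\chi'$ is constant on $\binom{B'}{\bar a'}$) is automatically $\chi$-monochromatic. Since $|\bar a'| = m \le t-1 < t$, the hypothesis gives the externally definable $\bar{a}'$-Ramsey property of $M$, so for any $B \in \Age(M)$ there is $B' \in \binom{M}{B}$ with $\binom{B'}{\bar a'}$ $\chi'$-monochromatic, and we are done.

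Assembling: given $\bar a \in \Age(M)$ and $B \in \Age(M)$, we want $M \to_{\ext} (B)^{\bar a}_2$. Fix an externally definable $2$-colouring $\chi$ of $\binom{M}{\bar a}$. By \Cref{EDRPatomic} we may write $\chi$ as having the $\bar a$-RP provided $M$ has the $\bar a$-RP for externally atomic-definable $2$-colourings, so fix such a $\chi$ given by an atomic $\phi(\bar x, \bar y)$. Apply the reduction of the previous two paragraphs: either $\chi$ is constant (trivial), or it is the pullback of an externally atomic-definable $2$-colouring $\chi'$ of $\binom{M}{\bar a'}$ with $|\bar a'| < t$, and a $\chi'$-monochromatic copy of $B$ is $\chi$-monochromatic. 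Invoking the hypothesis for $\bar a'$ produces the required copy of $B$. The main obstacle — and the only place any real care is needed — is exactly the bookkeeping of the previous paragraph: making sure that "monochromatic for $\chi'$ on $\binom{B'}{\bar a'}$" genuinely transfers to "monochromatic for $\chi$ on $\binom{B'}{\bar a}$" despite the restriction map on copies of $\bar a$ not being surjective onto $\binom{M}{\bar a'}$, and handling the degenerate cases (equalities, relation symbols mentioning $\le 1$ of the $y$-variables) separately so that the arity bound $m \le t-1$ can actually be used.
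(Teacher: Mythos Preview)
Your approach is essentially the same as the paper's: reduce to externally atomic-definable $2$-colourings via \Cref{EDRPkiff2} and \Cref{EDRPatomic}, then observe that an atomic formula involves at most $t$ variables in total, so after restricting to the variables that actually occur, the induced colouring lives on tuples of length $< t$ and the hypothesis applies. Your discussion of the ``subtlety'' about the restriction map is correct (and a little more cautious than needed --- the paper just states the one-line identity $\chi(\bar a') = \chi_s(\bar a'_s)$).

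There is one small gap in your case analysis. After disposing of the equality cases $y_i = y_j$ and $x_i = x_j$, you assert ``we may assume $\phi$ involves at most $t-1$ of the $y$-variables together with some of the $x$-variables''. But a non-equality atomic formula $R(\bar z)$ could involve \emph{only} $y$-variables (up to $t$ of them) and no $x$-variables at all --- e.g.\ $R(y_0,\dots,y_{t-1})$ for $R$ of arity $t$. You have not covered this case. It is, however, trivial: since no external parameter appears, and every $\bar a' \in \binom{M}{\bar a}$ is isomorphic to $\bar a$, the truth value of $R$ on the relevant $y$-subtuple is the same for every copy, so the colouring is constant. The paper handles this uniformly by letting $u$ be the number of $x$-variables actually occurring and $v$ the number of $y$-variables, noting $u + v \le t$, and splitting into $u = 0$ (constant colouring) versus $u \ge 1$ (whence $v < t$). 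Adding that one sentence fixes your argument.

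(Minor remark: you mention \Cref{Ramseyforage} in your opening as a tool to ``glue the small-tuple information back together'', but you never use it directly --- nor does the paper. Its role is already packaged inside \Cref{boolcomb} and hence \Cref{EDRPatomic}.)
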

    \begin{proof}
        By \Cref{EDRPkiff2} and \Cref{EDRPatomic}, it suffices to consider externally atomic-definable $2$-colourings. Let $\bar{a} \in \Age(M)$, and let $\chi : \binom{M}{\bar{a}} \to 2$ be externally defined by some $\bar{e} \in N \succeq M$ and an atomic formula $\phi(\bar{x}, \bar{y})$. Take subsequences $(x_{r(i)}), (y_{s(i)})$ of $\bar{x}, \bar{y}$ respectively, consisting of the free variables that actually occur in $\phi(\bar{x}, \bar{y})$. Let $u$ be the length of the subsequence $(x_{r(i)})$, and let $v$ be the length of the subsequence $(y_{s(i)})$. 

        As $\phi(\bar{x}, \bar{y})$ is atomic, we have that $u + v \leq t$. (Note that here, as we have assumed $t \geq 2$, this is also true for the case of atomic formulae involving the equality relation.) If $u = 0$, then $\chi$ is a constant colouring, so certainly has a monochromatic copy of each $B \in \Age(M)$. 
        
        Now consider the case $u \geq 1$. We then have $v < t$. Let $\bar{e}_r = (e_{r(i)})_{i < u}$, $\bar{a}_s = (a_{s(i)})_{i < v}$, and let $\chi_s : \binom{M}{\bar{a}_s} \to 2$ be externally defined by $\bar{e}_r \in N \succeq M$ and $\phi((x_{r(i)}), (y_{s(i)}))$.

        Let $B \in \Age(M)$. Then, by assumption, $M$ contains a copy $B'$ of $B$ with $\binom{B'}{\bar{a}_s}$ monochromatic in $\chi_s$. But also for any $\bar{a}'_s \in \binom{M}{\bar{a}_s}$ and for any extension $\bar{a}'$ of $\bar{a}_s$ to a copy of $\bar{a}$, we have that $\chi(\bar{a}') = \chi^{}_s(\bar{a}'_s)$. So therefore $\binom{B}{\bar{a}}$ is monochromatic in $\chi$.
    \end{proof}
	
    We combine \Cref{EDRPkiff2,EDRPatomic,EDRPt} to obtain the following, which we shall frequently use in examples:
		
    \begin{prop} \label{EDRPsimplecheck}
        Let $M$ be an ultrahomogeneous $\mc{L}$-structure, where $\mc{L}$ has maximum arity $t \geq 2$. Suppose $\Th(M)$ has QE. Suppose, for $\bar{a} \in \Age(M)$ with $|\bar{a}| < t$, that $M$ has the $\bar{a}$-RP for externally atomic-definable $2$-colourings. Then $M$ has the externally definable Ramsey property.
    \end{prop}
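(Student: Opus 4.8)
The plan is to assemble the three preceding lemmas in the right order, being careful about which class of colourings each one upgrades. The target is the externally definable $\bar a$-Ramsey property for every $\bar a \in \Age(M)$, and \Cref{EDRPt} reduces this to establishing the externally definable $\bar a$-Ramsey property for all $\bar a$ with $|\bar a| < t$; so the real work is to bootstrap from the hypothesis (the $\bar a$-RP for externally \emph{atomic}-definable $2$-colourings, for short tuples $\bar a$) to the full externally definable $\bar a$-RP for those same short tuples.

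First I would fix $\bar a \in \Age(M)$ with $|\bar a| < t$. By hypothesis $M$ has the $\bar a$-RP for externally atomic-definable $2$-colourings, and since $\Th(M)$ has QE we may invoke \Cref{EDRPatomic} to conclude that $M$ has the $\bar a$-RP for \emph{all} externally definable $2$-colourings, i.e.\ $M \to_{\ext} (B)^{\bar a}_2$ for every $B \in \Age(M)$. Then \Cref{EDRPkiff2} promotes this from two colours to any $k \geq 2$, giving $M \to_{\ext} (B)^{\bar a}_k$ for all $k \in \N_+$ and all $B \in \Age(M)$; that is, $M$ has the externally definable $\bar a$-Ramsey property.

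Having carried this out for every $\bar a$ of length $< t$, I would then apply \Cref{EDRPt} -- whose hypotheses, namely that $\mc L$ has maximum arity $t \geq 2$, that $\Th(M)$ has QE, and that $M$ has the externally definable $\bar a$-Ramsey property for all $\bar a \in \Age(M)$ with $|\bar a| < t$, are now all verified -- to conclude that $M$ has the externally definable Ramsey property, as required.

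There is no substantial obstacle here beyond bookkeeping: the proposition is essentially a repackaging of \Cref{EDRPkiff2}, \Cref{EDRPatomic} and \Cref{EDRPt}. The one point deserving attention is that \Cref{EDRPt} requires the \emph{full} externally definable $\bar a$-RP for short tuples, not merely its atomic special case, so the reduction via \Cref{EDRPatomic} (using QE) and \Cref{EDRPkiff2} must be performed first, before \Cref{EDRPt} can be brought to bear.
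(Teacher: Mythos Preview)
Your proposal is correct and follows exactly the approach the paper takes: the paper's own proof is simply the sentence ``We combine \Cref{EDRPkiff2,EDRPatomic,EDRPt},'' and you have spelled out precisely how those three lemmas compose. Your remark that \Cref{EDRPatomic} and \Cref{EDRPkiff2} must be applied before \Cref{EDRPt} (since the latter needs the full externally definable $\bar a$-RP for short tuples) is the only bookkeeping point worth noting, and you have it right.
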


    (This resembles \cite[Proposition 3.17]{KLM}.)
 
\section{The fixed points on type spaces property (FPT)} \label{FPTsec}

    The below lemma is folklore, and its proof follows via a straightforward compactness argument.
 
    \begin{lem} \label{typeflow}
        Let $M$ be an $\mc{L}$-structure, and let $G = \Aut(M)$ with the pointwise convergence topology. Then $G$ acts continuously on the Stone spaces $S_n(M)$, with the action given by \[g \cdot p(\bar{x}) = \{\phi(\bar{x}, g\bar{m}) : \phi(\bar{x}, \bar{m}) \in p(\bar{x})\}.\] That is, $G \curvearrowright S_n(M)$ with the action defined above is a $G$-flow.
    \end{lem}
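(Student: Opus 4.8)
The plan is to verify, in order: (i) that the formula $g \cdot p(\bar x) = \{\phi(\bar x, g\bar m) : \phi(\bar x, \bar m) \in p(\bar x)\}$ really does produce a complete $n$-type over $M$; (ii) that it defines a (left) action of $G = \Aut(M)$ on $S_n(M)$; (iii) that $S_n(M)$ is a non-empty compact Hausdorff space; and (iv) that the action is continuous. Properties (ii)--(iv) are exactly what is needed for $G \curvearrowright S_n(M)$ to be a $G$-flow.

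For (i): given $p \in S_n(M)$ and $g \in G$, the key observation is that, since $g$ is an automorphism of $M$, for every $\mc{L}$-formula $\psi(\bar y)$ and every tuple $\bar m$ from $M$ we have $M \models \psi(\bar m)$ iff $M \models \psi(g\bar m)$. Applying this to finite conjunctions of instances of the partial type $\{\phi(\bar x, g\bar m) : \phi(\bar x, \bar m) \in p\}$ shows it is finitely satisfiable together with the elementary diagram of $M$, hence (by compactness) consistent; and applying the same equivalence to the completeness of $p$ -- writing an arbitrary parameter tuple as $g\bar m'$ and using that exactly one of $\phi(\bar x, \bar m')$, $\neg\phi(\bar x, \bar m')$ lies in $p$ -- shows $g\cdot p$ is complete. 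Thus $g \cdot p \in S_n(M)$. The action axioms $\id \cdot p = p$ and $g \cdot (h\cdot p) = (gh)\cdot p$ are then immediate from unwinding the definition.

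For (iii), $S_n(M)$ is a Stone space -- compact, Hausdorff, in fact totally disconnected -- by the standard analysis of type spaces over a fixed parameter set, and it is non-empty since, for instance, any $n$-tuple of elements of $M$ realises a complete type over $M$. For (iv), it suffices to show that the preimage under the action map $G \times S_n(M) \to S_n(M)$ of each basic clopen set $[\phi(\bar x, \bar m)] = \{q \in S_n(M) : \phi(\bar x, \bar m) \in q\}$ is open. Note that $g \cdot p \in [\phi(\bar x, \bar m)]$ iff $\phi(\bar x, g^{-1}\bar m) \in p$. Fix a pair $(g_0, p_0)$ in this preimage and set $\bar m_0 := g_0^{-1}\bar m$, so $\phi(\bar x, \bar m_0) \in p_0$. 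Then $U := \{g \in G : g\bar m_0 = \bar m\}$ is an open neighbourhood of $g_0$ in the pointwise-convergence topology (it constrains only the finitely many coordinates appearing in $\bar m_0$), and $[\phi(\bar x, \bar m_0)]$ is a clopen neighbourhood of $p_0$; for any $(g,p) \in U \times [\phi(\bar x, \bar m_0)]$ we have $\phi(\bar x, \bar m) = \phi(\bar x, g\bar m_0) \in g\cdot p$, so $U \times [\phi(\bar x, \bar m_0)]$ is contained in the preimage. Hence the preimage is open and the action is continuous.

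The only genuinely substantive point -- and the one I would be most careful to state precisely -- is the interaction between the automorphism $g$ and the parameters of the type in step (i): namely, that translating the parameters of a complete type by $g$ preserves consistency with the elementary diagram of $M$ and preserves completeness. This reduces cleanly to the defining property of $\Aut(M)$ via a finitary compactness argument; all remaining verifications (the group axioms, the Stone-space properties, and the openness of the relevant preimages) are routine.
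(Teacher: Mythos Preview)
Your proposal is correct and complete. The paper itself does not give a proof of this lemma: it simply remarks that the result is folklore and ``follows via a straightforward compactness argument.'' Your write-up supplies exactly such an argument (compactness appears in step~(i), where you show $g\cdot p$ is consistent with the elementary diagram of $M$), together with the routine verifications of the action axioms, the Stone-space properties, and continuity. There is nothing to compare strategically; you have filled in what the paper leaves to the reader, and your continuity argument via basic clopen sets and pointwise-stabiliser neighbourhoods is the standard one.
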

		
    Note that we define the action of $G$ on $\mc{L}(M)$-formulae as $g \cdot \phi(\bar{x}, \bar{m}) = \phi(\bar{x}, g\bar{m})$.		
    
    \begin{defn}
        Let $M$ be an $\mc{L}$-structure with automorphism group $G = \Aut(M)$. We say that $M$ has the \emph{fixed points on type spaces property} (FPT) if every subflow of $G \curvearrowright S_n(M)$, $n \geq 1$, has a fixed point.

        We also define a more restricted version of this property for particular $n$. Let $n \geq 1$. We say that $M$ has \emph{FPT$_n$} if every subflow of $G \curvearrowright S_n(M)$ has a fixed point.
    \end{defn}
    Note that FPT is equivalent to every orbit closure $\ov{G \cdot p(\bar{x})}$ in $S_n(M)$ having a fixed point.

    The following lemma will not be used in later sections, but we consider it to be of independent interest.

    \begin{lem} \label{FPTn+1impliesn}
        Let $M$ be a structure with $FPT_{n+1}$. Then $M$ has $FPT_n$.
    \end{lem}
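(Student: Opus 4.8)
The plan is to use the natural projection between type spaces. Write $G = \Aut(M)$, and let $\pi \colon S_{n+1}(M) \to S_n(M)$ be the restriction map sending an $(n+1)$-type in the variables $\bar{x} = (x_0, \dots, x_n)$ to its subtype in $(x_0, \dots, x_{n-1})$. The first step is to record three routine properties of $\pi$: it is continuous (the $\pi$-preimage of the basic clopen set of $S_n(M)$ determined by a formula $\phi(x_0, \dots, x_{n-1}, \bar{m})$ is precisely the clopen set of $S_{n+1}(M)$ determined by the same formula, now read in the larger variable tuple); it is $G$-equivariant (deleting a variable commutes with translating the parameters, by the description of the action in \Cref{typeflow}); and it is surjective (any $n$-type $p(x_0, \dots, x_{n-1})$ over $M$ extends to an $(n+1)$-type, e.g.\ since $p \cup \{x_n = x_0\}$ is finitely satisfiable in $M$).

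Now let $Y \sub S_n(M)$ be an arbitrary subflow; the goal is to produce a $G$-fixed point in $Y$. Set $Z := \pi^{-1}(Y) \sub S_{n+1}(M)$. Since $\pi$ is continuous and $Y$ is closed, $Z$ is closed in the compact Hausdorff space $S_{n+1}(M)$, hence compact; since $\pi$ is surjective and $Y \neq \emptyset$, we get $Z \neq \emptyset$; and since $\pi$ is $G$-equivariant and $Y$ is $G$-invariant, $Z$ is $G$-invariant (if $q \in Z$ then $\pi(gq) = g\pi(q) \in Y$). Thus $Z$ is a subflow of $G \curvearrowright S_{n+1}(M)$.

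Finally, apply the hypothesis: by $FPT_{n+1}$ the subflow $Z$ has a fixed point $q \in Z$. Then $\pi(q) \in Y$, and for every $g \in G$ we have $g \cdot \pi(q) = \pi(g \cdot q) = \pi(q)$ by equivariance, so $\pi(q)$ is a $G$-fixed point of $Y$. As $Y$ was an arbitrary subflow of $S_n(M)$, this shows $M$ has $FPT_n$.

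I do not expect any genuine obstacle here. The only point needing even a one-line argument is the surjectivity of $\pi$ — i.e.\ that every finitary type over $M$ is the restriction of a type in one further variable — and this is immediate by finite satisfiability as indicated above. Everything else is a formal manipulation with the equivariant projection $\pi$ and the definition of a subflow; in particular the statement is purely dynamical and does not require the countable-age hypothesis or the Ramsey-theoretic machinery of the earlier sections.
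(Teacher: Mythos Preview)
Your proof is correct and uses the same key idea as the paper: the continuous, surjective, $G$-equivariant restriction map $\pi \colon S_{n+1}(M) \to S_n(M)$. The paper's execution differs slightly in that it works with orbit closures rather than arbitrary subflows: given $p \in S_n(M)$, it lifts $p$ to some $\tilde{p}$, finds an invariant $\tilde{q} \in \ov{G\tilde{p}}$, and then argues via the identity $\pi(\ov{G\tilde{p}}) = \ov{\pi(G\tilde{p})} = \ov{Gp}$ (using that continuous images of closures equal closures of images when the domain is compact and the codomain Hausdorff). Your version, pulling back the whole subflow $Y$ to $Z = \pi^{-1}(Y)$ and projecting a fixed point of $Z$, avoids that closure computation entirely and is a little cleaner; the paper's route has the minor advantage of showing explicitly that the fixed point can be taken in the orbit closure of any prescribed type.
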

    \begin{proof}
        Let $\pi : S_{n+1}(M) \to S_n(M)$ be the restriction map to the first $n$ variables, i.e.\ for a type $p(\bar{x}, y) \in S_{n+1}(M)$, the type $\pi(p(\bar{x}, y))$ consists of the formulae of $p(\bar{x}, y)$ with free variables from $\bar{x}$. 
        Checking that $\pi(p(\bar{x}, y))$ is indeed a type is straightforward, and it is also immediate that $\pi$ is continuous and surjective.

        Let $G = \Aut(M)$. Let $p(\bar{x}) \in S_n(M)$. To show that $M$ has $FPT_n$, it suffices to show that $\ov{G \cdot p(\bar{x})}$ contains a $G$-invariant type. Take $\tilde{p}(\bar{x}, y) \in \pi^{-1}(p(\bar{x}))$. As $M$ has $FPT_{n+1}$, there exists a $G$-invariant type $\tilde{q}(\bar{x}, y) \in \ov{G \cdot \tilde{p}(\bar{x}, y)}$. Let $q(\bar{x}) = \pi(\tilde{q}(\bar{x}, y))$. Then we have that $q(\bar{x}) \in \pi(\ov{G \cdot \tilde{p}(\bar{x}, y)})$.

        Recall that for a continuous function $f : X \to Y$ where $X$ is compact and $Y$ is Hausdorff, for $A \sub X$ we have that $f(\ov{A}) = \ov{f(A)}$. 
        
        Therefore $\pi(\ov{G \cdot \tilde{p}(\bar{x}, y)}) = \ov{\pi(G \cdot \tilde{p}(\bar{x}, y))}$. It is easy to see that for $g \in G$ and $p'(\bar{x}, y) \in S_{n+1}(M)$, we have that $\pi(g \cdot p'(\bar{x}, y)) = g \cdot \pi(p'(\bar{x}, y))$. Hence $\ov{\pi(G \cdot \tilde{p}(\bar{x}, y))} = \ov{G \cdot p(\bar{x})}$. So $q(\bar{x}) \in \ov{G \cdot p(\bar{x})}$. As $\tilde{q}(\bar{x}, y)$ is $G$-invariant, so is $q(\bar{x})$, and thus we are done. 
    \end{proof}
		
\section{FPT and the externally definable Ramsey property} \label{mainthmsec}
    \begin{thm} \label{FPTiffEDRP}
        Let $M$ be an ultrahomogeneous structure.
			
        Then:
        \begin{enumerate}
            \item if $M$ has FPT, then $M$ has the externally definable Ramsey property;
            \item if $\Age(M)$ is countable, then $M$ has the externally definable Ramsey property iff $M$ has FPT.
        \end{enumerate}
    \end{thm}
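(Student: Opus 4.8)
The plan is to prove the two parts separately: part (1) holds for every ultrahomogeneous $M$, and part (2) adds the converse implication, which is where the countability of $\Age(M)$ is used.

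\textbf{Part (1).} Suppose $M$ has FPT, and fix $\bar a, B \in \Age(M)$, $k \in \N_+$, and an externally definable $k$-colouring $\chi$ of $\binom{M}{\bar a}$, witnessed by $\bar e \in N \succeq M$ and formulae $\phi_0(\bar x, \bar y), \dots, \phi_{k-1}(\bar x, \bar y)$; we may assume $|\bar e| \geq 1$ (pad $\bar e$ with an element of $M$ and add a dummy variable, or note that $\emptyset$-definable colourings of $\binom{M}{\bar a}$ are constant since $\binom{M}{\bar a}$ is a single $\Aut(M)$-orbit). Put $p(\bar x) = \tp_N(\bar e / M)$ and $G = \Aut(M)$. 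Applying FPT to the subflow $\overline{G \cdot p} \sub S_{|\bar e|}(M)$ gives a $G$-invariant type $q \in \overline{G \cdot p}$. First I would observe that for each $\bar a' \in \binom{M}{\bar a}$ the formula asserting that exactly one of $\phi_0(\bar x, \bar a'), \dots, \phi_{k-1}(\bar x, \bar a')$ holds lies in $p$, hence in every $G$-translate of $p$, hence — being definable by a clopen condition — in $q$; so $q$ induces a colouring $\chi_q \colon \binom{M}{\bar a} \to k$ by $\chi_q(\bar a') = i \iff \phi_i(\bar x, \bar a') \in q$. Since $q$ is $G$-invariant and $\binom{M}{\bar a}$ is a single $\Aut(M)$-orbit, $\chi_q$ is constant, with some value $i_0$. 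Finally, fixing a copy $B_0 \in \binom{M}{B}$, the set $\binom{B_0}{\bar a}$ is finite and $\phi_{i_0}(\bar x, \bar a') \in q$ for all $\bar a' \in \binom{B_0}{\bar a}$; as $q \in \overline{G \cdot p}$, some translate $g \cdot p$ agrees with $q$ on these finitely many formulae, which unwinds to $\chi(g^{-1}\bar a') = i_0$ for all $\bar a' \in \binom{B_0}{\bar a}$, so $g^{-1}B_0$ is a $\chi$-monochromatic copy of $B$. This yields the externally definable $\bar a$-RP for every $\bar a$, i.e.\ the externally definable Ramsey property.

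\textbf{Part (2).} Given (1), it remains to show that the externally definable Ramsey property together with countability of $\Age(M)$ implies FPT. Fix $n \geq 1$ and $p = \tp_N(\bar c / M) \in S_n(M)$; it suffices to find a $G$-invariant type in $\overline{G \cdot p}$. The organising principle is that $q \in S_n(M)$ is $G$-invariant iff for every formula $\phi(\bar x, \bar y)$ and every $\bar a \in \Age(M)$ with $|\bar a| = |\bar y|$, the colouring $\bar a' \mapsto [\phi(\bar x, \bar a') \in q]$ of $\binom{M}{\bar a}$ is constant (using ultrahomogeneity, so that orbits of tuples correspond to elements of $\Age(M)$). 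The key lemma I would establish is a \emph{combining} statement: for every finite set $\Delta$ of formulae and every finite $B_0 \sub M$ there is $g_{\Delta, B_0} \in \Aut(M)$ such that for every $\phi \in \Delta$ and every $\bar a \in \Age(M)$ of the appropriate length embedding into $B_0$, the colouring $\bar a' \mapsto [\phi(\bar x, \bar a') \in g_{\Delta, B_0} \cdot p]$ is constant on $\binom{B_0}{\bar a}$. I would prove this by listing the finitely many relevant pairs $(\psi_1, \bar a_1), \dots, (\psi_l, \bar a_l)$, building a chain $B_0 = D_0 \sub D_1 \sub \cdots \sub D_l$ in $\Age(M)$ with $D_j \to_{\ext(\{\psi_j\})} (D_{j-1})^{\bar a_j}_2$ — this is exactly where \Cref{Ramseyforage} enters, crucially for the \emph{finite} (here singleton) set of formulae $\{\psi_j\}$, the classical compactness principle \Cref{classicalRamseyiffage} being unavailable — then taking a copy of $D_l$ in $M$ and iterating the externally definable Ramsey arrows downwards through it to extract a copy $B_0'$ of $B_0$ that is simultaneously monochromatic for all the colourings $\chi_{p, \psi_j}$, and finally pulling back by an automorphism sending $B_0'$ to $B_0$ (invoking \Cref{shiftedcbyaut}).

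Given the combining lemma, I would index by the directed set of pairs $(\Delta, B_0)$ (ordered by inclusion), pass to a convergent subnet of $(g_{\Delta, B_0} \cdot p)$ with limit $q$, note $q \in \overline{G \cdot p}$, and check $G$-invariance of $q$ pointwise: given $\phi$ and $\bar m, \bar m'$ in the same $\Aut(M)$-orbit, every sufficiently large $g_{\Delta, B_0}$ (namely once $\bar m \cup \bar m' \sub B_0$ and $\phi \in \Delta$) decides $\phi$ identically on $\bar m$ and $\bar m'$, so the clopen set where this holds contains a tail of the net and therefore $q$. The main obstacle is precisely the combining lemma: the externally definable Ramsey property only interacts well with finitely many externally defining formulae at once via \Cref{Ramseyforage}, so the iterated-Ramsey construction must be arranged so that at each stage only a single externally defining formula is in play, and the downward extraction of the monochromatic copy must be sequenced so that monochromaticity achieved at one step is not destroyed at later ones. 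I would also expect to need some routine bookkeeping for tuples with repeated entries when identifying orbits with elements of $\Age(M)$, and to use the countability of $\Age(M)$ (together with that of the language) to keep the data involved — formulae and finite substructures up to isomorphism — organised enough to assemble the limiting type.
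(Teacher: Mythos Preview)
Your approach to Part~(1) is correct and essentially the same as the paper's: the paper first reduces to $k=2$ via \Cref{EDRPkiff2}, while you handle general $k$ directly, but the core argument --- realise $\bar e$ as a type $p$, take a $G$-invariant $q\in\ov{Gp}$, use invariance plus transitivity on $\binom{M}{\bar a}$ to see $q$ assigns one colour, then use $q\in\ov{Gp}$ to find a translate matching $q$ on $\binom{B_0}{\bar a}$ --- is identical.

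For Part~(2) your route is correct but genuinely different. The paper enumerates all pairs $(\phi,\bar a)$ as a sequence $(\chi_i)_{i<\omega}$ --- this is exactly where countability of $\Age(M)$ and of $\mc{L}$ are invoked --- proves your combining lemma as its Claim~1 via the same iterated use of \Cref{Ramseyforage}, and then applies K\H{o}nig's lemma (Claim~2) on a binary tree to select a coherent sequence of colours $(\lambda_r)_{r<\omega}$, from which $q$ is written down explicitly and checked by hand to be a complete type lying in $\ov{Gp}$. You replace the K\H{o}nig step by subnet compactness in $S_n(M)$: extract $q$ as a cluster point of the translates $g_{\Delta,B_0}\cdot p$ indexed by the directed set of pairs $(\Delta,B_0)$, and read off $G$-invariance from the fact that the net is eventually in each clopen set $\{r:\phi(\bar x,\bar m)\in r\iff\phi(\bar x,\bar m')\in r\}$.

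One point worth flagging: as you have sketched it, your argument does not seem to use countability of $\Age(M)$ anywhere. The combining lemma only concerns the finitely many isomorphism types of tuples realised inside the fixed finite $B_0$, \Cref{Ramseyforage} has no countability hypothesis, and the subnet extraction uses only compactness of $S_n(M)$. If this survives scrutiny, you would have shown that the externally definable Ramsey property implies FPT for \emph{every} ultrahomogeneous $M$, answering negatively one of the open questions in \Cref{furtherqnssec}. You say you ``expect'' to need countability, but you should examine whether you actually do; the paper's K\H{o}nig-based route genuinely requires it, but your net argument apparently does not. The only places demanding care are the bookkeeping for tuples with repetitions (handled by substitution to reduce to injective tuples) and the passage from ``eventually in a clopen set'' to ``subnet limits lie in it'' --- both routine.
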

    \begin{proof}
        We write $G = \Aut(M)$.
        
        (1): Let $\bar{a} \in \Age(M)$. We may assume $\bar{a} \sub M$. By \Cref{EDRPkiff2}, it suffices to show the $\bar{a}$-RP for externally definable $2$-colourings. Let $\chi : \binom{M}{\bar{a}} \to 2$ be externally defined by $\bar{e} \in N \succeq M$ and $\phi(\bar{x}, \bar{y})$, where $\phi(\bar{x}, \bar{y})$ has free variables among $\bar{x}, \bar{y}$, but including all of $\bar{x}$. We show that for $B \in \Age(M)$, there exists a $\chi$-monochromatic copy $B' \sub M$ of $B$.
			
        Let $p(\bar{x}) = \tp_N(\bar{e}/M)$. As $M$ has FPT, there is a $G$-invariant type $q(\bar{x}) \in \ov{Gp(\bar{x})}$. Thus:
			
        \begin{enumerate}
            \item[(i.)] for all $g \in G$ and for all $f(\bar{x}, \bar{m}) \in q(\bar{x})$, we have $f(\bar{x}, g\bar{m}) \in q(\bar{x})$;
            \item[(ii.)] for all $f(\bar{x}, \bar{m}) \in q(\bar{x})$, there is $g \in G$ such that $f(\bar{x}, g\bar{m}) \in p(\bar{x})$.
        \end{enumerate}
			
        Let $\phi_0(\bar{x}, \bar{y}) = \phi(\bar{x}, \bar{y})$ and $\phi_1(\bar{x}, \bar{y}) = \neg\phi(\bar{x}, \bar{y})$. There is $i \in \{0, 1\}$ with $\phi_i(\bar{x}, \bar{a}) \in q(\bar{x})$. By (i), using the ultrahomogeneity of $M$, for $\bar{a}' \in \binom{M}{\bar{a}}$, we have $\phi_i(\bar{x}, \bar{a}') \in q(\bar{x})$. Take $B \fin M$. Then $\bigwedge_{\, \bar{a}' \in \binom{B}{\bar{a}}} \phi_i(\bar{x}, \bar{a}') \in q(\bar{x})$, and so by (ii.)\ there is $g \in G$ such that $\bigwedge_{\, \bar{a}' \in \binom{B}{\bar{a}}} \phi_i(\bar{x}, g\bar{a}') \in p(\bar{x})$. Thus $\binom{gB}{\bar{a}}$ is monochromatic in $\chi$.

        (2): Take $p(\bar{x}) \in S_n(M)$. We will show that there is a $G$-invariant type $q(\bar{x}) \in \ov{Gp(\bar{x})}$.
        
        Let $N \succeq M$ and $\bar{e} \in N$ be such that $\tp_N(\bar{e}/M) = p(\bar{x})$. Throughout this proof, when we specify externally defined colourings, the variables $\bar{x}$ will correspond to $\bar{e}$.
        
        First, note that for any $\mc{L}$-formula $\phi(\bar{z})$, we may partition $\bar{z}$ into $\bar{z} \cap \bar{x}$ and $\bar{y} = \bar{z} \setminus \bar{x}$, and thereby consider $\phi(\bar{z})$ as a formula $\phi(\bar{x}, \bar{y})$. For each $\mc{L}$-formula $\phi(\bar{x}, \bar{y})$ and each $\bar{a} \in \Age(M)$ with $|\bar{a}| = |\bar{y}|$, let $\chi_{\phi, \bar{a}} : \binom{M}{\bar{a}} \to 2$ denote the colouring externally defined by $\bar{e} \in N \succeq M$ and $\phi(\bar{x}, \bar{y})$. Note that for $\bar{a}, \bar{b} \in \Age(M)$ with $\bar{a} \cong \bar{b}$, $\chi_{\phi, \bar{a}} = \chi_{\phi, \bar{b}}$.
			
        As $\mc{L}$ and $\Age(M)$ are countable, there are countably many $\chi_{\phi, \bar{a}}$, which we enumerate as $\chi_i = \chi_{\phi_i, \bar{a}_i}$, $i \in \N$.
			
        \textbf{Claim 1:} Let $B \in \Age(M)$, $r \in \N$. Then there exists a copy $B' \sub M$ of $B$ such that $B'$ is $\chi_i$-monochromatic for all $i < r$.
			
        \begin{subproof}[Proof of claim:]
            As $M$ has externally definable $\bar{a}_0$-Ramsey for $\chi_0$, by \Cref{Ramseyforage} there is $C_1 \in \Age(M)$ with $C_1 \to_{\chi_0} (B)^{\bar{a}_0}_2$. Similarly, by induction we may take $C_i \in \Age(M)$, $1 \leq i \leq r$, such that $C_i \to_{\chi_{i-1}} (C_{i-1})^{\bar{a}_{i-1}}_2$, and we may take $C_r \sub M$. Then $C_r$ contains a copy $B'$ of $B$ with $\binom{B'}{\bar{a}_i}$ monochromatic in $\chi_i$ for $0 \leq i < r$.  
        \end{subproof}

        \textbf{Claim 2:} There exist colours $(\lambda_r)_{r < \omega}$ such that, for $B \in \Age(M)$, $r < \omega$, there exists a copy $B' \sub M$ of $B$ with $B'$ having $\chi_i$-colour $\lambda_i$ for $i < r$.
   
        \begin{subproof}[Proof of claim:]
            Use JEP and EP to find an increasing chain $B_0 \sub B_1 \sub \cdots$ of finite substructures of $M$ such that each $B \in \Age(M)$ embeds in some $B_r$. We define a tree $(T, \leq)$: for $r < \omega$ and $\rho \in 2^r$, add $(B_r, \rho)$ to $T$ if there exists a copy $B'_r \sub M$ of $B_r$ with $B'_r$ having $\chi_i$-colour $\rho_i$ for $i < r$, and define $(B_r, \rho) \leq (B_s, \sigma)$ if $r \leq s$ and $\rho$ is an initial segment of $\sigma$. By Claim 1 and K\H{o}nig's lemma, the tree $T$ has an infinite branch.
        \end{subproof} 
			
        Let ${(\lambda_r)}_{r < \omega}$ be given by the above Claim 2. Define $q(\bar{x})$ as follows. For $r < \omega$, let \[\psi_r(\bar{x}, \bar{y}) = 
        \begin{cases}
            \phi_r(\bar{x}, \bar{y}) & \lambda_r = 0 \\
            \neg\phi_r(\bar{x}, \bar{y}) & \lambda_r = 1
        \end{cases}\]
        and add $\psi_r(\bar{x}, \bar{a}'_r)$ to $q(\bar{x})$ for all $\bar{a}'_r \in \binom{M}{\bar{a}_r}$.
        It is immediate that $q(\bar{x})$ is $G$-invariant. To show that $q(\bar{x})$ is a complete type, it suffices to show that $q(\bar{x}) \cup \Th_{\mc{L}(M)}(M)$ is consistent. Take $s, t \in \N$. For $i < s, j < t$, take $\bar{a}_{i, j} \in \binom{M}{\bar{a}_i}$. We show that \[\{\psi_i(\bar{x}, \bar{a}_{i, j}) : i < s, j < t\} \cup \Th_{\mc{L}(M)}(M)\] is consistent, and this suffices by compactness. Take finite $B \sub M$ with $\bar{a}_{i, j} \sub B$ for $i < s, j < t$. Then, by Claim 2, there is $B' \sub M$ and an isomorphism $f : B \to B'$ with $\chi_i |_{B'} = \{\lambda_i\} \text{ for } i < s$. Using ultrahomogeneity of $M$, there is $g \in \Aut(M)$ extending $f$. As $\chi_i |_{B'} = \{\lambda_i\}$ for $i < s$, we have $N \models \psi_i(\bar{e}, \bar{a}'_i) \text{ for } i < s \text{ and } \bar{a}'_i \in \binom{B'}{\bar{a}_i}$, and hence $N \models \psi_i (\bar{e}, g\bar{a}_{i, j}) \text{ for } i < s, j < t$, which gives the desired consistency.

        It remains to show that $q(\bar{x}) \in \ov{Gp(\bar{x})}$: this is equivalent to the statement that for $f(\bar{x}, \bar{m}) \in q(\bar{x})$, there is $g \in G$ such that $f(\bar{x}, g\bar{m}) \in p(\bar{x})$, and this follows immediately from Claim 2 and the definition of $q(\bar{x})$. 
    \end{proof}

    We note that, for the case of $\aleph_0$-saturated \Fr structures, \Cref{FPTiffEDRP} can be deduced from \cite[Theorem 3.14]{KLM} (personal communication, K.\ Krupiński). The proof of \Cref{FPTiffEDRP} given here uses elementary techniques, and applies to \Fr structures generally (or indeed ultrahomogeneous structures with countable age). Several of the examples in the following section are not $\aleph_0$-saturated, so \cite[Theorem 3.14]{KLM} cannot be applied, and here \Cref{FPTiffEDRP} becomes relevant. In \Cref{infinite products}, we give a general recipe for constructing such examples.

\section{Examples} \label{exsec}

    We now consider a range of examples. We evaluate whether each example has the externally definable Ramsey property. We shall frequently use \Cref{EDRPsimplecheck} to verify the externally definable Ramsey property.

    The salient properties of the examples in this section are summarised in the below table.

    \medskip

    \begin{adjustbox}{center}
        \begin{tabular}{|l|c|c|c|c|c|c|c|}
            \hline
            Structure & EDRP & FPT & RP $< t$ & homog.\ & QE & $\aleph_0$-cat.\ & $\aleph_0$-sat.\ \\
            \hline
            random graph & Y & Y & Y & Y & Y & Y & Y \\
            equiv.\ rel.\ $M_{\omega, j}$, $j < \omega$ & Y & Y & N & Y & Y & Y & Y \\
            $S(2)$ & N & N & N & Y & Y & Y & Y \\
            sat.\ rainbow graph & Y & Y & Y & Y & Y & N & Y \\
            unsat.\ rainbow graph & Y & Y & Y & Y & Y & N & N \\
            sat.\ inf.\ ref.\ eq.\ rels & Y & Y & Y & Y & Y & N & Y \\
            unsat.\ inf.\ ref.\ eq.\ rels & Y & Y & Y & Y & Y & N & N \\
            $2$ disj.\ random graphs & Y & N & Y & N & N & Y & Y \\
            \hline
        \end{tabular}
    \end{adjustbox}
    \hfill \break
    \textbf{Key:} 
    \begin{description}
        \item[EDRP] externally definable Ramsey property; 
        \item[RP $< t$] $\bar{a}$-Ramsey property for $|\bar{a}| < t$, where $t$ is the maximum arity of $\mc{L}$; 
        \item [homog.] ultrahomogeneous.
    \end{description}
    
    \subsection{Finite binary languages: the point-Ramsey property}
        \begin{prop}
            Let $\mc{L}$ be a language consisting of finitely many binary relation symbols. Let $M$ be a \Fr $\mc{L}$-structure with the point-Ramsey property. Then $M$ has the externally definable Ramsey property.
        \end{prop}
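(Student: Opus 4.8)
The plan is to reduce everything to the machinery of \Cref{EDRPsimplecheck}: since $M$ is a \Fr structure in a finite relational language, \Cref{finLQE} tells us $\Th(M)$ has QE, so it suffices to verify, for each $\bar{a} \in \Age(M)$ with $|\bar{a}| < t$ (here $t = 2$, the maximum arity), that $M$ has the $\bar{a}$-RP for externally atomic-definable $2$-colourings. The only case to treat is therefore $|\bar{a}| = 1$, i.e.\ $\bar{a}$ is a single point $a$; write $B \in \Age(M)$ for the target structure. So the whole proposition comes down to handling externally atomic-definable $2$-colourings of $\binom{M}{a}$, which we know to be the point-Ramsey-type colourings, and we must leverage the hypothesis that $M$ has the (classical) point-Ramsey property.

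First I would enumerate the possible atomic formulas $\phi(\bar{x}, \bar{y})$ externally defining such a colouring. Since $\mc{L}$ is binary and $|\bar{y}| = |\bar{a}| = 1$, the atomic formula involves at most two variables, drawn from $\bar{x}$ and the single variable $y$. The cases are: (i) $\phi$ uses no variable from $\bar{x}$ — then $\chi$ is constant on $\binom{M}{a}$ (as in \Cref{equalityflastrivial}) and we are trivially done; (ii) $\phi$ is $x = y$ for some variable $x$ in $\bar{x}$, or $R(x,y)$, $R(y,x)$, $R(x_i,x_j)$ (the last again giving a constant colouring), or $R(y,y)$/$R(x,x)$ (constant); the genuinely interesting cases are $\phi(\bar{x},y) \equiv R(x, y)$, $\phi(\bar{x},y) \equiv R(y, x)$ for a binary relation symbol $R$ and some coordinate $x$ of $\bar{x}$, and possibly $\phi(\bar{x},y) \equiv x = y$. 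For the relational cases: fixing $\bar e \in N \succeq M$, the colour of a point $a' \in \binom{M}{a}$ is determined by whether $N \models R(e, a')$ (or $R(a', e)$), where $e$ is the relevant coordinate of $\bar e$. This is exactly a $2$-colouring of the points of $M$ — and since $M$ has the point-Ramsey property, $M$ contains a monochromatic copy of $B$, which is what we need. The equality case $\phi \equiv x = y$ is the subtle one already flagged in \Cref{equalityflastrivial}: a colouring of $\binom{M}{a}$ where a single distinguished point $e$ (if $e \in M$) gets colour $0$ and all else colour $1$. Here I would invoke the observation in \Cref{equalityflastrivial}: provided $\Age(M)$ has the strong JEP this is harmless, but in general I should argue directly — actually, I'd reduce it to the point-Ramsey property as well, since the colouring "is $a'$ equal to $e$?" is just a $2$-colouring of the points of $M$ (with one class a singleton or empty), so a point-Ramsey-monochromatic copy of $B$ (necessarily in the large colour class, as long as $|B| \geq 1$ which it is, or trivially if $|B| = 1$) works.

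The main obstacle — and the only place requiring care — is the bookkeeping that an externally atomic-definable $2$-colouring of $\binom{M}{a}$ really does coincide with (or is refined by) an honest $2$-colouring of the vertex set of $M$, and that the point-Ramsey property of $M$, which is stated for colourings of $\binom{M}{\bar{a}}$ with $|\bar{a}|=1$ (equivalently colourings of vertices), applies verbatim. In the relational cases $R(x,y)$ etc.\ this is transparent: the map $a' \mapsto \chi(a')$ factors through $a' \mapsto [\,N \models R(e,a')\,]$, a function of the single vertex $a'$. So the proof is essentially a short case analysis closing each case by direct appeal to the point-Ramsey property of $M$ (via \Cref{EDRPsimplecheck} to package it), with the equality case handled by the remark immediately preceding. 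I do not anticipate needing any deeper input.
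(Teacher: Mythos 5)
Your proposal is correct and follows essentially the same route as the paper: quantifier elimination via \Cref{finLQE}, reduction via \Cref{EDRPsimplecheck} to externally atomic-definable $2$-colourings of $\binom{M}{a}$, and the observation that these form a subset of all $2$-colourings of points, so the point-Ramsey hypothesis applies directly. The case analysis of atomic formulas (including the separate treatment of equality) is unnecessary, since the point-Ramsey property already covers \emph{every} $2$-colouring of $\binom{M}{a}$, which is the one-line form of the argument the paper gives.
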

        \begin{proof}
            By \Cref{finLQE}, $\Th(M)$ has QE. Thus, by \Cref{EDRPsimplecheck}, we need only check the point-Ramsey property for $2$-colourings of the points of $M$ which are externally defined by atomic formulae, and these form a subset of all $2$-colourings of points of $M$. As $M$ has the point-Ramsey property, we therefore have the externally definable Ramsey property.
	\end{proof}
        \begin{prop}
            The random graph has the point-Ramsey property.
        \end{prop}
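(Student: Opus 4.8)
The plan is to reduce the claim, via \Cref{ageindivpointRamsey}, to the age-indivisibility of the random graph $R$, and to obtain the latter from the classical pigeonhole property of $R$.

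First I would note that, being ultrahomogeneous, $R$ is vertex-transitive: any two one-element substructures are conjugate under $\Aut(R)$. Hence $R$ is a countable transitive structure, so by \Cref{ageindivpointRamsey} it suffices to prove that $R$ is age-indivisible, and by \Cref{pigeonindivlem} it is in fact enough to prove that $R$ has the pigeonhole property.

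So let $R = R_1 \cup R_2$ be a partition of the vertex set. If one of the parts is finite, the other is cofinite in $R$, hence still satisfies the extension property, hence (by \Frthm) is isomorphic to $R$; so we may assume both $R_1$ and $R_2$ are infinite. Suppose for contradiction that neither $R_1$ nor $R_2$ is isomorphic to $R$. By \Frthm each $R_i$ then fails the extension property, so there are finite disjoint sets $U_i, V_i \subseteq R_i$ such that no vertex of $R_i$ is adjacent to every vertex of $U_i$ and to no vertex of $V_i$. Since $R_1 \cap R_2 = \emptyset$, the sets $U_1 \cup U_2$ and $V_1 \cup V_2$ are finite and disjoint, so the extension property of $R$ yields a vertex $z \in R$ adjacent to every vertex of $U_1 \cup U_2$ and to no vertex of $V_1 \cup V_2$. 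But $z$ lies in $R_1$ or in $R_2$, and in either case $z$ contradicts the choice of $U_i, V_i$. Hence $R$ has the pigeonhole property, and therefore, by \Cref{pigeonindivlem} and \Cref{ageindivpointRamsey}, the point-Ramsey property.

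There is no genuine obstacle here beyond the standard extension-property argument above; the only points requiring a little care are the reduction to the case where both parts are infinite (using that deleting finitely many vertices from $R$ leaves a copy of $R$) and the check that $R_1 \cap R_2 = \emptyset$ makes $U_1 \cup U_2$ and $V_1 \cup V_2$ disjoint.
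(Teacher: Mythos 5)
Your proof is correct and follows essentially the same route as the paper: both establish the pigeonhole property of the random graph via the witness-property/no-witness contradiction argument (the paper cites Cameron for exactly this), and then pass to the point-Ramsey property through \Cref{pigeonindivlem} and \Cref{ageindivpointRamsey}. Your separate treatment of the case where one part is finite is harmless but unnecessary, since the witness-property argument already covers it.
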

	\begin{proof}
            Denote the random graph by $M$. We show that $M$ has the pigeonhole property (see \Cref{pigeonindivdef}), a stronger property than the point-Ramsey property. The following is from \cite[Proposition 1.3.3]{Cam97} -- we include it due to its brevity. Recall that the random graph has the \emph{witness property}:
		\begin{center}
                $(\ast)$ for finite disjoint $U, V \sub M$, there exists $m \in M \setminus (U \cup V)$ such that $m$ is adjacent to every vertex of $U$ and no vertex of $V$.
            \end{center}
            Also, any countable graph with the witness property is isomorphic to the random graph by a back-and-forth argument.
				
            Suppose, for a contradiction, that $M$ has a partition $M = M_1 \cup M_2$ with $M_1, M_2 \not\cong M$. Then, for $i = 1, 2$, there exist $U_i, V_i \sub M_i$ with no witness in $M_i$, i.e.\ there is no $m \in M_i$ with $m$ adjacent to all of $U_i$ and none of $V_i$. But then $U_1 \cup U_2, V_1 \cup V_2$ have no witness in $M$, contradiction.
	\end{proof}
        One can also quickly prove the point-Ramsey property for the random graph via the lexicographic product: see \cite[Theorem 12.1]{Pro13}.
		
        There are many other \Fr structures in finite binary relational languages with the point-Ramsey property, and therefore the externally definable Ramsey property. For example:
        \begin{itemize}
            \item the random tournament: this structure has the pigeonhole property by an argument as for the random graph, and we can also prove the point-Ramsey property via the lexicographic product;
            \item the $K_n$-free random graph ($n \geq 3$): this follows via Folkman's theorem, the proof of which adapts the lexicographic product proof for the random graph (\cite{Fol70}; a clear exposition following \cite{Kom86} is \cite[Theorem 12.3]{Pro13}); alternatively, we may use the \NR theorem as in \Cref{freeamalgexs};
            \item the random poset: for $(B, \leq)$ a finite poset, the map $b \mapsto \{b' \in B : b' \leq b\}$ is an embedding $(B, \leq) \to (\mc{P}(B), \sub)$. A straightforward application of the Hales-Jewett theorem for the alphabet $\{0, 1\}$ and $|B|$-parameter sets gives the point-Ramsey property (see, for example, \cite[Proposition 4.2.5]{Pro13}).
        \end{itemize}

    \subsection{Equivalence relations} \label{singleeqrel}
        Let $i, j \in \N \cup \{\omega\}$, with at least one of $i, j$ equal to $\omega$. Let $\mc{L} = \{\sim\}$ with $\sim$ binary. Let $M_{i, j}$ be an $\mc{L}$-structure with countably infinite domain, where $\sim^M$ is an equivalence relation with $i$ equivalence classes of size $j$. Each $M_{i, j}$ is ultrahomogeneous, and so by \Cref{finLQE}, $\Th(M_{i, j})$ has QE.

        \begin{prop}
            Let $i, j \geq 2$.
            \begin{enumerate}
                \item If $i = \omega$, $j < \omega$, then $M_{i, j}$ has the externally definable Ramsey property but does not have the point-Ramsey property.
                \item If $i < \omega, j = \omega$, then $M_{i, j}$ does not have the externally definable Ramsey property (and hence does not have the point-Ramsey property).
                \item If $i = j = \omega$, then $M_{i, j}$ has the point-Ramsey property, and hence the externally definable Ramsey property.
            \end{enumerate}
        \end{prop}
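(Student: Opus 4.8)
The plan is to reduce all three parts to \Cref{EDRPsimplecheck}. Since $\mc L=\{\sim\}$ has maximum arity $t=2$ and $\Th(M_{i,j})$ has QE (as noted above), \Cref{EDRPsimplecheck} says that $M_{i,j}$ has the externally definable Ramsey property as soon as it has the point-Ramsey property for every externally atomic-definable $2$-colouring of its points. In a language with a single binary $\sim$, an atomic formula $\phi(\bar x,y)$ in which $y$ occurs uses at most one variable from $\bar x$, so up to trivialities $\phi$ is $x\sim y$ or $x=y$: the colourings externally defined by $y\sim y$ or $y=y$ are constant, and since $\Age(M_{\omega,j})$ and $\Age(M_{\omega,\omega})$ have the strong JEP, \Cref{equalityflastrivial} disposes of the equality formulas in parts (1) and (3). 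The essential case is therefore a colouring $\chi$ externally defined by some $e\in N\succeq M_{i,j}$ via $x\sim y$. As ``$\sim$ is an equivalence relation'' is first order, $\sim$ is an equivalence relation of $N$, and either $N\models e\sim m_0$ for some $m_0\in M$ --- whence by transitivity $\chi(a)=0\iff a\sim m_0$, i.e.\ $\chi$ is the indicator of the $\sim$-class $C_0$ of $m_0$ --- or $e$ lies in a $\sim$-class disjoint from $M$ and $\chi$ is constant. Thus the only nontrivial externally atomic-definable point-colourings of $M_{i,j}$ are indicators of single $\sim$-classes.

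For part (3), $M_{\omega,\omega}$ is transitive, so by \Cref{ageindivpointRamsey} it suffices to show age-indivisibility; in fact $M_{\omega,\omega}$ is indivisible: given $M=M_1\cup M_2$, for each of the $\omega$ classes $C$ at least one of $C\cap M_1$, $C\cap M_2$ is infinite, so infinitely many classes have $C\cap M_1$ (say) infinite, and the union of $\omega$ of these is a copy of $M_{\omega,\omega}$ inside $M_1$. Hence (using \Cref{pigeonindivlem}) $M_{\omega,\omega}$ has the point-Ramsey property, and then the externally definable Ramsey property follows from \Cref{EDRPsimplecheck}. For the positive half of part (1), by the first paragraph it remains only to handle $\chi=\mathbf 1_{C_0}$ with $C_0$ a $\sim$-class of size $j$; but $M_{\omega,j}\setminus C_0$ consists of $\omega$ classes of size $j$, hence is isomorphic to $M_{\omega,j}$, so every $B\in\Age(M_{\omega,j})$ has a copy inside it, which is monochromatic for $\chi$.

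For the failure of the point-Ramsey property in part (1), take $B\in\Age(M_{\omega,j})$ to be a single class of size $j$, so that $\binom{M}{B}$ is exactly the set of $\sim$-classes; $2$-colour $M$ by choosing in each class one red point and $j-1\geq 1$ blue points. Since $j\geq 2$ no class is monochromatic, so there is no monochromatic copy of $B$. For part (2), let $B\in\Age(M_{i,\omega})$ consist of $i$ pairwise $\sim$-inequivalent points, and let $\chi=\mathbf 1_{C_0}$ for a $\sim$-class $C_0$ --- this is externally definable with $N=M$, parameter $e\in C_0$, and formula $x\sim y$. A $\chi$-monochromatic copy of $B$ must lie either inside $C_0$ (impossible: $C_0$ is one class, while $B$ has $i\geq 2$ classes) or inside $M\setminus C_0\cong M_{i-1,\omega}$ (impossible: $B$ needs $i$ distinct classes, only $i-1$ are available). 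So $M_{i,\omega}$ fails the externally definable $\bar a$-Ramsey property for $|\bar a|=1$, hence the externally definable Ramsey property; and since $\chi$ is already a colouring of the points of $M$, it likewise witnesses the failure of the point-Ramsey property.

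The only step needing real care is the contrast within part (1): one must check that the \emph{restricted} family of externally atomic-definable point-colourings still admits monochromatic copies even though $M_{\omega,j}$ fails the unrestricted point-Ramsey property, and this hinges precisely on the structural fact that deleting a single $j$-element $\sim$-class from $M_{\omega,j}$ leaves a copy of $M_{\omega,j}$. The remaining verifications --- that the listed atomic formulas are exhaustive, the strong JEP claims, and the age computations for $M_{i,\omega}$ and for $M\setminus C_0$ --- are routine.
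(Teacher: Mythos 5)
Your proposal is correct and follows essentially the same route as the paper: reduce via \Cref{EDRPsimplecheck} and \Cref{equalityflastrivial} to point-colourings externally defined by $x\sim y$, use the same counterexample ($i$ pairwise inequivalent points versus the class of a parameter $e\in M$) for (2), and indivisibility plus \Cref{pigeonindivlem} and \Cref{ageindivpointRamsey} for (3). The only (cosmetic) divergence is in the positive half of (1), where the paper notes that ``every class has size $\leq j$'' is first order, so $|\chi^{-1}(0)|\leq j$, and then applies EP, whereas you classify the colouring as the indicator of a single $\sim$-class and use $M_{\omega,j}\setminus C_0\cong M_{\omega,j}$ --- both arguments are fine.
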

        \begin{proof}
            (1): By \Cref{EDRPsimplecheck} and \Cref{equalityflastrivial} (note that $\Age(M_{i, j})$ has the strong JEP), we need only check the externally definable RP for $|\bar{a}| = 1$ and $\phi(x, y) \equiv x \sim y$. Let $\chi : \binom{M_{i, j}}{\bar{a}} \to 2$ be externally defined by some $e \in N \succeq M_{i, j}$ and $\phi(x, y)$. As all equivalence classes in $N$ are of size $\leq j$, we have $|\chi^{-1}(0)| \leq j$, and so using the EP of $M_{i, j}$, for $B \in \Age(M_{i, j})$ we find a copy $B' \sub M_{i, j}$ of $B$ of colour $1$ in $\chi$, and hence $M_{i, j}$ has the externally definable RP.
			
            To see that $M_{i, j}$ does not have the point-Ramsey property: take a $j$-colouring of the points of $M_{i, j}$ where for each equivalence class $E$ no two of the $j$ points of $E$ have the same colour.
                
            (2): Let $e \in M_{i, j}$. Consider the colouring $\chi : \binom{M_{i, j}}{a} \to 2$ externally defined by $e \in M_{i, j}$ and $\phi(x, y) \equiv x \sim y$. We have that $\chi^{-1}(0)$ is the equivalence class of $e$ in $M_{i, j}$ and $\chi^{-1}(1)$ is the union of the remaining $i - 1$ equivalence classes. So taking $B$ to be $i$ inequivalent points, there is no $\chi$-monochromatic copy of $B$ in $M_{i, j}$.
            
            (3): It is straightforward to see that $M_{i, j}$ is indivisible, and hence by \Cref{pigeonindivlem} and \Cref{ageindivpointRamsey} we see that $M_{i, j}$ has the point-Ramsey property.
        \end{proof}
        
    \subsection{Free amalgamation classes via \NR} \label{freeamalgexs}
        \begin{defn}
            Let $\mc{K}$ be a \Fr class of $\mc{L}$-structures with strong amalgamation. Let $\mc{L}_\leq = \mc{L} \cup \{\leq\}$, where $\leq$ is binary, and let $\mc{K}_\leq$ be the class of $\mc{L}_\leq$-structures $(A, \leq_A)$, where $A \in \mc{K}$ and $\leq_A$ is any linear order on $A$. It is easy to see that $\mc{K}_\leq$ is a \Fr class with strong amalgamation. We call $\mc{K}_\leq$ the \emph{order expansion} of $\mc{K}$.
        \end{defn}
        \begin{thm}[\NRn, \cite{NR77a}, \cite{NR77b}] \label{NRfreeamalg}
            Let $\mc{K}$ be a free amalgamation class in a countable relational language. Then the order expansion $\mc{K}_\leq$ has the Ramsey property.
        \end{thm}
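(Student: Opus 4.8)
\emph{Proof strategy.} The plan is to run the \emph{partite construction} of \Jarik and R\"odl, the standard engine for Ramsey statements of this kind; since a complete proof is long, I will only outline it. Fix a free amalgamation class $\mc{K}$ in a countable relational language $\mc{L}$, let $A \sub B$ in $\mc{K}_\leq$ with $\bar a$ an ordering of $A$, and let $k \in \N_+$; the goal is to produce $C \in \mc{K}_\leq$ with $C \to (B)^{\bar a}_k$. As in the classical two-colour reduction it is harmless to assume $k = 2$, though the construction below is in fact uniform in $k$. Two structural features of $\mc{K}_\leq$ are what make it amenable: the presence of a linear order makes every copy of $B$ in any member of $\mc{K}_\leq$ the image of a \emph{unique} embedding, which is what permits the counting arguments; and free amalgamation of the $\mc{L}$-reducts --- with the linear orders on the factors extended to some (arbitrary, compatible) linear order on the amalgam --- lets one repeatedly glue many copies of a given structure along a common substructure and remain inside $\mc{K}_\leq$.

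The first ingredient is the \emph{Partite Lemma}. Here one works with $A$-partite structures: members of (an auxiliary variant of) $\mc{K}_\leq$ whose vertex set is partitioned into parts indexed by the vertices of $A$, in such a way that every relation is supported on order-compatible transversals and every such transversal is a copy of $A$. The Partite Lemma asserts that for every finite $A$-partite structure $\mathbf{B}$ --- arranged so that a prescribed family of transversal copies of $B$ sits across its parts --- and every $k$, there is a finite $A$-partite structure $\mathbf{C}$ in which any $k$-colouring of the transversal copies of $A$ admits a transversal copy of $\mathbf{B}$ on which it is constant. One proves this by appeal to the Hales--Jewett theorem: take the part of $\mathbf{C}$ over a vertex of $A$ to be the $N$-fold power of the corresponding part of $\mathbf{B}$ for $N$ a sufficiently large Hales--Jewett number, define the $\mc{L}$-relations and the order on $\mathbf{C}$ coordinatewise and lexicographically, and verify that combinatorial lines correspond precisely to transversal copies of $\mathbf{B}$ (and single points to transversal copies of $A$) --- the check that no \emph{spurious} relations are created on the product parts is where freeness of amalgamation re-enters.

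The second ingredient is the iteration itself. Enumerate the copies $\bar A_1, \dots, \bar A_m$ of $A$ inside $B$, and build a chain of \emph{pictures} $\mathbf{P}^0, \mathbf{P}^1, \dots, \mathbf{P}^m$, each a $B$-partite structure in $\mc{K}_\leq$ (parts indexed by the vertices of $B$, transversals being copies of $B$). The base picture $\mathbf{P}^0$ comes from the vertex case: first produce $C_0 \in \mc{K}_\leq$ with $C_0 \to (B)^{\bar v}_k$ for $\bar v$ a single vertex --- itself an instance of Hales--Jewett, or of the Partite Lemma with $|A| = 1$ --- and spread it into partite form. Given $\mathbf{P}^{i-1}$, form $\mathbf{P}^i$ by applying the Partite Lemma to the parts occupied by $\bar A_i$, obtaining a partite pattern, and then taking the free (iterated) amalgam of one copy of $\mathbf{P}^{i-1}$ for each transversal $\bar A_i$-copy appearing in that pattern, glued along the relevant parts; freeness keeps $\mathbf{P}^i$ in $\mc{K}_\leq$. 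Let $C$ be $\mathbf{P}^m$ with its partition forgotten. To see $C \to (B)^{\bar a}_k$, take a $k$-colouring $\chi$ of $\binom{C}{\bar a}$ and descend through $\mathbf{P}^m, \mathbf{P}^{m-1}, \dots, \mathbf{P}^1$: at stage $i$ the Partite Lemma yields a sub-copy of $\mathbf{P}^{i-1}$ inside $\mathbf{P}^i$ on which $\chi$ is constant on all copies of $A$ ``of type $\bar A_i$''; at $\mathbf{P}^0$ the vertex-Ramsey property fixes $\chi$ on the remaining copies; the resulting transversal copy of $B$ is $\chi$-monochromatic.

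The main obstacle is the bookkeeping of the partite construction: setting things up so that ``the type of a copy of $A$'' is well defined and is preserved by the amalgamation steps, verifying that the backward pass really extracts a monochromatic copy of $B$ (rather than merely monochromatic copies of each separate $\bar A_i$-pattern), and choosing the linear orders on all the intermediate amalgams coherently so that every object that arises genuinely lies in $\mc{K}_\leq$ and every ``copy'' is order-preserving. Establishing the Partite Lemma from Hales--Jewett --- in particular checking that the coordinatewise relations on product parts turn combinatorial lines into copies of $\mathbf{B}$ and create nothing more --- is the other delicate point. For the full details one should follow the original papers \cite{NR77a, NR77b} or a modern exposition of the partite construction.
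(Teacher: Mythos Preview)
The paper does not supply its own proof of this statement: Theorem~\ref{NRfreeamalg} is quoted from the literature with citations to \cite{NR77a,NR77b} and then used as a black box in the proof of Proposition~\ref{EDRPbyNR}. So there is nothing in the paper to compare your proposal against.

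That said, your outline is the standard and correct route --- the partite construction driven by the Hales--Jewett theorem is exactly how \Jarik and R\"odl established the result, and your description of the Partite Lemma, the iterated amalgamation over the copies $\bar A_1,\dots,\bar A_m$ of $A$ in $B$, and the backward pass is an accurate sketch of the argument. Since the paper only needs the theorem as input, an outline at this level of detail (with the explicit acknowledgement that the full proof is in the cited references) is entirely appropriate here.
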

        \begin{prop} \label{EDRPbyNR}
            Let $n \geq 2$, and let $\mc{L} = \{R_i : i \in I\}$ with each $R_i$ of arity $n$. Let $M$ be a \Fr $\mc{L}$-structure with age $\mc{K}$ such that:
            \begin{itemize}
                \item each $R_i^M$ is symmetric;
                \item $\mc{K}$ has free amalgamation;
                \item $\Th(M)$ has QE.
            \end{itemize}
            Then $M$ has the externally definable Ramsey property.
        \end{prop}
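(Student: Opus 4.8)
The plan is to invoke \Cref{EDRPsimplecheck}: since $\Th(M)$ has QE and $\mc{L}$ has maximum arity $t = n \geq 2$, it suffices to show that for each $\bar{a} \in \Age(M)$ with $v := |\bar{a}| < n$, the structure $M$ has the $\bar{a}$-RP for externally atomic-definable $2$-colourings. So fix such an $\bar{a}$, and let $\chi \colon \binom{M}{\bar{a}} \to 2$ be externally defined by some $\bar{e} \in N \succeq M$ and an atomic formula $\phi(\bar{x}, \bar{y})$ with $|\bar{y}| = v$; I must find, for every $B \in \Age(M)$, a $\chi$-monochromatic copy of $B$ in $M$. Observe that since $v < n$ and all relations are $n$-ary and (as is standard for free amalgamation classes) hold only on tuples of distinct elements, the substructure $\bar{a}$ carries no $\mc{L}$-structure; in particular every bare $v$-element structure lies in $\Age(M)$, and $\binom{B}{\bar{a}}$ is just the set of injective $v$-tuples from $B$.

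First dispose of the easy cases. If $\phi$ is an equality formula, then since $\Age(M)$ has the strong JEP (being a free amalgamation class) we are done by \Cref{equalityflastrivial}. Otherwise $\phi \equiv R_i(\bar{z})$ for some $i \in I$ and a length-$n$ tuple $\bar{z}$ of variables among $\bar{x}, \bar{y}$. As in the proof of \Cref{EDRPt}, pass to the variables of $\phi$ that actually occur: if some variable occurs twice, or no $\bar{x}$-variable occurs, or no $\bar{y}$-variable occurs, then $\chi$ is constant and we are done; otherwise we may assume $\bar{z}$ lists $\bar{x}$ and $\bar{y}$ without repetition, with $1 \leq |\bar{x}| = n - v$. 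Since $R_i$ is symmetric, $\chi(\bar{a}') = \bigl[\,N \models R_i(\bar{e}, \bar{a}')\,\bigr]$ depends only on the underlying $v$-element set of $\bar{a}'$; hence $\chi$ is induced by a symmetric $v$-ary relation $S$ on $M$, and a copy $B' \in \binom{M}{B}$ is $\chi$-monochromatic precisely when $S$ is constant on the $v$-element subsets of $B'$.

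The main step is \NRn. Since $\mc{K} = \Age(M)$ is a free amalgamation class, \Cref{NRfreeamalg} gives that its order expansion $\mc{K}_{\leq}$ has the Ramsey property. Fix $B \in \mc{K}$, choose a linear order $\leq_B$ on $B$, and let $(V, \leq_V) \in \mc{K}_{\leq}$ be a bare $v$-element structure equipped with a linear order (it lies in $\mc{K}_{\leq}$ since $v < n$). By the Ramsey property of $\mc{K}_{\leq}$ there is $(C, \leq_C) \in \mc{K}_{\leq}$ with $(C, \leq_C) \to (B, \leq_B)^{(V, \leq_V)}_2$. The copies of $(V, \leq_V)$ in $(C, \leq_C)$ are exactly the $\leq_C$-increasing $v$-tuples from $C$, i.e.\ the $v$-element subsets of $C$; so this arrow relation says that every $2$-colouring of the $v$-element subsets of $C$ admits an $\mc{L}$-copy of $B$ all of whose $v$-element subsets receive the same colour. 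Now take any copy $C_0 \in \binom{M}{C}$ of $C$ (one exists since $C \in \Age(M)$); applying the above to the $2$-colouring of the $v$-element subsets of $C_0$ induced by $S$ yields a copy $B' \subseteq C_0 \subseteq M$ of $B$ on which $S$, and therefore $\chi$, is constant. This gives the $\bar{a}$-RP for $\chi$, and hence (via \Cref{EDRPsimplecheck}) the proposition.

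The step I expect to demand the most care is passing from the ordered conclusion supplied by the \NR theorem to a statement about \emph{unordered} $v$-element subsets. For an arbitrary $2$-colouring of \emph{ordered} $v$-tuples, the \NR theorem --- even if one allows more colours and encodes the order-type into the colour --- would only produce a copy of $B$ that is monochromatic separately on each order-type of tuple; the present argument succeeds because $\chi$, and with it $S$, is genuinely symmetric, which is precisely where the symmetry of $R_i$ together with the arity $n$ strictly exceeding $|\bar{a}|$ is used. A secondary point is the reduction of a general atomic $\phi$ to its occurring variables together with the treatment of equality and degenerate atomic formulas; there the convention that the relations are irreflexive (which trivialises the degenerate formulas) and the strong JEP coming from free amalgamation do the work.
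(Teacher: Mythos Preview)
Your proof is correct and follows essentially the same route as the paper: reduce via \Cref{EDRPsimplecheck} to externally atomic-definable colourings with $|\bar{a}| < n$, invoke the \NR theorem on the order expansion $\mc{K}_\leq$, and use the symmetry of $R_i$ to pass from the ordered conclusion back to unordered copies of $\bar{a}$. The only cosmetic differences are that the paper works inside the \Fr limit $M_\leq$ of $\mc{K}_\leq$ (using its full Ramsey property) rather than extracting a single finite witness $C \in \mc{K}_\leq$, and that the paper is terser about the equality and degenerate atomic cases that you dispatch explicitly.
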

        \begin{proof}
            (Similar to {\cite[Example 5.6]{KLM}}.) Let $\mc{K}_\leq$ be the order expansion of $\mc{K}$ with \Fr limit $M_\leq$ (with $M$ the $\mc{L}$-reduct of $M_\leq$). By \Cref{NRfreeamalg}, we see that $M_\leq$ has the RP. To show that $M$ has the externally definable RP, by \Cref{EDRPsimplecheck} it suffices to check the $\bar{a}$-RP for externally atomic-definable $2$-colourings, where $|\bar{a}| < n$. Let $i \in I$ and $\bar{a} \in \mc{K}$, $|\bar{a}| < n$. Let $\chi : \binom{M}{\bar{a}} \to 2$ be externally defined by $\bar{e} \in N \succeq M$ (where $|\bar{e}| + |\bar{a}| = n$) and $\phi(\bar{x}, \bar{y}) \equiv R_i(\bar{x}, \bar{y})$. Let $B \in \mc{K}$.

            Let $(\bar{a}, \leq_A), (B, \leq_B) \in \mc{K}_\leq$. We induce a colouring $\chi_{\text{exp}} : \binom{M_\leq}{(\bar{a}, \leq_A)} \to 2$ by $\chi_{\text{exp}} (\bar{a}', \leq_{A'}) = \chi(\bar{a}')$. As $M_\leq$ has the Ramsey property, there is a $\chi_{\text{exp}}$-monochromatic copy $(B', \leq_{B'}) \sub M_\leq$ of $(B, \leq_B)$. Take $\bar{a}' \in \binom{B'}{\bar{a}}$. There is a permutation $\sigma(\bar{a}')$ of $\bar{a}'$ with $(\sigma(\bar{a}'), \leq_{B'}) \cong (\bar{a}, \leq_A)$, and as $R_i^M$ is symmetric, we have that $\chi(\bar{a}') = \chi(\sigma(\bar{a}')) = \chi_{\text{exp}} ((\sigma(\bar{a}'), \leq_{B'}))$. Thus $B'$ is $\chi$-monochromatic.
        \end{proof}
        
        If $n = 2$, then we only need to check externally definable Ramsey for $|\bar{a}| = 1$, and so we do not require that each $R_i^M$ is symmetric. So we have the following special case of the previous proposition:
        \begin{prop} \label{EDRPbyNRbinary}
            Let $\mc{L}$ be a finite binary relational language. Let $M$ be a \Fr $\mc{L}$-structure whose age has free amalgamation. Then $M$ has the externally definable Ramsey property.
        \end{prop}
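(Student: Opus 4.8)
The plan is to run the argument of \Cref{EDRPbyNR} in the arity-$2$ setting, with two small bonuses: the hypothesis of QE is now automatic, and the case analysis over atomic formulae is short because only one-element tuples $\bar a$ are relevant. Since $\mc{L}$ is a finite relational language and $M$ is a \Fr structure, \Cref{finLQE} gives that $\Th(M)$ has QE, so \Cref{EDRPsimplecheck} applies with maximum arity $t = 2$. It therefore suffices to show, for each one-element $\bar a = a \in \Age(M)$, that $M$ has the $a$-RP for every externally atomic-definable $2$-colouring $\chi : \binom{M}{a} \to 2$.

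So I would fix such a $\chi$, externally defined by $\bar e \in N \succeq M$ and an atomic formula $\phi(\bar x, \bar y)$ with $|\bar y| = 1$, and split into cases according to the shape of $\phi$. If no coordinate of $\bar y$ occurs in $\phi$ (so $\phi$ involves only $\bar x$, or is of the form $x_i = x_j$), then $\chi$ is constant on $\binom{M}{a}$, because all copies of $a$ in $M$ share a quantifier-free type; hence every $B \in \Age(M)$ trivially has a $\chi$-monochromatic copy. If $\phi$ is an equality $x_i = y$, then $\chi$ is handled by \Cref{equalityflastrivial}, using that $\Age(M)$ has the strong JEP — which follows from free amalgamation, since free amalgamation implies strong amalgamation. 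The remaining, main case is $\phi(\bar x, \bar y) \equiv R(x_i, y)$ or $R(y, x_i)$ for some binary $R \in \mc{L}$.

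For that case I would argue exactly as in \Cref{EDRPbyNR}: let $\mc{K}_\leq$ be the order expansion of $\mc{K} := \Age(M)$ and $M_\leq$ its \Fr limit, so that by \Cref{NRfreeamalg} (together with \Cref{classicalRamseyiffage}) $M_\leq$ has the Ramsey property. Given $B \in \mc{K}$ and an arbitrary linear order $\leq_B$ on $B$, induce $\chi_{\mathrm{exp}} : \binom{M_\leq}{(a, \leq)} \to 2$ by $\chi_{\mathrm{exp}}(a', \leq) = \chi(a')$ — this is well defined since a one-element structure carries a unique linear order — obtain from the Ramsey property of $M_\leq$ a $\chi_{\mathrm{exp}}$-monochromatic copy $(B', \leq_{B'}) \sub M_\leq$ of $(B, \leq_B)$, and pass to its $\mc{L}$-reduct $B' \sub M$, which is $\chi$-monochromatic. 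Because $|\bar a| = 1$ there is no ordering ambiguity on copies of $a$, so, in contrast to the general situation of \Cref{EDRPbyNR}, symmetry of $R^M$ is not needed.

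I do not expect a substantial obstacle here: the argument is essentially the $n = 2$ specialisation of \Cref{EDRPbyNR}, and the only genuinely new points are the observation that QE comes for free from \Cref{finLQE} and the routine bookkeeping of the atomic-formula cases, where the equality subcases must be routed through the strong JEP via \Cref{equalityflastrivial}. If one reads ``binary relational language'' as allowing lower-arity symbols as well, those contribute only constant (or equality-type) colourings and are absorbed into the same analysis.
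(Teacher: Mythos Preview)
Your proposal is correct and follows essentially the same approach as the paper: the paper simply remarks that when $n = 2$ one only needs $|\bar a| = 1$, so the symmetry hypothesis in \Cref{EDRPbyNR} is unnecessary, and treats the result as a special case of that proposition. You are just being more explicit than the paper about two points it leaves implicit---that QE is automatic from \Cref{finLQE}, and that the equality atomic formulae are handled via strong JEP (which free amalgamation provides) through \Cref{equalityflastrivial}.
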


        We apply \Cref{EDRPbyNR,EDRPbyNRbinary} to see that the random graph, random $K_n$-free graph and random $n$-hypergraph have the externally definable Ramsey property.
    
    \subsection{The directed graphs \texorpdfstring{$S(2)$}{S(2)} and \texorpdfstring{$S(3)$}{S(3)}}
    
        The \emph{dense local order} $S(2)$ is defined as follows. Let $\mc{L} = \{R\}$ with $R$ binary. Let $\mathbb{T}$ denote the unit circle in the complex plane, and let $S(2)$ be the $\mc{L}$-structure whose domain consists of the points of $\mathbb{T}$ with rational argument, and where $aRb$ iff the angle subtended at the origin by the directed arc from $a$ to $b$ is less than $\pi$. As the domain of $S(2)$ excludes antipodal points, $S(2)$ is a tournament.


        We have that $S(2)$ is ultrahomogeneous - see \cite{Lac84} for further details.

        \begin{prop}
            The dense local order $S(2)$ does not have the externally definable Ramsey property.
        \end{prop}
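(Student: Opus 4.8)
The plan is to defeat the externally definable Ramsey property of $S(2)$ with a single, very explicit colouring: an externally definable $2$-colouring of $\binom{S(2)}{a}$, for a one-element structure $a$, admitting no monochromatic copy of the directed $3$-cycle. Recall that $B$, the tournament on $\{b_0,b_1,b_2\}$ with $b_0 R b_1$, $b_1 R b_2$, $b_2 R b_0$, is a finite local order and hence lies in $\Age(S(2))$ -- concretely it is realised by any three vertices cyclically spaced around $\mathbb{T}$ so that each successive directed arc subtends an angle less than $\pi$, and such vertices exist by density of the points of rational argument. Producing a colouring with the stated property is, by definition, already enough to conclude; it is moreover exactly the shape of obstruction to EDRP permitted by \Cref{EDRPsimplecheck}, since $\mathcal{L}=\{R\}$ has arity $2$.

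Concretely, fix any $e\in S(2)$, say at argument $\theta$, and let $\chi:\binom{S(2)}{a}\to 2$ be externally defined (one may take $N=S(2)$) by the parameter $e$ and the atomic formula $\phi(x,y)\equiv R(x,y)$, so that colour $0$ is $\{v\in S(2): e R v\}$ and colour $1$ is its complement among the vertices. Geometrically, colour $0$ is exactly the set of vertices lying strictly inside the open half-arc $(\theta,\theta+\pi)$, and colour $1$ the vertices strictly inside $(\theta-\pi,\theta)$ together with $e$ itself; since the antipode of $e$ is absent from the domain, these two sets partition $S(2)$. The one point that requires an argument is that $R$ restricted to the vertices of a single open arc of $\mathbb{T}$ of angular length at most $\pi$ is a linear order: for two such vertices, the directed arc between them runs with length less than $\pi$ one way and more than $\pi$ the other, so $R$ agrees with the order by position along the arc, which is visibly transitive and antisymmetric. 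Hence each colour class of $\chi$ carries a linear order under $R$ (colour $1$ being the order on the back half-arc extended by $e$ as a top element), so is $R$-acyclic and in particular contains no copy of the directed $3$-cycle $B$.

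It follows that $S(2)$ has no $\chi$-monochromatic copy of $B$, so $S(2)\not\to_{\ext}(B)^{a}_2$, and therefore $S(2)$ lacks the externally definable $a$-Ramsey property, hence the externally definable Ramsey property. The only genuine obstacle is the half-arc linearity observation, which is elementary once the circular order on $\mathbb{T}$ is pinned down precisely; the remainder is unwinding the definitions. (Note that $\chi$ is in fact definable over $M=S(2)$ itself, the parameter $e$ lying in $M$, so $S(2)$ fails the definable Ramsey property even for colourings that use no parameters from a proper elementary extension.)
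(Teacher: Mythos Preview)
Your proof is correct and follows essentially the same approach as the paper: both take $e\in S(2)$, the atomic formula $\phi(x,y)\equiv xRy$, and observe that the resulting vertex $2$-colouring has no monochromatic copy of the directed $3$-cycle $\overrightarrow{C}_3$. The paper simply asserts this last point as ``immediate'', whereas you spell out the reason (each colour class, being contained in a half-arc, is linearly ordered by $R$ and hence acyclic); your extra care in handling $e$ itself as a top element of colour~$1$ is also correct.
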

        \begin{proof}
            Take $e \in S(2)$, and let $\phi(x, y) \equiv x R y$. Let $\chi$ be the vertex-colouring of $S(2)$ externally defined by $e \in S(2)$ and $\phi(x, y)$. It is then immediate that $\chi$ does not have a monochromatic copy of $\overrightarrow{C}_3 \in \Age(S(2))$.
        \end{proof}

        The digraph $S(3)$ is defined in a similar manner to $S(2)$. It has the same domain, but for $S(3)$ we define the digraph structure by: $aRb$ iff the angle subtended at the origin by the directed arc from $a$ to $b$ is less than $\frac{2\pi}{3}$. Note that $S(3)$ is not a tournament. We have that $S(3)$ is ultrahomogeneous -- see \cite{Che98}.

        \begin{prop}
            The digraph $S(3)$ does not have the externally definable Ramsey property.
        \end{prop}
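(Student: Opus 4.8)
The plan is to adapt the argument used for $S(2)$: fix a parameter $e\in S(3)$ and colour the vertices of $S(3)$ by their quantifier-free type over $e$. Concretely, I would take the externally definable (indeed $e$-definable) $2$-colouring $\chi$ of $\binom{S(3)}{a}$, where $a$ is a single vertex and the colour-$1$ class is defined by $\phi(x,y)\equiv \neg R(x,y)\wedge\neg R(y,x)\wedge x\ne y$, so colour $1$ is the set of non-neighbours of $e$ (other than $e$) and colour $0$ is $\{e\}$ together with the neighbours of $e$. Unlike the $S(2)$ case, I cannot use a directed $3$-cycle as the witnessing structure: three consecutive arcs of total angle $2\pi$ cannot each be strictly less than $2\pi/3$, so $\overrightarrow{C}_3$ does not embed in $S(3)$. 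Instead I would take $B$ to be the $3$-element digraph with no arcs at all (an independent triple), which does embed in $S(3)$ — three points of the circle pairwise at arc-distance $2\pi/3$ are pairwise non-adjacent — so $B\in\Age(S(3))$.

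I would then check that $\chi$ has no monochromatic copy of $B$. Placing $e$ at angle $0$, colour class $1$ is exactly the set of points with angle in the closed arc $[2\pi/3,4\pi/3]$ (which automatically excludes $e$ and its antipode); for two such points $v,w$ with $\theta_v<\theta_w$ one has $0<\theta_w-\theta_v\le 2\pi/3$, hence $vRw$ unless $\theta_w-\theta_v=2\pi/3$, which forces $\{\theta_v,\theta_w\}=\{2\pi/3,4\pi/3\}$; so colour class $1$ has at most one non-adjacent pair, and in particular no three pairwise non-adjacent vertices, hence no copy of $B$. For colour class $0$, the key point is that any independent triple in $S(3)$ must be ``equilateral'': three pairwise non-adjacent points have all three consecutive arcs lying in $[2\pi/3,4\pi/3]$, and since these sum to $2\pi$ each equals $2\pi/3$. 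But three points equally spaced by $2\pi/3$ always include one whose angle lies in the closed arc $[2\pi/3,4\pi/3]$ (an arc of length $2\pi/3$ meets every such triple), i.e.\ one lying in colour class $1$. Hence no copy of $B$ lies entirely in colour class $0$. Therefore $S(3)\not\to_{\ext}(B)^{a}_2$, so $S(3)$ fails the externally definable $a$-Ramsey property, and in particular does not have the externally definable Ramsey property.

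The main obstacle is pinning down the right witness $B$ and colouring: the $S(2)$ argument fails twice over. First, the directed triangle is simply not in $\Age(S(3))$. Second, if one replaces it by the smaller obstruction ``two non-adjacent vertices'' and uses the $S(2)$-style colouring by $R(x,y)$, the non-neighbour class of $e$ — which, unlike the neighbourhood classes in $S(2)$, is genuinely not a tournament — does contain a non-adjacent pair, so that colouring is useless. The two facts that need care are therefore: (i) that the non-neighbour class of $e$, although not a linear order, is a linear order with at most one edge removed, hence contains no independent triple; and (ii) the elementary geometric observation that every equilateral triple on the circle meets the closed arc $[2\pi/3,4\pi/3]$, which is exactly what forbids an independent triple from sitting entirely among the neighbours of $e$.
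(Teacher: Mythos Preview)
Your argument has a fatal gap: the witness $B$ you choose --- the three-element digraph with no arcs --- is \emph{not} in $\Age(S(3))$. You assert that ``three points of the circle pairwise at arc-distance $2\pi/3$ are pairwise non-adjacent'', but $2\pi/3$ is irrational, so no two points of $S(3)$ (whose arguments are rational) are at arc-distance exactly $2\pi/3$. In fact your own analysis of colour class~$0$ proves the non-existence of an independent triple: if $p_1,p_2,p_3$ are in cyclic order with consecutive arcs $\alpha_1,\alpha_2,\alpha_3$ summing to $2\pi$, then pairwise non-adjacency forces each $\alpha_i\in(2\pi/3,4\pi/3)$ (the interval is \emph{open} because the endpoints are irrational), giving $\alpha_1+\alpha_2+\alpha_3>2\pi$, a contradiction. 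So the ``equilateral'' triple you invoke does not live in $S(3)$, and without $B\in\Age(S(3))$ there is nothing to colour.

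The paper's proof avoids this by taking $B=\overrightarrow{C}_4$, the directed $4$-cycle, and the simpler colouring $\phi(x,y)\equiv xRy$. One checks $\overrightarrow{C}_4\in\Age(S(3))$ (four points roughly a quarter-turn apart), and then observes that for any copy of $\overrightarrow{C}_4$ in $S(3)$ and any vertex $e$, the point $e$ has an out-edge to some vertex of the copy and an in-edge from another, so the copy cannot be $\chi$-monochromatic. Your instinct to look for a small obstruction was right, but the correct small obstruction in $S(3)$ is $\overrightarrow{C}_4$, not an independent set.
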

        \begin{proof}
            Let $\overrightarrow{C}_4$ denote the directed $4$-cycle. Then it is straightforward to check that $\overrightarrow{C}_4 \in \Age(S(3))$. Let $e \in S(3)$, let $\phi(x, y) \equiv x R y$, and let $\chi$ be the vertex-colouring externally defined by $e \in S(3)$ and $\phi(x, y)$. Then for any copy of $\overrightarrow{C}_4$ in $S(3)$, $e$ has an out-edge to one vertex of the copy and an in-edge from another vertex of the copy. Therefore $\chi$ has no monochromatic copy of $\overrightarrow{C}_4$.
        \end{proof}
        
    \subsection{The rainbow graph, saturated version}
        \begin{defn} \label{omegacolouredgraph}
            Let $\mc{L} = \{R_i : i < \omega\}$, where each $R_i$ is a binary relation symbol. Let $A$ be an $\mc{L}$-structure where each $R_i^A$ is irreflexive and symmetric, and where for $i \neq j$ we have $R_i^A \cap R_j^A = \varnothing$. Then we call $A$ an \emph{$\omega$-coloured graph}. Note that it is permitted for two vertices to not have any coloured edge joining them.
        \end{defn}
			
        Let $\mc{K}$ be the class of finite $\omega$-coloured graphs. It is straightforward that $\mc{K}$ has free amalgamation, and we denote its \Fr limit by $M$, which we call the \emph{rainbow graph}. As for the random graph, the EP for $M$ is equivalent to the following \emph{witness property}: for each finite set of colours $I \fin \omega$, we have that
        \begin{enumerate}
            \item[($\ast_I$)] for all $U_i \fin M$, $i \in I$, and $V \fin M$, where these sets are pairwise disjoint, there exists $m \in M$ disjoint from $\bigcup_{i \in I} U_i \cup V$ such that:
				\begin{itemize}
				    \item for $i \in I$, $m$ has an edge of colour $i$ to each vertex of $U_i$;
					\item $m$ has no edges to any vertex of $V$.
				\end{itemize}
		\end{enumerate}
        Any two countable $\omega$-coloured graphs satisfying the witness property are isomorphic by a back-and-forth argument. But as $\Aut(M)$ has infinitely many orbits on $M^2$, by the Ryll-Nardzewski theorem $M$ is not $\aleph_0$-categorical.
    	
        \begin{lem}
		$\Th(M)$ has QE.
	\end{lem}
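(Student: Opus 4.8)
The plan is to verify the standard back-and-forth criterion for quantifier elimination (see, e.g., \cite{Mar02}): it suffices to show that whenever $N_1, N_2$ are $\omega$-saturated models of $\Th(M)$ and $\bar{a} \in N_1$, $\bar{b} \in N_2$ are finite tuples with $\qftp(\bar{a}) = \qftp(\bar{b})$, then for every $c \in N_1$ there exists $d \in N_2$ with $\qftp(\bar{a}c) = \qftp(\bar{b}d)$; the reverse direction then follows by symmetry. This yields $(N_1, \bar{a}) \equiv (N_2, \bar{b})$ for all such tuples, which is equivalent to QE. Note that we cannot invoke \Cref{omegahomogiffQE} directly here, since $M$ is not $\aleph_0$-categorical.

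I would begin with two bookkeeping observations. First, in any $\omega$-coloured graph the quantifier-free type of a tuple is determined by the pattern of equalities among its coordinates together with, for each pair of coordinates, the unique colour $j < \omega$ of the connecting edge (if any), or the information that no coloured edge connects them. Second, for each finite set of colours $I \fin \omega$ and each choice of sizes for the sets $U_i$ ($i \in I$) and $V$, the witness property $(\ast_I)$ is expressed by a single first-order $\mc{L}$-sentence; since $M$ satisfies all of these, along with the first-order axioms asserting that each $R_i^M$ is irreflexive and symmetric and that $R_i^M \cap R_j^M = \varnothing$ for $i \neq j$, all of these sentences lie in $\Th(M)$ and hence hold in $N_1$ and $N_2$.

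For the main step, I would fix $N_1, N_2, \bar{a}, \bar{b}$ and $c$ as above. If $c$ is a coordinate $a_i$ of $\bar{a}$, then $d := b_i$ works, so one may assume otherwise. For each $i$ let $\lambda_i \in \omega \cup \{\ast\}$ be the colour of the edge from $c$ to $a_i$, with $\lambda_i = \ast$ meaning ``no coloured edge'', and put $I = \{\lambda_i : \lambda_i \neq \ast\} \fin \omega$. I would then consider the partial type $\pi(x)$ over $\bar{b}$ in $N_2$ consisting of $x \neq b_i$ for all $i$, of $R_{\lambda_i}(x, b_i)$ for each $i$ with $\lambda_i \neq \ast$, and of $\neg R_j(x, b_i)$ for each $i$ with $\lambda_i = \ast$ and each $j < \omega$. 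The key claim is that $\pi(x)$ is finitely satisfiable in $N_2$: a finite subset mentions only finitely many of the $b_i$ and only finitely many constraints of the form $\neg R_j$, and it is realised by an element obtained from $(\ast_I)$ applied in $N_2$ with $U_j := \{b_i : \lambda_i = j\}$ for $j \in I$ and $V$ equal to the finite set of relevant coordinates $b_i$ with $\lambda_i = \ast$ (these sets being pairwise disjoint once repeated coordinates are identified). By $\omega$-saturation of $N_2$, all of $\pi(x)$ is realised by some $d$, and using the symmetry and disjointness axioms for the $R_j$ one checks directly that $\qftp(\bar{a}c) = \qftp(\bar{b}d)$, as required.

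The one genuine difficulty, and the reason this does not reduce to the easy $\aleph_0$-categorical case covered by \Cref{omegahomogiffQE}, is that the quantifier-free type of $c$ over $\bar{a}$ can impose infinitely many conditions at once --- namely $\neg R_j(c, a_i)$ for every $j < \omega$ whenever $c$ is joined to $a_i$ by no coloured edge --- whereas each application of the witness property supplies only finitely many ``non-edge'' constraints. This forces the back-and-forth to be carried out inside $\omega$-saturated models via a finite-satisfiability argument rather than naively and finitarily. Everything else (the description of quantifier-free types, the first-order expressibility of $(\ast_I)$, and the verification that $d$ realises the desired quantifier-free type) should be routine.
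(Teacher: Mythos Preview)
Your back-and-forth approach on $\omega$-saturated models is sound in outline, but there is a genuine slip: the witness property $(\ast_I)$ is \emph{not} expressible by a single first-order $\mc{L}$-sentence. The clause ``$m$ has no edges to any vertex of $V$'' means $\neg R_j(m,v)$ for \emph{every} $j < \omega$, an infinite conjunction. Your last paragraph even asserts that each application of the witness property supplies only finitely many non-edge constraints, which contradicts both the actual content of $(\ast_I)$ and your earlier claim that it is a single first-order sentence; indeed, if $(\ast_I)$ really did transfer to $N_2$, you could realise all of $\pi(x)$ in one step and would not need saturation at all. What \emph{is} first-order, and what you actually need for finite satisfiability, is the weaker scheme: for each pair of finite sets $I, J \fin \omega$ and each choice of sizes, the sentence asserting that for pairwise disjoint $U_i$ ($i \in I$) and $V$ there exists $m$ with an $R_i$-edge to each element of $U_i$ and with $\neg R_j(m,v)$ for all $v \in V$ and $j \in J$. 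These sentences hold in $M$ (as consequences of the full $(\ast_I)$) and hence in $N_2$, and they realise each finite fragment of $\pi(x)$. With this correction your argument goes through.

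The paper takes a shorter and rather different route: for each finite $I \fin \omega$, the $\mc{L}_I$-reduct $M_I$ of $M$ (where $\mc{L}_I = \{R_i : i \in I\}$) satisfies $(\ast_I)$ --- now genuinely first-order, since in the reduct ``no edges'' refers only to colours in $I$ --- hence has the extension property, is ultrahomogeneous in a finite relational language, and so $\Th(M_I)$ has QE by \Cref{finLQE}. Since every $\mc{L}$-formula lies in some $\mc{L}_I$, QE for $\Th(M)$ follows immediately. This sidesteps saturated models and the back-and-forth entirely, and the same reduction to finite sublanguages is reused for the other infinite-language examples later in the paper.
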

        \begin{proof}
            For $I \fin \omega$, let $\mc{L}_I = \{R_i : i \in I\}$, and let $M_I$ be the $\mc{L}_I$-reduct of $M$. Each $M_I$ satisfies $(\ast_I)$, so has the EP and is thus ultrahomogeneous in a finite relational language, and so $\Th(M_I)$ has QE. Thus $\Th(M)$ has QE.
        \end{proof}

        It is then quick to see that $M$ is $\aleph_0$-saturated: using QE, this follows from the definition of $\mc{K}$ and the EP of $M$.
        		
        $M$ has the pigeonhole property (see \Cref{pigeonindivdef}): the proof is similar to that for the random graph. Thus we see that $M$ has the vertex-Ramsey property by \Cref{pigeonindivlem,ageindivpointRamsey}, and as $\Th(M)$ has QE, by \Cref{EDRPsimplecheck} we have:
			
        \begin{prop}
            The $\aleph_0$-saturated rainbow graph $M$ has the externally definable Ramsey property.
        \end{prop}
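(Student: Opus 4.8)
The plan is to invoke \Cref{EDRPsimplecheck}. The language $\mc{L} = \{R_i : i < \omega\}$ consists of binary relation symbols, so its maximum arity is $t = 2$, and we have just shown that $\Th(M)$ has QE; hence it suffices to check, for every $\bar{a} \in \Age(M)$ with $|\bar{a}| < t$ — that is, $|\bar{a}| = 1$ — that $M$ has the $\bar{a}$-RP for every externally atomic-definable $2$-colouring of $\binom{M}{\bar{a}}$.

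So fix a single point $a \in M$ and an atomic formula $\phi(\bar{x}, y)$ externally defining such a colouring $\chi : \binom{M}{a} \to 2$ via a tuple $\bar{e} \in N \succeq M$. As $\mc{L}$ has no function symbols and only binary relation symbols, up to the order of arguments $\phi$ is one of $x_j = x_k$, $y = y$, $x_j = y$, $R_i(x_j, x_k)$, $R_i(y, y)$ or $R_i(x_j, y)$. In every case other than $x_j = y$ and $R_i(x_j, y)$ the colouring $\chi$ does not depend on its argument (using irreflexivity of $R_i^M$ for $R_i(y, y)$), so $\chi$ is constant and any $B \in \Age(M)$ has a monochromatic copy trivially. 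If $\phi$ is $x_j = y$, we apply \Cref{equalityflastrivial}: the class $\Age(M)$ of finite $\omega$-coloured graphs has the strong JEP, since the disjoint union of two finite $\omega$-coloured graphs with no edges between the pieces is again a finite $\omega$-coloured graph, so $M$ has the $a$-RP for all $2$-colourings externally defined by an atomic formula involving equality. Finally, if $\phi$ is $R_i(x_j, y)$, then $\chi$ is simply a $2$-colouring of the vertices of $M$, namely $\chi(a') = 0$ iff $N \models R_i(e_j, a')$; and $M$ has the vertex-Ramsey property (established above from the pigeonhole property via \Cref{pigeonindivlem,ageindivpointRamsey}), so $\chi$ has a monochromatic copy of every $B \in \Age(M)$. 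This verifies the hypothesis of \Cref{EDRPsimplecheck}, and we are done.

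The substance here lies entirely in the ingredients already assembled — QE, the pigeonhole property and \Cref{EDRPsimplecheck} — so there is no real obstacle; the only step that needs a moment's care is the equality case, where one checks the strong JEP of $\Age(M)$ so that \Cref{equalityflastrivial} applies (rather than the counterexample it contains). This check is immediate for $\omega$-coloured graphs.
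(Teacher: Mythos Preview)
Your proposal is correct and follows essentially the same route as the paper: both arguments combine QE, the pigeonhole property (yielding the vertex-Ramsey property via \Cref{pigeonindivlem,ageindivpointRamsey}), and \Cref{EDRPsimplecheck}. The only difference is that you perform an explicit case analysis over atomic formulae, whereas the paper simply observes that the full vertex-Ramsey property (for \emph{all} $2$-colourings of points) already subsumes every externally atomic-definable $2$-colouring, so no case split is needed.
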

		
    \subsection{The rainbow graph, unsaturated version}
		
        \begin{defn}
            Let $\mc{L} = \{R_i : i < \omega\}$ be as in the previous example. We say that an $\mc{L}$-structure $A$ is a \emph{complete $\omega$-coloured graph} if $A$ is an $\omega$-coloured graph (see \Cref{omegacolouredgraph}) and for all $a, b \in A$ with $a \neq b$, there exists $i < \omega$ with $a R_i b$.
        \end{defn}
        Let $\mc{K}$ be the class of all finite complete $\omega$-coloured graphs. It is easy to see that $\mc{K}$ is an amalgamation class: we denote its \Fr limit by $M$. The EP for $M$ is equivalent to the following witness property: for each finite set of colours $I \fin \omega$, we have that
        \begin{enumerate}
            \item[($\ast\ast_I$)] for all $U_i \fin M$ ($i \in I$), where these finite sets are pairwise disjoint, there exists $m \in M$ disjoint from $\bigcup_{i \in I} U_i$ such that for $i \in I$, $m$ has an edge of colour $i$ to each vertex of $U_i$.
        \end{enumerate}
		
        Any two countable complete $\omega$-coloured graphs satisfying this witness property are isomorphic via a back-and-forth argument. As in the previous example, $\Th(M)$ is not $\aleph_0$-categorical. To see that $M$ is not $\aleph_0$-saturated, observe that the set $\{\neg x R_i y : i < \omega\}$ is finitely satisfiable in $M$, but not satisfiable in $M$. We have that $\Th(M)$ has QE, and the proof is as for the saturated rainbow graph (note that here reducts will not be complete $\omega$-coloured graphs). As for the saturated rainbow graph, we see that $M$ has the pigeonhole property, and therefore by \Cref{EDRPsimplecheck} we have that:
			
        \begin{prop}
            The unsaturated rainbow graph $M$ has the externally definable Ramsey property.
        \end{prop}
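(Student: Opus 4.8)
The plan is to reduce, exactly as in the preceding examples, to a point-Ramsey statement, and then to derive that statement from an indivisibility argument. Since $\Th(M)$ has QE (as just established) and $\mc{L}$ has maximum arity $t = 2$, \Cref{EDRPsimplecheck} says it is enough to verify that $M$ has the $\bar{a}$-RP for externally atomic-definable $2$-colourings for each $\bar{a} \in \Age(M)$ with $|\bar{a}| = 1$; and this holds a fortiori once $M$ has the full (unrestricted) point-Ramsey property. So the whole proposition reduces to showing that $M$ has the point-Ramsey property. For that, I would first note that $M$ is transitive: $\mc{L}$ has no unary symbols, so all one-element structures in $\Age(M)$ are isomorphic, and ultrahomogeneity then gives that $\Aut(M)$ acts transitively on $M$. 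By \Cref{ageindivpointRamsey} it therefore suffices to show that $M$ is age-indivisible, and by \Cref{pigeonindivlem} it is enough to show that $M$ has the pigeonhole property.

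The proof of the pigeonhole property mimics the random-graph argument given above, with the witness property $(\ast\ast_I)$ playing the role of $(\ast)$. Concretely: suppose for contradiction that $M = M_1 \cup M_2$ with $M_1, M_2$ disjoint and $M_1, M_2 \not\cong M$. Each $M_\ell$, with the induced $\mc{L}$-structure, is again a countable complete $\omega$-coloured graph; since any such graph satisfying the witness property is isomorphic to $M$, each $M_\ell$ must fail $(\ast\ast_{I_\ell})$ for some finite $I_\ell \fin \omega$, i.e.\ there are pairwise disjoint finite sets $U_i^{(\ell)} \sub M_\ell$ ($i \in I_\ell$) admitting no witness in $M_\ell$. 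Put $I = I_1 \cup I_2$ and, for $i \in I$, set $U_i = U_i^{(1)} \cup U_i^{(2)}$ (reading $U_i^{(\ell)} = \varnothing$ when $i \notin I_\ell$); these sets are finite and pairwise disjoint, since $M_1, M_2$ are disjoint and each family $(U_i^{(\ell)})_{i \in I_\ell}$ is pairwise disjoint. Applying $(\ast\ast_I)$ in $M$ produces an $m \in M$, disjoint from $\bigcup_{i \in I} U_i$, with an edge of colour $i$ to every vertex of $U_i$ for each $i \in I$. This $m$ lies in one of $M_1, M_2$, say $M_\ell$, and it is then a witness in $M_\ell$ for the family $(U_i^{(\ell)})_{i \in I_\ell}$ -- contradiction. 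Hence $M$ has the pigeonhole property, and combining the reductions above with \Cref{EDRPsimplecheck} yields the externally definable Ramsey property.

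As for difficulty: there is essentially no genuine obstacle here, the whole argument being a transcription of the random-graph and saturated-rainbow-graph proofs. The only points that need a little care are (i) that a partition class inherits the ``complete $\omega$-coloured graph'' structure, so that the characterisation of $M$ via its witness property applies to it (and that a finite class trivially fails the witness property, so the case split causes no trouble); and (ii) the book-keeping when merging the colour sets $I_1$ and $I_2$ -- a given colour $i \in I$ may receive a contribution $U_i^{(\ell)}$ from only one of the two classes, and one must keep track of this when reading the witness $m$ back into the appropriate $M_\ell$.
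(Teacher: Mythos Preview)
Your proposal is correct and follows essentially the same route as the paper: the paper simply remarks that, as for the saturated rainbow graph, $M$ has the pigeonhole property (proved as for the random graph), hence the point-Ramsey property via \Cref{pigeonindivlem} and \Cref{ageindivpointRamsey}, and then invokes \Cref{EDRPsimplecheck}. You have merely spelled out in full the pigeonhole argument that the paper leaves implicit.
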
 
			
    \subsection{Infinitely refining equivalence relations, saturated version} \label{satinfrefinERs}
        Let $\mc{L} = \{R_i : i < \omega\}$ with each $R_i$ binary. Let $\mc{K}$ be the class of finite $\mc{L}$-structures $A$ where:
        \begin{enumerate}
            \item for $i < \omega$, $R_i^A$ is an equivalence relation;
            \item for $i < j < \omega$, $R_j^A \sub R_i^A$. 
        \end{enumerate}
			
        It is easy to see that $\mc{K}$ is an amalgamation class: given embeddings $A \to B, A \to C$ of $A, B, C \in \mc{K}$, take the free amalgam of $B, C$ over $A$ and then the transitive closure of each relation therein. Let $M$ denote the \Fr limit of $\mc{K}$. It is easy to see that the action $\Aut(M) \curvearrowright M^2$ has infinitely many orbits, and so by the Ryll-Nardzewski theorem $M$ is not $\aleph_0$-categorical.
	\begin{lem} \label{satinfrefinERsQE}
            $\Th(M)$ has QE.
	\end{lem}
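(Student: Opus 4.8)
The plan is to reduce $\Th(M)$ to its finite reducts, imitating the strategy used above for the rainbow graph, but inserting an extra expansion step to handle the fact that a reduct of $M$ to finitely many of the $R_i$'s need not obviously satisfy a witness-type extension property on the nose. For $n<\omega$ write $\mc{L}_n=\{R_0,\dots,R_{n-1}\}$, a finite relational sublanguage, and let $M_n=M\restriction\mc{L}_n$. Since $\mc{L}=\bigcup_n\mc{L}_n$, every $\mc{L}$-formula is an $\mc{L}_n$-formula for some $n$, so it suffices to show that each $\Th(M_n)$ has QE: then any $\mc{L}$-formula $\phi$, viewed as an $\mc{L}_n$-formula, is $\Th(M_n)$-equivalent to a quantifier-free $\mc{L}_n$-formula $\psi$, and since $\forall\bar x\,(\phi\leftrightarrow\psi)\in\Th(M_n)\subseteq\Th(M)$ with $\psi$ quantifier-free over $\mc{L}$, we are done.

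By \Cref{finLQE} it is enough to prove that each $M_n$ is ultrahomogeneous (it is clearly countably infinite), and I would do this via the extension property, leveraging ultrahomogeneity — hence EP — of $M$ itself. Fix a finite $\mc{L}_n$-substructure $A\subseteq M_n$, a finite $\mc{L}_n$-structure $B\supseteq A$ in which $R_0\supseteq\dots\supseteq R_{n-1}$ are refining equivalence relations, and an $\mc{L}_n$-embedding $e\colon A\to M_n$. Let $A^+$ be the $\mc{L}$-structure obtained by pulling back along $e$ the $\mc{L}$-structure that $M$ induces on $e(A)$; then $A^+\restriction\mc{L}_n=A$, $A^+\in\Age(M)=\mc{K}$, and $e\colon A^+\to M$ is an $\mc{L}$-embedding. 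The key construction is to extend $A^+$ to an $\mc{L}$-structure $B^+\supseteq A^+$ with $B^+\restriction\mc{L}_n=B$ and $B^+\in\mc{K}$: keep $R_j^{B^+}=R_j^B$ for $j<n$, and for $j\ge n$ define $R_j^{B^+}$ on each $R_{n-1}^B$-class $E$ separately, by copying $R_j^{A^+}\restriction(E\cap A)$ on $E\cap A$ and declaring every point of $E\setminus A$ to be alone in its $R_j^{B^+}$-class. A routine check then shows that the resulting relations are equivalence relations, form a refining chain sitting below $R_{n-1}^B$, restrict correctly to $A$, and reduce to $B$ over $\mc{L}_n$, so $B^+\in\mc{K}$. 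Applying the extension property of $M$ to $A^+\subseteq B^+$ and the embedding $e$ yields an $\mc{L}$-embedding $f\colon B^+\to M$ extending $e$, whence $f\restriction\mc{L}_n\colon B\to M_n$ is the required $\mc{L}_n$-extension of $e$. Thus $M_n$ has the extension property and, being countable, is ultrahomogeneous.

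The main point needing care is the construction of $B^+$, i.e.\ that one can always fill in the ``missing'' equivalence relations $R_n,R_{n+1},\dots$ on $B$ so that they refine $R_{n-1}^B$, descend, and agree with the already-chosen $R_j^{A^+}$ on $A$; handling each $R_{n-1}^B$-class independently and turning the genuinely new points into singletons at every level $\ge n$ makes all of these requirements automatic (each $R_j^{A^+}$-class for $j\ge n$ is contained in a single $R_{n-1}^A$-class, hence in a single $R_{n-1}^B$-class, so the copying is well defined). Everything else is bookkeeping: that the pulled-back $A^+$ lies in $\mc{K}$, that every finite substructure of $M_n$ indeed has the form assumed for $A$, and the standard passage from QE of the $\Th(M_n)$ to QE of $\Th(M)$ using $\Th(M_n)\subseteq\Th(M)$.
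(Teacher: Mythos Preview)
Your proof is correct and follows essentially the same approach as the paper: reduce to the finite sublanguages $\mc{L}_n$, show each reduct $M_n$ has the extension property by expanding a given $\mc{L}_n$-extension $B\supseteq A$ to an $\mc{L}$-structure $B^+\in\mc{K}$ in which the new points are put into singleton classes for all $R_j$ with $j\ge n$, and then invoke \Cref{finLQE}. The only cosmetic differences are that the paper does the expansion step for one-point extensions $B=A\cup\{b\}$ (which suffices for EP), whereas you do it for arbitrary finite extensions, and you spell out the pullback of the $\mc{L}$-structure along $e$ and the passage from QE of the $\Th(M_n)$ to QE of $\Th(M)$ in more detail.
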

	\begin{proof}
            The proof is as for the rainbow graph. For $i < \omega$, let $\mc{L}_i = \{R_j : j < i\}$, and let $M_i$ be the $\mc{L}_i$-reduct of $M$. We show that $M_i$ has the EP, is thus ultrahomogeneous in a finite relational language, and so $\Th(M_i)$ has QE. As $M$ has the EP, it suffices to show that for $A_i, B_i \in \Age(M_i)$ with $B_i = A_i \cup \{b\}$, for any $\mc{L}$-expansion $A \in \Age(M)$ of $A_i$, there exists an $\mc{L}$-expansion $B \in \Age(M)$ of $B_i$ with $A \sub B$. For $j < i$, let $R_j^B = R_j^{B_i}$, and for $j \geq i$, let the equivalence classes of $R_j^B$ consist of the equivalence classes of $R_j^A$ together with an additional singleton equivalence class $\{b\}$. Then $B$ is as required.
	\end{proof}

        To see that $M$ is $\aleph_0$-saturated, the proof is as for the rainbow graph.
        
        \begin{prop}
            The structure $M$ has the externally definable Ramsey property.
	\end{prop}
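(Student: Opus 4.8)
The plan is to invoke \Cref{EDRPsimplecheck}: we have already recorded that $M$ is ultrahomogeneous and that $\Th(M)$ has QE (\Cref{satinfrefinERsQE}), and the maximum arity of $\mc{L}$ is $t = 2$, so it suffices to check that for each $\bar{a} \in \Age(M)$ with $|\bar{a}| = 1$, the structure $M$ has the $\bar{a}$-RP for externally atomic-definable $2$-colourings. Writing $a$ for the single element of $\bar a$, I would first sort the possible atomic formulae $\phi(\bar{x}, y)$ externally defining a colouring $\chi : \binom{M}{a} \to 2$ via $\bar{e} \in N \succeq M$. If $\phi$ does not involve $y$, or involves only $y$, or is $R_i(y,y)$, then $\chi$ is constant and we are trivially done. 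If $\phi$ is an equality formula involving $y$, then $\chi^{-1}(0)$ has at most one point, and we are done by \Cref{equalityflastrivial} since $\Age(M)$ has the strong JEP (amalgamate disjointly by placing the two structures in separate $R_0$-classes). The only remaining case is $\phi \equiv R_i(x_j, y)$ (equivalently, by symmetry of $R_i$, $R_i(y, x_j)$) for some $i < \omega$ and some coordinate $x_j$ of $\bar{x}$; this is the case that needs an argument.

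In that case, writing $e := e_j$, we have $\chi(a') = 0$ iff $N \models R_i(e, a')$. Since ``$R_i$ is an equivalence relation'' is expressible by first-order sentences true in $M$ (and hence in $N$), and $R_i^N \cap M^2 = R_i^M$, the set $\chi^{-1}(0) = \{m \in M : N \models R_i(e, m)\}$ is either empty or exactly one $R_i$-class $E$ of $M$ (if $m_0 \in \chi^{-1}(0)$ then, by transitivity in $N$, it is the $R_i^M$-class of $m_0$). If it is empty, $\chi$ is constant and we are done; otherwise $\chi^{-1}(1) = M \setminus E$, and it remains to produce, for each $B \in \Age(M)$, an isomorphic copy of $B$ inside $M \setminus E$.

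For this final point, fix $B$ and any copy $B_1 \sub M$. Being finite, $B_1$ meets only finitely many $R_i$-classes, while $M$ has infinitely many (for each $n$, a set of $n$ pairwise $R_0$-inequivalent points lies in $\Age(M)$, and $R_i \sub R_0$); so there is an $R_i$-class $E^*$ disjoint from $B_1$. Since any two one-element substructures of $M$ are isomorphic, ultrahomogeneity provides $g \in \Aut(M)$ with $g(E^*) = E$, whence $g(B_1) \sub M \setminus E$ is the desired copy of $B$. This establishes the $\bar{a}$-RP for $\chi$, and hence, via \Cref{EDRPsimplecheck}, the externally definable Ramsey property of $M$. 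I do not anticipate a serious obstacle: the main (minor) care needed is in enumerating the atomic formulae that can arise and in checking that $\chi^{-1}(0)$ is a single $R_i$-class — and it is worth noting that the argument uses neither $\aleph_0$-saturation nor any strong amalgamation beyond strong JEP.
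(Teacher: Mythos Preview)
Your proposal is correct and follows essentially the same approach as the paper: both invoke \Cref{EDRPsimplecheck} (using QE and arity $2$) to reduce to externally atomic-definable vertex $2$-colourings, observe that the only nontrivial case is $\phi \equiv R_i(x,y)$, note that $\chi^{-1}(0)$ is contained in a single $R_i$-class of $M$, and then produce a copy of any $B \in \Age(M)$ avoiding that class. The only cosmetic difference is in the final step: the paper appeals directly to the extension property of $M$, whereas you find an arbitrary copy $B_1$, pick an $R_i$-class $E^*$ disjoint from it, and move $E^*$ onto $E$ by transitivity and ultrahomogeneity---both arguments are valid and equally short.
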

	\begin{proof}
            For $i < \omega$, let $\phi_i(x, y) \equiv x R_i y$. By \Cref{EDRPsimplecheck}, it suffices to check the vertex-Ramsey property for each $2$-colouring externally defined in an elementary extension by the formula $\phi_i(x, y)$ ($i < \omega$).
				
            Take $i < \omega$, and let $\chi_i : \binom{M}{a} \to 2$ be a vertex colouring externally defined by some $e \in N \succeq M$ and $\phi_i(x, y)$. Then the vertices of $M$ of colour $0$ (if indeed there are any) lie in a single $R_i$-equivalence class of $M$. Therefore, it suffices to show that for $m \in M, B \in \Age(M)$, there is $B' \in \binom{M}{B}$ with each vertex of $B'$ not in the $R_i$-equivalence class of $m$ in $M$. This follows immediately from the EP of $M$.
        \end{proof}

        In fact, by more general results, a stronger statement is true:

        \begin{prop}
            The structure $M$ is indivisible, and hence has the vertex-Ramsey property.
        \end{prop}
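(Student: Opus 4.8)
The plan is to establish the stronger statement — that $M$ is indivisible — and then to read off the vertex-Ramsey property from \Cref{pigeonindivlem,ageindivpointRamsey}, noting that $M$ is transitive since all one-element structures in $\Age(M)$ are isomorphic. For indivisibility I would use a tree picture of $M$. Let $T$ be the tree whose nodes of level $n\geq 1$ are the $R_{n-1}$-classes of $M$, with $M$ the root, a node lying below its $\subseteq$-subclasses; by the EP of $M$ every node of $T$ has infinitely many children, and writing $a\sim b$ for ``$a\,R_i\,b$ for all $i<\omega$'' the $\sim$-classes are exactly the branches of $T$ with non-empty intersection — each such class is infinite by $\aleph_0$-saturation, and there are only countably many of them as $M$ is countable. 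Conversely, a choice of a strong subtree $T'\subseteq T$ (that is, of infinitely many children of each node, with no levels skipped) together with an infinite set of elements chosen along ``enough'' of the realised branches of $T'$ determines a substructure of $M$ isomorphic to $M$.

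So, given $\chi:M\to 2$, I would first use the pigeonhole principle to assign to each $\sim$-class $D$ a colour $\gamma(D)\in 2$ with $D\cap\chi^{-1}(\gamma(D))$ infinite. It then suffices to find a colour $\gamma^\ast$ and a strong subtree $T'\subseteq T$ such that each node of $T'$ has infinitely many $\sim$-classes through it that are contained in $T'$ and satisfy $\gamma(D)=\gamma^\ast$: taking, along each such $D$, an infinite subset of $D\cap\chi^{-1}(\gamma^\ast)$ and forming the union yields a countable substructure $N\subseteq\chi^{-1}(\gamma^\ast)$; the ``infinitely many through each node'' condition together with the EP of $M$ makes $N$ satisfy the extension property with $\Age(N)=\Age(M)$, so $N\cong M$, as needed.

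The construction of $(\gamma^\ast,T')$ is the heart of the argument, and it is here that the main obstacle lies: the natural recursive notion — a node below which a $\gamma^\ast$-monochromatic copy of the corresponding substructure can be found — is not well-founded, since an $R_{n-1}$-class is again a copy of $M$ with $n$ relations collapsed, so a downward induction on levels never terminates and a naive level-by-level greedy choice need not succeed. I would resolve this by a fusion/reservoir argument: build $T'$ in $\omega$ stages, committing at stage $n$ only to a finite portion of its first $n$ levels while keeping, at each currently active node, an infinite supply of still-available children, and applying a $2$-colour pigeonhole among the realised branches at each split. This is precisely the device by which one shows that a suitable (infinitely) iterated lexicographic or wreath product of indivisible structures is again indivisible — $M$ being such an iterated product of copies of a countably infinite set carrying a single equivalence relation — and it is from that more general machinery, developed in the later sections on lexicographic and infinite products, that the proposition is obtained; the indivisibility of closely related ``generic tree'' structures is also classical.
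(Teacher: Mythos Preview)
Your proposal is correct and aligns with the paper's approach. The paper proves this proposition by writing the saturated $M$ as $M'[N]$, where $M'$ is the unsaturated version and $N$ is a pure set, then citing two facts: that a countable dense substructure of an infinite lexicographic product is indivisible (\Cref{infProdIndOmega}, giving indivisibility of $M'$), and that lexicographic products preserve indivisibility (\Cref{lex prod indivisible with s}). Your tree picture is an explicit unpacking of exactly this: colouring each $\sim$-class by a majority colour is the reduction to a colouring of $M'$ (the realised branches), and the ``fusion/reservoir'' step you sketch for building the strong subtree is the content of \Cref{infProdIndOmega} specialised to this case. You yourself note in the final paragraph that this is the machinery of the later sections, so the two proofs are the same in substance --- yours is simply a hands-on description of what the cited results amount to here, while the paper keeps the argument modular and defers the work to \Cref{lexsec}.
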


        To see this, we note that the saturated version of the structure $M$ with infinitely refining equivalence relations is isomorphic to the lexicographic product $M'[N]$, where $M'$ is the unsaturated version of the structure of infinitely refining equivalence relations described in the following section, and $N$ is a pure set in the language of equality. (See \Cref{lexsec}.) We will establish in \Cref{inf ref eq rel vertex-Ramsey} that $M'$ is indivisible, and $N$ is clearly indivisible as well. We then see from the following fact that $M'[N]$ is indivisible (since $M'[N]$ is a reduct of $M'[N]^s$), and thus in particular has the vertex-Ramsey property.

        \begin{fact}[{\cite[Proposition 2.21]{Mei16}}] \label{lex prod indivisible with s}
            If $M, N$ are indivisible structures, so is $M[N]^s$.
        \end{fact}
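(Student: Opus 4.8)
The plan is to prove indivisibility of $M[N]^s$ directly from the definition, by a two-stage pigeonholing argument: first inside each fibre, using indivisibility of $N$, and then across fibres, using indivisibility of $M$. Throughout, recall that the domain of $M[N]^s$ is $M \times N$, that the ``$s$''-expansion equips the lexicographic product $M[N]$ with an equivalence relation $E$ whose classes are exactly the fibres $F_m := \{m\} \times N$, and that each $F_m$, as an induced substructure, is isomorphic to $N$.

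First I would fix an arbitrary partition $M[N]^s = V_1 \cup V_2$. For each $m \in M$ this partition restricts to a partition of $F_m$ into two pieces, so since $N$ is indivisible there is a colour $\varepsilon(m) \in \{1,2\}$ and an embedding $g_m \colon N \hookrightarrow F_m$ whose image lies entirely in $V_{\varepsilon(m)}$; writing $g_m(n) = (m, \phi_m(n))$, this yields an embedding $\phi_m \colon N \to N$ for every $m \in M$. Now $\varepsilon \colon M \to \{1,2\}$ is a $2$-colouring of the vertices of $M$, so by indivisibility of $M$ there is a colour, say $1$, and an embedding $f \colon M \hookrightarrow M$ with $\varepsilon(f(m)) = 1$ for all $m$. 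I would then define $h \colon M[N]^s \to M[N]^s$ by $h(m,n) = g_{f(m)}(n) = (f(m), \phi_{f(m)}(n))$, and claim that $h$ is an embedding of $M[N]^s$ into itself whose image is contained in $V_1$.

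The inclusion of the image in $V_1$ is immediate, since $h(m,n) = g_{f(m)}(n) \in V_{\varepsilon(f(m))} = V_1$; and because an embedding of a relational structure into itself is an isomorphism onto the induced substructure on its image, this shows that $V_1$ contains an isomorphic copy of $M[N]^s$, which is exactly indivisibility. So the only real content is verifying that $h$ is indeed an embedding — and this is the step I expect to be the sole obstacle. It is a case analysis on a tuple $\big((m_1,n_1),\dots,(m_k,n_k)\big)$ according to the equality pattern of the first coordinates, carried out against the precise definition of the lexicographic product: when all the $m_i$ agree, the relation is determined ``inside the fibre'' and is preserved and reflected because each $\phi_m$ is an embedding $N \to N$; when the equality pattern among the $m_i$ is nontrivial, the relation is determined by the $M$-part (together with that pattern), and is preserved and reflected because $f$ is an embedding $M \to M$, in particular injective, so $f$ preserves and reflects which of the first coordinates coincide; and the new relation $E$ is handled by injectivity of $f$ alone. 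The one subtlety to watch is that the chosen definition of $M[N]$ treats ``mixed'' tuples (some first coordinates equal, some not) consistently, so that this case analysis closes up cleanly; granting that, the verification is pure bookkeeping and requires no new idea.
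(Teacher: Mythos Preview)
The paper does not actually prove this statement: it is recorded as a \emph{Fact} with a citation to \cite[Proposition 2.21]{Mei16}, so there is no in-paper proof to compare against. That said, your argument is correct and is the natural one; the two-stage pigeonholing (first within fibres using indivisibility of $N$, then across fibres using indivisibility of $M$) is exactly the expected approach, and your verification that $h(m,n) = (f(m), \phi_{f(m)}(n))$ is an $\mc{L}^s$-embedding goes through cleanly from the definition of the lexicographic product together with injectivity of $f$. One cosmetic point: the induced structure on each fibre $F_m$ is, strictly speaking, isomorphic to $N^s$ rather than $N$ (the $s$-relation is full on a fibre), but since indivisibility of $N$ is being invoked only for the $\mc{L}$-reduct, and any self-embedding of $N$ automatically preserves the full relation $s$, this is harmless.
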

  
    \subsection{Infinitely refining equivalence relations, unsaturated version}\label{sec: inf ref eq rel unsat}
        Let $\mc{L} = \{R_i : i < \omega\}$ with each $R_i$ binary. Let $\mc{K}$ be the class of finite $\mc{L}$-structures $A$ where:
        \begin{enumerate}
            \item for $i < \omega$, $R_i^A$ is an equivalence relation;
            \item for $i < j < \omega$, $R_j^A \sub R_i^A$;
            \item there exists $l < \omega$ such that the equivalence classes of $R_l^A$ are all singletons. 
        \end{enumerate}
        
        As in the previous example, it is straightforward to see that $\mc{K}$ is an amalgamation class, and letting $M$ denote the \Fr limit of $\mc{K}$, as before $\Th(M)$ has QE. As for the unsaturated rainbow graph, $M$ is not $\aleph_0$-saturated.

        By an analogous argument to that for the previous saturated example, we have that:
        
        \begin{prop}\label{inf ref eq rel edrp}
            The unsaturated structure $M$ has the externally definable Ramsey property.
        \end{prop}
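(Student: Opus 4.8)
The plan is to transcribe the proof of the saturated case (the analogous proposition in \Cref{satinfrefinERs}) almost verbatim, checking only that the features of the colourings used there persist in an unsaturated elementary extension. Since $\mc{L}$ is binary, $t = 2$, and since $\Th(M)$ has QE, \Cref{EDRPsimplecheck} reduces us to showing the vertex-Ramsey property ($|\bar a| = 1$) for every externally atomic-definable $2$-colouring. The relevant atomic formulae are, up to renaming of variables, $x = y$ and $x R_i y$ for $i < \omega$ (a formula using only $\bar x$-variables gives a constant colouring). For the equality case I would first note that $\mc{K} = \Age(M)$ has the strong JEP: given $A, B \in \mc{K}$ one takes their disjoint union and sets each $R_i$ equal to $R_i^A \sqcup R_i^B$, which again lies in $\mc{K}$ (the refining condition is preserved componentwise, and the larger of the two witnessing levels $l$ works); hence \Cref{equalityflastrivial} handles the colourings externally defined by atomic formulae involving equality.

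It then remains to treat, for each $i < \omega$, a colouring $\chi : \binom{M}{a} \to 2$ externally defined by some $e \in N \succeq M$ and $\phi_i(x,y) \equiv x R_i y$. The key point, exactly as in the saturated case, is that $\chi^{-1}(0) = \{a \in M : N \models e R_i a\}$ is contained in a single $R_i^M$-class: ``$R_i$ is an equivalence relation'' is a first-order consequence of $\Th(M)$, so it holds in $N$, whence $\{a \in N : N \models e R_i a\}$ is the $R_i^N$-class of $e$; if this class meets $M$ in some $a_0$, then by symmetry, transitivity and $M \preceq N$ its intersection with $M$ is precisely the $R_i^M$-class of $a_0$. (If $\chi^{-1}(0) = \varnothing$, then $\chi$ is constant and we are done.) So it suffices to show that for any $R_i^M$-class $E$ of $M$ and any $B \in \Age(M)$ there is $B' \in \binom{M}{B}$ with $B' \cap E = \varnothing$.

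For this I would use the extension property of $M$: fix $m \in E$ and let $B^+ \in \Age(M)$ be $B$ together with one new point $m^*$ forming its own singleton class at every level (so $\neg(m^* R_0 b)$, hence $\neg(m^* R_j b)$ for all $j$, for each $b \in B$); note $B^+ \in \mc{K}$, since the refining condition still holds and any witnessing level for $B$ is still one for $B^+$. The assignment $m^* \mapsto m$ is an embedding of the one-point substructure, so by the EP it extends to an embedding $g : B^+ \to M$; then $g|_B$ embeds $B$ into $M$ with image avoiding $E$, because $\neg(m^* R_0 b)$ in $B^+$ forces $\neg(m R_0 g(b))$ in $M$, hence $\neg(m R_i g(b))$ as $R_i^M \sub R_0^M$. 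Thus $g(B)$ is $\chi$-monochromatic of colour $1$, as required. I do not anticipate a genuine obstacle here: the only subtlety absent from the saturated argument is the verification that $R_i^N$ remains an equivalence relation so that the colour-$0$ set stays confined to one class, and this is immediate by first-order transfer, so the proof is a routine adaptation.
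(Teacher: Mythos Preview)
Your proposal is correct and follows exactly the approach the paper intends: the paper's own proof consists solely of the sentence ``By an analogous argument to that for the previous saturated example,'' and your write-up is a careful execution of precisely that analogy, supplying the extra verifications (strong JEP of $\Age(M)$, first-order transfer of the equivalence-relation axioms to $N$, and the explicit EP step via $B^+$) that the paper leaves implicit.
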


        In fact, we have a stronger statement.

        \begin{prop}\label{inf ref eq rel vertex-Ramsey}
            The structure $M$ is indivisible. In particular, it has the vertex-Ramsey property.
        \end{prop}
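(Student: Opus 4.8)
The plan is to handle the equivalence relations ``one level at a time'': each finite-level reduct of $M$ is built from pure sets by iterated lexicographic products, hence is indivisible by \Cref{lex prod indivisible with s}, and this is then lifted to $M$ itself by a tree argument reminiscent of Claim 2 in the proof of \Cref{FPTiffEDRP}. Throughout, $\mc{K} = \Age(M)$.

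\emph{Concrete model and reducts.} First I would realise $M$ explicitly as the set of finite-support functions $f\colon\omega\to\omega$, with $R_i(f,g)$ iff $f$ and $g$ agree on their first $i+1$ values; one checks via the extension property for $\mc{K}$ that this structure is isomorphic to the \Fr limit of $\mc{K}$. For $k<\omega$ let $N_k$ denote the $\{R_i:i<k\}$-reduct of $M$. Forgetting $R_k,R_{k+1},\dots$ makes the ``eventually singleton'' clause vacuous, so $\Age(N_k)$ is the class of all finite structures carrying $k$ refining equivalence relations; recursively inspecting the $R_0$-classes, one finds that $N_{k+1}$ is a reduct of the lexicographic product $N_0[N_k]^s$, where $N_0$ is a countably infinite pure set. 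Note also that each $R_0$-class of $M$, with the induced structure, is again isomorphic to $M$, and $M$ is transitive, hence so is each $N_k$.

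\emph{The reducts are indivisible.} I would prove by induction on $k$ that each $N_k$ is indivisible. The base case $k=0$ is clear. For the step, $N_0$ and $N_k$ are indivisible, so $N_0[N_k]^s$ is indivisible by \Cref{lex prod indivisible with s}, and therefore so is its reduct $N_{k+1}$, since a reduct of an indivisible structure is indivisible (a copy of the richer structure restricts to a copy of the reduct). Being transitive, each $N_k$ then has the vertex-Ramsey property for arbitrary $2$-colourings, by \Cref{ageindivpointRamsey}. Fix a $2$-colouring $c\colon M\to 2$. For $B\in\mc{K}$, choosing $k$ past the level at which $B$ becomes discrete lets us regard $B$ as an object of $\Age(N_k)$; a $c$-monochromatic copy of $B$ in $N_k$ is a subset of $M$ on which $R_0,\dots,R_{k-1}$ agree with $B$, and since $R_{k-1}$ is already discrete there, all higher $R_i$ are too, so it is in fact a $c$-monochromatic copy of $B$ as an $\mc{L}$-structure -- and, running the same argument after deleting a finite set, one that can be taken disjoint from any prescribed $F\fin M$. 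A König's lemma argument on the width-$\leq 2$ tree whose level-$n$ nodes record which colours are attained by $c$-monochromatic copies of $B_n$ (for an exhausting chain $B_0\subseteq B_1\subseteq\cdots$ of finite substructures of $M$) then produces a single colour $\lambda$ that works for every $B\in\mc{K}$ simultaneously.

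\emph{Assembling a copy of $M$.} Finally, inside $c^{-1}(\lambda)$ I would build an increasing chain $B_0'\subseteq B_1'\subseteq\cdots$ of finite substructures of $M$ whose union has the extension property for $\mc{K}$, hence is a copy of $M$: at each stage this requires realising a prescribed one-point extension of the current $c$-monochromatic configuration by a point of colour $\lambda$. Since $\mc{K}$ has strong amalgamation, such a one-point extension of $B_n'$ is realised by infinitely many mutually independent points of $M$, and combined with the uniformity of $\lambda$ from the previous step this is what should let one always find a realisation of colour $\lambda$. Once $M$ is known to be indivisible it is in particular age-indivisible (\Cref{pigeonindivlem}), and hence, being transitive, has the vertex-Ramsey property by \Cref{ageindivpointRamsey}. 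The hardest part is this last step: the finite monochromatic copies supplied above need not be coherent, so the greedy construction must be arranged so that committing to the colour $\lambda$ never blocks a later one-point extension; the leverage is the interplay between the strong-amalgamation freedom of $\mc{K}$ and the uniformity of $\lambda$ over all finite substructures extracted from the induction over the $N_k$.
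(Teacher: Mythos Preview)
Your first two stages are essentially correct: the concrete realisation of $M$ via finite-support sequences is exactly a countable dense substructure of the infinite lexicographic product $[M_i^s]_{i<\omega}$ with each $M_i$ a pure set, and the inductive argument via \Cref{lex prod indivisible with s} does show that each finite-level reduct $N_k$ is indivisible. This already yields age-indivisibility of $M$ and hence, by \Cref{ageindivpointRamsey}, the vertex-Ramsey property.

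The gap is in your Step 3, and it is a real one: age-universality of $c^{-1}(\lambda)$ does \emph{not} imply that $c^{-1}(\lambda)$ contains a copy of $M$, so the greedy one-point-extension scheme can genuinely get stuck. Concretely, take
\[
  S \;=\; \{\, f \in M : f(i) = 0 \text{ for all } i > f(0)\,\}.
\]
Then $S$ is age-universal (embed any $B \in \mc{K}$ of depth $l$ by sending its $R_0$-classes into $R_0$-classes of $S$ with index $\geq l$), yet $S$ contains no copy of $M$: the $n$-th $R_0$-class of $S$ has all $R_n$-classes singletons, whereas any embedded $R_0$-class of $M$ must contain distinct $R_k$-equivalent points for every $k$. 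If the $2$-colouring has $c^{-1}(0) = S$, your K\H{o}nig argument may well select $\lambda = 0$, and then no amount of strong amalgamation rescues the construction. The ``interplay'' you gesture at is not enough: one needs an argument that tracks which colour is rich \emph{at every level of the tree simultaneously}, not merely in the age.

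The paper's route avoids this entirely by recognising $M$ as a countable dense substructure of $[M_i^s]_{i<\omega}$ (each $M_i$ a pure set) and invoking \Cref{infProdIndOmega}, which is precisely the black-box indivisibility result for such dense substructures. The substantive combinatorics -- the recursive tree argument that correctly chooses the colour level by level -- lives inside the proof of that cited fact.
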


        This follows from \Cref{infProdIndOmega}: the proof of \Cref{inf ref eq rel vertex-Ramsey} relies on the fact that $M$ is a special case of the infinite lexicographic product, which we discuss in greater detail in \Cref{infinite products}. 
                
    \subsection{Disjoint random graphs} \label{disjrandgraphs}
        Let $\mc{L} = \{\sim\}$ be the language of graphs. Let $M = M_0 \cup M_1$ be the union of two disjoint copies $M_0, M_1$ of the random graph.

        We will see in this section that $M$ is an example of a structure which is not ultrahomogeneous, and without QE, which does have externally definable Ramsey. We will also see that $M$ has a $\varnothing$-definable expansion to an ultrahomogeneous structure $M_s$ which does not have externally definable Ramsey.

        To see that $M$ is not ultrahomogeneous, take $a, b \in M_0$ with $a \not\sim b$, and take $c \in M_1$. Then $f : \{a, b\} \to \{b, c\}$, $f(a) = c, f(b) = b$ is an isomorphism. But there is no automorphism of $M$ extending $f$, as $a, b$ have a common neighbour and $b, c$ do not.
			
        Let $\mc{L}_s = \mc{L} \cup \{s\}$ with $s$ binary. Let $M_s$ be the $\mc{L}_s$-structure given by the expansion of $M$ by the equivalence relation $s$ where vertices are equivalent iff they both lie in $M_0$ or both lie in $M_1$. It is easy to see that $M_s$ is ultrahomogeneous, so by \Cref{finLomega,finLQE}, we have that $\Th(M_s)$ is $\aleph_0$-categorical and has QE. So by the Ryll-Nardzewski theorem, $\Aut(M_s)$ is oligomorphic, and so $\Aut(M)$ is oligomorphic and thus $\Th(M)$ is $\aleph_0$-categorical. By \Cref{omegahomogiffQE}, as $M$ is not ultrahomogeneous, $\Th(M)$ does not have QE.
			
        \begin{prop}
            The $\mc{L}_s$-structure $M_s$ does not have the externally definable Ramsey property.
        \end{prop}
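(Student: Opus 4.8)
The plan is to reuse the idea behind part (2) of the equivalence-relation example (the structure $M_{2,\omega}$): exhibit a single externally definable vertex colouring of $M_s$ admitting no monochromatic copy of a suitable two-element structure. The key new feature is that the equivalence relation $s$ distinguishing $M_0$ from $M_1$ now belongs to the language $\mc{L}_s$, so colouring a point according to which $s$-class it lies in is not merely externally definable but $\varnothing$-definable over $M_s$ itself, hence a fortiori externally definable in the sense of \Cref{extdefcoldef}.

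Concretely, I would fix a vertex $e \in M_s$, say $e \in M_0$, set $\phi(x,y) \equiv s(x,y)$, and let $\chi : \binom{M_s}{a} \to 2$ be the vertex colouring externally defined by $e \in M_s$ and $\phi(x,y)$; that is, $\chi(a') = 0$ iff $M_s \models s(e,a')$. Then $\chi^{-1}(0)$ is the $s$-class of $e$, namely $M_0$, and $\chi^{-1}(1) = M_1$. Next, let $B \in \Age(M_s)$ be the two-vertex structure on $\{b_1, b_2\}$ with $\neg s(b_1,b_2)$ (the behaviour of the graph relation on $\{b_1,b_2\}$ is irrelevant); this $B$ lies in $\Age(M_s)$ since $M_s = M_0 \cup M_1$ allows us to send $b_1$ into $M_0$ and $b_2$ into $M_1$. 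Because $M_s$ has exactly two $s$-classes and $\neg s(b_1,b_2)$ holds, every copy $B' \subseteq M_s$ of $B$ has one vertex in $M_0$ and one in $M_1$, hence receives both colours under $\chi$; so no copy of $B$ is $\chi$-monochromatic. Thus $M_s \not\to_{\ext} (B)^a_2$, so $M_s$ fails the externally definable $a$-Ramsey property, and in particular fails the externally definable Ramsey property.

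I do not expect any genuine obstacle: the only verifications needed are that $B \in \Age(M_s)$ (immediate) and that $\chi$ is a legitimate externally definable colouring (immediate, since $s \in \mc{L}_s$). The actual content of the statement is the contrast with the $\mc{L}$-reduct $M$, which \emph{does} have the externally definable Ramsey property; the point of this proposition is that passing to the $\varnothing$-definable ultrahomogeneous expansion $M_s$ destroys that property, and making this precise requires nothing beyond the single colouring $\chi$ above. (Note that the argument uses only the relation $s$ and the fact that both classes are nonempty, not that the $s$-classes carry random-graph structure.)
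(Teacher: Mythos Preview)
Your proof is correct and follows essentially the same approach as the paper: both fix $e\in M_0$, use the formula $\phi(x,y)\equiv s(x,y)$ to externally define the vertex colouring $\chi$ with $\chi^{-1}(0)=M_0$ and $\chi^{-1}(1)=M_1$, and then observe that the two-point structure $B$ with $\neg s(b_0,b_1)$ lies in $\Age(M_s)$ but has no $\chi$-monochromatic copy. Your additional remarks about the contrast with the $\mc{L}$-reduct and the irrelevance of the random-graph structure on the $s$-classes are accurate and match the paper's surrounding discussion.
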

        \begin{proof}
            Take $e \in M_0$, and let $\phi(x, y) \equiv x\,\text{s}\,y$. Let $\chi: \binom{M_s}{a} \to 2$ be the vertex-colouring of $M_s$ externally defined by $e \in M_s$ and $\phi(x, y)$ (where the elementary extension of $M_s$ that we take is $M_s$ itself). Then $M_i = \chi^{-1}(i)$, $i = 0, 1$. 
				
            Take $b_0 \in M_0, b_1 \in M_1$, and let $B = \{b_0, b_1\}$ be the substructure of $M_s$ induced on $\{b_0, b_1\}$, so $(b_0, b_1) \not\in s$. Then there is no $\chi$-monochromatic copy of $B$ in $M_s$. 
        \end{proof}

        \begin{prop}
            The $\mc{L}$-structure $M$, the disjoint union of two random graphs, does not have FPT.
        \end{prop}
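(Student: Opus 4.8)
The plan is to argue directly that the flow $G \curvearrowright S_1(M)$, where $G := \Aut(M)$, has no $G$-fixed point, and hence no $G$-invariant $1$-type over $M$ exists. Since $S_1(M)$ is itself a non-empty, compact, $G$-invariant subset of $S_1(M)$, this is already a subflow with no fixed point, so $M$ fails $\mathrm{FPT}_1$, and therefore fails FPT. Note that we cannot shortcut this via \Cref{FPTiffEDRP}: that theorem requires $M$ to be ultrahomogeneous, which it is not, and in any case $M$ \emph{does} have the externally definable Ramsey property (this is why the behaviour here differs from $M_s$). So the argument must be hands-on, and the only real content is to see that the two random-graph components of $M$ are "visible" to the $1$-type space.

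Concretely, let $E(x,y)$ abbreviate the formula $x = y \vee \exists z\,(z \sim x \wedge z \sim y)$. Using the extension property of the random graph, one checks that $E^M$ is exactly the relation ``$x$ and $y$ lie in the same component'', with classes $M_0$ and $M_1$; in particular $E$ is provably (in $\Th(M)$) an equivalence relation, and, being $\varnothing$-definable, it satisfies $E^N \cap M^2 = E^M$ for any $N \succeq M$. Moreover the sentence $\exists x\,\exists y\,\bigl(\neg E(x,y) \wedge \forall z\,(E(z,x) \vee E(z,y))\bigr)$ holds in $M$, hence in every $N \succeq M$; combining this with $M \preceq N$ and $\neg E(a,b)$ for $a \in M_0$, $b \in M_1$, one deduces that every element of $N$ is $E$-equivalent either to all of $M_0$ or to all of $M_1$, but not both. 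In type-theoretic terms: every $p(x) \in S_1(M)$ contains exactly one of the partial types $\theta_0(x) := \{E(x,a) : a \in M_0\}$ and $\theta_1(x) := \{E(x,b) : b \in M_1\}$, so $S_1(M) = P_0 \sqcup P_1$ where $P_i := \{p : \theta_i \subseteq p\}$, and both $P_0, P_1$ are non-empty (e.g.\ $P_i$ contains $\tp(c/M)$ for any $c \in M_i$).

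Finally, since $M_0 \cong M_1$ (both copies of the random graph) there is $\sigma \in G$ with $\sigma M_0 = M_1$. If $p \in P_0$, then, using the action $\sigma \cdot p = \{\phi(x, \sigma \bar m) : \phi(x, \bar m) \in p\}$ from \Cref{typeflow}, we get $\{E(x, \sigma a) : a \in M_0\} = \theta_1 \subseteq \sigma \cdot p$, so $\sigma \cdot p \in P_1$; symmetrically $\sigma$ maps $P_1$ into $P_0$. Thus $\sigma$ has no fixed point in $S_1(M) = P_0 \sqcup P_1$, so no $1$-type over $M$ is $G$-invariant, and $M$ fails $\mathrm{FPT}_1$ and hence FPT. (If one prefers to work with orbit closures, the same conclusion follows by noting that $P_0$ and $P_1$ are closed, so any $q \in \overline{G\cdot p}$ with $p \in P_0$ lies in $P_0 \cup P_1$ and cannot be $\sigma$-fixed.) The main obstacle is really just the temptation to quote \Cref{FPTiffEDRP}; once a direct route is chosen, everything reduces to the elementary observation that $\Th(M)$ pins down ``exactly two components'' and that $M_0 \cong M_1$.
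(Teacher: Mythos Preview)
Your proof is correct and works directly on $S_1(M)$: you define the component relation $E$ by a $\varnothing$-formula, show that $\Th(M)$ forces exactly two $E$-classes which partition $S_1(M)$ into nonempty pieces $P_0, P_1$, and then observe that the component-swapping automorphism has no fixed point. This is a clean hands-on argument.

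The paper takes a different, more indirect route. It first observes the general fact that FPT passes to $\varnothing$-definable relational expansions (same automorphism group, and the type spaces of the expansion are quotients of the original ones via continuous $G$-maps). Since the expansion $M_s$ is ultrahomogeneous with countable age and was just shown to fail the externally definable Ramsey property, \Cref{FPTiffEDRP} gives that $M_s$ fails FPT; hence $M$ fails FPT too. So the paper's proof leans on the main theorem, applied to $M_s$ rather than $M$, together with a transfer principle.

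The trade-off: your argument is entirely elementary and self-contained, not invoking \Cref{FPTiffEDRP} at all, and it makes the obstruction completely explicit (a single automorphism with no fixed $1$-type). The paper's argument is shorter and isolates a reusable principle (FPT is inherited by $\varnothing$-definable expansions), which is arguably the conceptual point of the example. Your remark that one cannot apply \Cref{FPTiffEDRP} directly to $M$ is apt; the paper sidesteps this by moving to the ultrahomogeneous $M_s$ first.
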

        \begin{proof}
            It is straightforward to see that if a structure has FPT, then so does any $\varnothing$-definable relational expansion. As $M_s$ is a $\varnothing$-definable relational expansion of $M$ without FPT, we have that $M$ does not have FPT.
        \end{proof}

        \begin{prop}
            The $\mc{L}$-structure $M$, the disjoint union of two random graphs, has the externally definable Ramsey property.
        \end{prop}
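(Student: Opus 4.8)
The plan is to produce the required monochromatic copy inside a single one of the two copies of the random graph, say $M_0$, by showing that every externally definable colouring of $M$ restricts to an externally definable colouring of $M_0$ regarded as a random graph, and then invoking \Cref{EDRPbyNRbinary}.

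First I would note that $\Age(M)$ is exactly the class of all finite graphs, since every finite graph embeds into a single copy of the random graph; in particular $\Age(M) = \Age(M_0)$, so every $\bar{a}, B \in \Age(M)$ embeds into $M_0$. Next, recall that $s$ is $\varnothing$-definable in $\Th(M)$ ($x\,s\,y$ iff $x = y$, $x \sim y$, or $x$ and $y$ have a common neighbour), so any $N \succeq M$ carries a canonical expansion $N_s$ with $M_s \preceq N_s$; as $\Th(M_s)$ asserts that $s$ has exactly two classes, $N_s$ has exactly two $s$-classes $N_0 \supseteq M_0$ and $N_1 \supseteq M_1$, and a routine Tarski--Vaught argument (relativising quantifiers to $\{x : s(x, e_0)\}$ for a fixed $e_0 \in M_0$) shows $N_0 \succeq M_0$ as $\mathcal{L}$-structures.

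Now let $\bar{a}, B \in \Age(M)$ and $k \geq 1$, and let $\chi : \binom{M}{\bar{a}} \to k$ be externally definable, witnessed by $\bar{e} \in N \succeq M$ and $\phi_0(\bar{x}, \bar{y}), \dots, \phi_{k-1}(\bar{x}, \bar{y})$. Split $\bar{e} = (\bar{e}^0, \bar{e}^1)$ according to membership in $N_0$ or $N_1$, with $\bar{x} = (\bar{x}^0, \bar{x}^1)$ correspondingly. The key step is to absorb the ``remote'' parameters $\bar{e}^1$: since $\Th(M_s)$ has QE, replace each $\phi_i$ by an equivalent quantifier-free $\mathcal{L}_s$-formula, and then evaluate its atomic subformulae at an arbitrary tuple $\bar{a}'$ from $N_0$ --- an atom joining an $\bar{x}^1$-variable to an $\bar{x}^0$- or $\bar{y}$-variable is false (distinct $s$-classes), an atom among the $\bar{x}^1$-variables has a truth value fixed by $\qftp_{N_1}(\bar{e}^1)$, and an $s$-atom among the $\bar{x}^0$- and $\bar{y}$-variables is true, while the $=$- and $\sim$-atoms among the $\bar{x}^0$- and $\bar{y}$-variables are retained. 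Reading off the resulting Boolean combination yields quantifier-free $\mathcal{L}$-formulae $\psi_0(\bar{x}^0, \bar{y}), \dots, \psi_{k-1}(\bar{x}^0, \bar{y})$ with $N \models \phi_i(\bar{e}, \bar{a}') \Leftrightarrow N_0 \models \psi_i(\bar{e}^0, \bar{a}')$ for all $\bar{a}' \in \binom{M_0}{\bar{a}}$; hence the restriction $\chi|_{\binom{M_0}{\bar{a}}}$ is an externally definable $k$-colouring of the random graph $M_0$, witnessed by $\bar{e}^0 \in N_0 \succeq M_0$ and the $\psi_i$.

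Since $M_0$ is isomorphic to the random graph, by \Cref{EDRPbyNRbinary} it has the externally definable Ramsey property, so there is a copy $B' \subseteq M_0$ of $B$ with $\binom{B'}{\bar{a}}$ monochromatic in $\chi|_{\binom{M_0}{\bar{a}}}$; as $M_0$ is an induced substructure of $M$, this $B'$ is a $\chi$-monochromatic copy of $B$ in $M$, and since $\bar{a}, B, k, \chi$ were arbitrary, $M$ has the externally definable Ramsey property. The main obstacle is the middle step: verifying that parameters coming from the other copy $M_1$ (equivalently, from the other $s$-class of the elementary extension) cause no harm, so that the restricted colouring really is externally definable over $M_0$ alone. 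This is a Feferman--Vaught-type phenomenon for the disjoint union, made transparent by the quantifier elimination for $\Th(M_s)$; note also that the machinery of \Cref{EDRPsimplecheck} is unavailable here precisely because $M$ fails to be ultrahomogeneous.
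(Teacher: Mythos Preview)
Your proof is correct and follows the same overall plan as the paper: exploit that $\Age(M)=\Age(M_0)$, that $s$ is $\varnothing$-definable, and that $\Th(M_s)$ has QE, then find the monochromatic copy inside $M_0$. The execution, however, is genuinely different. The paper first reduces (via \Cref{boolcomb} and ``a straightforward adaptation of \Cref{EDRPt}'') to $2$-colourings externally defined by the single atoms $x\sim y$ and $\sigma(x,y)$, and then observes that such a colouring restricts to a point-colouring of $M_0$, finishing with the \emph{point}-Ramsey property of the random graph. You instead take an arbitrary externally definable $k$-colouring and, using QE for $\Th(M_s)$ together with a Feferman--Vaught-style elimination of the parameters $\bar e^1$ lying in the other $s$-class, show directly that its restriction to $\binom{M_0}{\bar a}$ is again externally definable over $M_0$ (via $\bar e^0\in N_0\succeq M_0$), and then invoke the full externally definable Ramsey property of $M_0$ from \Cref{EDRPbyNRbinary}. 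Your route is more self-contained: it sidesteps the need to adapt \Cref{boolcomb} and \Cref{EDRPt} to the non-ultrahomogeneous $M$ (an adaptation the paper does not spell out), at the cost of using a stronger property of $M_0$ (its full EDRP rather than just point-Ramsey). The Tarski--Vaught step giving $N_0\succeq M_0$ and the atom-by-atom analysis are both sound as you describe them.
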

        \begin{proof}
            Take the $\mc{L}$-formula $\sigma(x, y) \equiv (\ex z)(z \sim x \wedge z \sim y)$. Then the equivalence relation $s$ is $\varnothing$-definable in $M$ by $\sigma(x, y)$. As $\Th(M_s)$ has QE, we have that $\Th(M)$ has ``QE up to $\sigma(x, y)$", i.e.\ each $\mc{L}$-formula is equivalent modulo $\Th(M)$ to a Boolean combination of $x \sim y$ and $\sigma(x, y)$. Therefore, by \Cref{boolcomb} and a straightforward adaptation of \Cref{EDRPt}, it suffices to check the externally definable RP for $M$ for colourings externally defined by $x \sim y$ and by $\sigma(x, y)$. For a colouring $\chi : \binom{M}{a} \to 2$ externally defined by $e \in N \succeq M$ and $\phi(x, y)$, where $\phi(x, y) \equiv x \sim y$ or $\phi(x, y) = \sigma(x, y)$, we have that $\chi$ restricts to a colouring on $M_0$, and as $\Age(M) \sub \Age(M_0)$ and $M_0$ has the point-Ramsey property, we are done.
        \end{proof}

\section{The lexicographic product} \label{lexsec}
    
    \begin{defn} \label{lexdefn}
        Let $M, N$ be $\mc{L}$-structures. The \emph{lexicographic product} $M[N]$ is the $\mc{L}$-structure with domain $M \times N$ where for each $n$-ary relation symbol $R \in \mc{L}$, we define that the $n$-tuple $((a_1, b_1), \cdots, (a_n, b_n))$ lies in $R^{M[N]}$ if one of the following holds:
            \begin{itemize}
                \item there exist $i \neq j$ with $a_i \neq a_j$, and $M \models (a_1, \cdots, a_n)$; or
                \item $a_1 = \cdots = a_n$ and $N \models (b_1, \cdots, b_n)$.
            \end{itemize}

        If $M$ and $N$ are structures in different languages, we take the lexicographic product by taking the union of the respective languages, then expand each structure by taking the new relations to be $\varnothing$, and subsequently take the lexicographic product.
    \end{defn}

    This construction generalises the lexicographic order and the lexicographic product of graphs. More generally, in a binary language, $M[N]$ coincides with a classical construction denoted by the same notation.
  
    Let $\mc{L}^s$ be the expansion of $\mc{L}$ by a new binary relation symbol $s$, and let $M[N]^s$ denote the $\mc{L}^s$-structure which is the expansion of the $\mc{L}$-structure $M[N]$ by:
    \[M[N]^s \models s((a_1, b_1), (a_2, b_2)) \iff a_1 = a_2.\] Let $N^s$ denote the $\mc{L}^s$-structure resulting from expanding $N$ by the binary relation $s = N^2$. Then $M[N]^s = M[N^s]$.

    We recall the following from \cite{Mei16}.
	
    \begin{prop}[{\cite[Proposition 2.23]{Mei16}}]\label{boolean combination in product}
        Let $M, N$ be $\mc{L}$-structures, and assume $\Th(M)$ is transitive (i.e.\ any two points of $M$ are isomorphic as substructures).
        
        Let $\phi(\bar{x})$ be an $\mc{L}^s$-formula. Then there exist $k \in \N$ and $\mc{L}$-formulae $(\phi_1^j(\bar{x}))_{j < k}, (\phi_2^j(\bar{x}))_{j < k}$ such that, for every tuple $\ov{(a, b)} \in M[N]^s$ with $|\ov{(a, b)}| = |\bar{x}|$, we have that:
        \[M[N]^s \models \phi(\ov{(a, b)}) \iff \bigvee_{j < k} (M \models \phi_1^j(\ov{a}) \wedge N \models \phi_2^j(\ov{b})).\]
    \end{prop}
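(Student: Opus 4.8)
The plan is to prove the statement by induction on the complexity of the $\mc{L}^s$-formula $\phi$, carrying it through simultaneously for every tuple of free variables; the transitivity of $\Th(M)$ is in force throughout (as in \cite[Proposition 2.23]{Mei16}), and is used only to ensure that the $s$-classes of $M[N]^s$ are uniformly isomorphic copies of $N$, so that the atomic case below does not depend on the chosen $M$-coordinate. Call a formula on the right-hand side of the displayed equivalence a \emph{split form}; I want to show every $\mc{L}^s$-formula is equivalent over $M[N]^s$ to a split form, which is exactly the assertion. For the base case, there are three kinds of atomic formula. If $\phi$ is $x_i = x_j$, then $(a_i,b_i)=(a_j,b_j)$ iff $a_i=a_j$ and $b_i=b_j$, so $k=1$ with $\phi_1^0=\phi_2^0=(x_i=x_j)$. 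If $\phi$ is $s(x_i,x_j)$, then by the definition of $M[N]^s$ we need only $a_i=a_j$, so $k=1$ with $\phi_1^0=(x_i=x_j)$ and $\phi_2^0=(x_i=x_i)$. If $\phi$ is $R(x_{i_1},\dots,x_{i_n})$ for $n$-ary $R\in\mc{L}$, then \Cref{lexdefn} says the image tuple lies in $R^{M[N]}$ iff \emph{either} some two of $a_{i_1},\dots,a_{i_n}$ are distinct and $M\models R(a_{i_1},\dots,a_{i_n})$, \emph{or} all of them coincide and $N\models R(b_{i_1},\dots,b_{i_n})$; as ``some two of the listed coordinates are distinct'' and ``all of them coincide'' are $\mc{L}$-formulas, this is a split form with $k=2$.

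For Boolean combinations, the device is a normal form for the ``$M$-side''. Given a split form $\bigvee_{j<k}(M\models\phi_1^j\wedge N\models\phi_2^j)$, for $S\subseteq k$ put $\alpha_S(\bar x)=\bigwedge_{j\in S}\phi_1^j(\bar x)\wedge\bigwedge_{j\notin S}\neg\phi_1^j(\bar x)$. For any $\bar a$ from $M$ exactly one $\alpha_S$ holds, and on that $\alpha_S$ the split form reduces to $N\models\bigvee_{j\in S}\phi_2^j$; so every split form is equivalent to one, namely $\bigvee_{S\subseteq k}(M\models\alpha_S\wedge N\models\bigvee_{j\in S}\phi_2^j)$, whose $M$-parts are pairwise inconsistent and exhaustive. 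In this normalised shape, negation is immediate — replace each $N$-part $\bigvee_{j\in S}\phi_2^j$ by $\bigwedge_{j\in S}\neg\phi_2^j$ — conjunction of two split forms is handled by taking all pairwise meets of disjuncts (with $M$-parts $\phi_1^j\wedge\theta_1^l$ and $N$-parts $\phi_2^j\wedge\theta_2^l$), and disjunction is concatenation.

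For quantifiers it suffices to treat $\exists$, since $\forall y$ is $\neg\exists y\,\neg$. Suppose $\psi(\bar x,y)$ has split form $(\phi_1^j,\phi_2^j)_{j<k}$ and set $\phi(\bar x)=\exists y\,\psi(\bar x,y)$. A value of $y$ is a pair $(u,v)\in M\times N$, with $u$ feeding the $M$-parts and $v$ the $N$-parts. Distributing $\exists$ over the disjunction and using that $M$ and $N$ are non-empty to split $\exists(u,v)$ into $\exists u$ and $\exists v$, one gets
\[
M[N]^s \models \phi(\ov{(a,b)}) \iff \bigvee_{j<k}\bigl(M\models\exists y\,\phi_1^j(\bar a,y)\;\wedge\;N\models\exists y\,\phi_2^j(\bar b,y)\bigr),
\]
again a split form. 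This closes the induction.

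\emph{Main obstacle.} The substance is the Boolean normalisation and the negation step it enables: seeing that passing to the atoms $\alpha_S$ is the right manoeuvre and that it meshes with the conjunction and quantifier cases. It is also where the bound $k$ grows — a single negation can turn $k$ disjuncts into $2^k$, so $k$ in general is a tower of exponentials in the quantifier depth. The existential step looks routine but has one genuine subtlety: a single $\mc{L}^s$-variable corresponds to a \emph{pair}, so the quantifier must be distributed across the two factors, and this is legitimate only because the inductive hypothesis supplies the decomposition uniformly for \emph{all} tuples, in particular those obtained by appending the new element.
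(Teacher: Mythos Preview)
The paper does not give its own proof of this proposition; it is quoted verbatim from \cite[Proposition~2.23]{Mei16}. Your induction on formula complexity is the standard Feferman--Vaught style argument and is correct: the atomic cases are read off from \Cref{lexdefn} and the definition of $s$, the normalisation of the $M$-parts into the mutually exclusive atoms $\alpha_S$ is exactly the right device for negation, and in the existential step the key point --- that $\exists(u,v)\bigl(P(\bar a,u)\wedge Q(\bar b,v)\bigr)$ splits as $(\exists u\,P(\bar a,u))\wedge(\exists v\,Q(\bar b,v))$ because the two conjuncts have disjoint dependence --- is stated and used correctly.

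One small observation: as you have written it, the proof never actually invokes the transitivity of $\Th(M)$. Your atomic case for $R\in\mc{L}$ goes through directly from \Cref{lexdefn}, and that definition already makes every fibre $\{a\}\times N$ an isomorphic copy of $N$ as an $\mc{L}$-structure, independently of any hypothesis on $M$. So either your argument gives a mild strengthening of the statement, or transitivity enters in \cite{Mei16} for reasons tied to that paper's particular setup rather than to the equivalence itself; in any case your claim that it is ``used only'' in the atomic step does not match what your proof does.
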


    \begin{lem}\label{types in prod depend on types in factors}
        Let $M, N$ be $\mc{L}$-structures, and assume $\Th(M)$ is transitive. Let $A\subseteq M$ and $B\subseteq N$ with $A, B \neq \varnothing$. Let $\bar{c},\bar{c}'\in M$ and $\bar{d},\bar{d}'\in N$. Suppose that $\tp(\ov{c}/A) = \tp(\ov{c}'/A)$ and $\tp(\ov{d}/B) = \tp(\ov{d}'/B)$. Then $\tp(\ov{(c,d)}/A[B]^s) = \tp(\ov{(c',d')}/A[B]^s)$.
    \end{lem}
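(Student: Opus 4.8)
The plan is to reduce the statement to \Cref{boolean combination in product} by absorbing the parameters into the formula. Recall that $A[B]^s$ has underlying set $A \times B$, so a tuple of parameters from it is of the form $\ov{(a,b)} = ((a_1,b_1),\dots,(a_l,b_l))$ with $\bar a = (a_1,\dots,a_l) \in A^l$ and $\bar b = (b_1,\dots,b_l) \in B^l$ (arbitrary, with repetitions allowed, and the two lists independent). So to prove that the two types over $A[B]^s$ coincide it suffices to fix such an $\ov{(a,b)}$ together with an $\mc{L}^s$-formula $\psi(\bar x, \bar y)$ with $|\bar y| = l$, and show
\[
M[N]^s \models \psi(\ov{(c,d)}, \ov{(a,b)}) \iff M[N]^s \models \psi(\ov{(c',d')}, \ov{(a,b)}).
\]

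To establish this, I would view $\psi(\bar x, \bar y)$ as a single $\mc{L}^s$-formula $\psi(\bar z)$ in the concatenated variable tuple $\bar z$, and apply \Cref{boolean combination in product} to it (this is the one place the transitivity of $\Th(M)$ is used): there are $k \in \N$ and $\mc{L}$-formulae $(\phi_1^j(\bar z))_{j<k}$, $(\phi_2^j(\bar z))_{j<k}$, depending only on $\psi$, such that for every tuple $\ov{(u,v)} \in M[N]^s$ of the appropriate length,
\[
M[N]^s \models \psi(\ov{(u,v)}) \iff \bigvee_{j<k}\bigl(M \models \phi_1^j(\bar u) \,\wedge\, N \models \phi_2^j(\bar v)\bigr).
\]
Since the finite list $(\phi_1^j, \phi_2^j)_{j<k}$ does not depend on the instantiation, I can apply this equivalence both to $\ov{(u,v)} = (\ov{(c,d)}, \ov{(a,b)})$, where $\bar u = (\bar c, \bar a)$ and $\bar v = (\bar d, \bar b)$, and to $\ov{(u,v)} = (\ov{(c',d')}, \ov{(a,b)})$, where $\bar u = (\bar c', \bar a)$ and $\bar v = (\bar d', \bar b)$.

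It then remains to compare the two disjunctions termwise. For each $j<k$: since $\bar a$ lists elements of $A$ and $\tp(\bar c/A) = \tp(\bar c'/A)$, we have $M \models \phi_1^j(\bar c, \bar a) \iff M \models \phi_1^j(\bar c', \bar a)$; symmetrically, since $\bar b$ lists elements of $B$ and $\tp(\bar d/B) = \tp(\bar d'/B)$, we have $N \models \phi_2^j(\bar d, \bar b) \iff N \models \phi_2^j(\bar d', \bar b)$. Hence the two disjunctions have the same truth value, which yields the displayed biconditional and so the lemma. I do not expect a real obstacle here: the only slightly delicate points are the bookkeeping with tuples of pairs and remembering to fold the parameter variables into $\psi$ before invoking \Cref{boolean combination in product}; the hypotheses $A,B \neq \varnothing$ are needed only so that $A[B]^s$ is a genuine nonempty substructure over which to take types.
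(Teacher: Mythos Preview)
Your proposal is correct and follows essentially the same route as the paper: fix a formula with parameters from $A[B]^s$, apply \Cref{boolean combination in product} to the concatenated tuple to split into $M$- and $N$-conditions, and then use the assumed type equalities termwise. The only difference is cosmetic bookkeeping (you place the object variables before the parameter variables, the paper does the reverse).
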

    \begin{proof}
        Let $\phi(\ov{x},\ov{z})$ be an $\mc{L}^s$-formula such that $|\ov{z}| = |\ov{(c,d)}|$. By \Cref{boolean combination in product}, there exist $k \in \N$ and $\mc{L}$-formulae $(\phi_1^j(\ov{x},\ov{z}))_{j < k}, (\phi_2^j(\ov{x},\ov{z}))_{j < k}$ such that, for every pair of tuples $\ov{(\alpha, \beta)},\ov{(\gamma,\delta)} \in M[N]^s$ with $|\ov{(\alpha, \beta)}| = |\ov{x}|$ and $|\ov{(\gamma, \delta)}| = |\ov{z}|$, we have: 
        \[M[N]^s \models \phi(\ov{(\alpha, \beta)},\ov{(\gamma,\delta)}) \iff \bigvee_{j < k} (M \models \phi_1^j(\ov{\alpha},\ov{\gamma}) \wedge N \models \phi_2^j(\ov{\beta},\ov{\delta})).\]
        Now let $\ov{(a,b)}\in A[B]^s$ with $|\ov{(a,b)}|=|\ov{x}|$. Then
        \begin{align*}
            M[N]^s\models \phi(\ov{(a,b)},\ov{(c,d)})   \iff 
               &  \bigvee_{j < k} (M \models \phi_1^j(\ov{a},\ov{c}) \wedge N \models \phi_2^j(\ov{b},\ov{d}))  \\
            \iff & \bigvee_{j < k} (M \models \phi_1^j(\ov{a},\ov{c'}) \wedge N \models \phi_2^j(\ov{b},\ov{d'})) \\
            \iff&  M[N]^s\models \phi(\ov{(a,b)},\ov{(c',d')}).
        \end{align*}
    \end{proof}    
    
    \begin{lem}[{\cite[Lemma 2.13]{Mei16}}]\label{automorphism of wreath product}
        Let $M, N$ be $\mc{L}$-structures. Let $f \in \Aut(M)$, and for each $a \in M$, let $g_a \in \Aut(N)$. Then the function $F : M[N]^s \to M[N]^s$ given by $F(x, y) = (f(x), g_x(y))$) is an automorphism of $M[N]^s$.

        Conversely, if $\Th(M)$ is transitive, then for every $F \in \Aut(M[N]^s)$, there exists an automorphism $f \in \Aut(M)$ and a family of automorphisms $(g_a \in \Aut(N) : a \in M)$ such that for all $(x, y) \in M[N]^s$, $F(x, y) = (f(x), g_x(y))$. 
        
        In other words, if $\Th(M)$ is transitive, then \[\Aut(M[N]^s) = \Aut(M) \Wr_M \Aut(N),\] where $\Wr$ denotes the wreath product.
    \end{lem}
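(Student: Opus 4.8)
The plan is to prove the two implications separately and then note that the displayed equality is a routine repackaging of what has been shown. For the first assertion, given $f \in \Aut(M)$ and a family $(g_a)_{a \in M}$, the map $F(x,y) = (f(x), g_x(y))$ is a bijection, with two-sided inverse $(x,y) \mapsto (f^{-1}(x), g_{f^{-1}(x)}^{-1}(y))$. To check $F$ preserves structure, the key observation is that, since $f$ is a bijection, the first coordinates $a_1, \dots, a_n$ of a tuple $((a_1,b_1), \dots, (a_n,b_n))$ are all equal if and only if the first coordinates $f(a_1), \dots, f(a_n)$ of its $F$-image are all equal; hence the same clause of \Cref{lexdefn} governs a tuple and its image. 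Preservation of $s$ is then immediate; for an $n$-ary $R \in \mc{L}$ and a tuple whose first coordinates are not all equal it reduces to $M \models R(\bar a) \iff M \models R(f\bar a)$, true since $f \in \Aut(M)$; and for a tuple with $a_1 = \dots = a_n = a$ we have $g_{a_1} = \dots = g_{a_n} = g_a$, and it reduces to $N \models R(\bar b) \iff N \models R(g_a \bar b)$, true since $g_a \in \Aut(N)$.

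For the converse, let $F \in \Aut(M[N]^s)$. Since $F$ preserves $s$ and the $s$-classes are precisely the columns $\{a\} \times N$ for $a \in M$, the map $F$ permutes these columns, yielding a bijection $f : M \to M$ with $F(\{a\} \times N) = \{f(a)\} \times N$ and, for each $a$, a bijection $g_a : N \to N$ given by $g_a(y) = \pi_N(F(a,y))$, so that $F(x,y) = (f(x), g_x(y))$ by construction. That $g_a \in \Aut(N)$: for $R \in \mc{L}$ of arity $n$ and $\bar b \in N$, the tuple $((a,b_1), \dots, (a,b_n))$ has all first coordinates equal, so by clause (ii) of \Cref{lexdefn} it lies in $R^{M[N]}$ iff $N \models R(\bar b)$; applying the $R^{M[N]}$-preserving map $F$ and using clause (ii) again (the image has all first coordinates equal to $f(a)$) gives $N \models R(\bar b) \iff N \models R(g_a \bar b)$. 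That $f \in \Aut(M)$: for $R$ of arity $n$ and $\bar a$ not all equal, pick any $\bar b \in N$; then $((a_1,b_1),\dots,(a_n,b_n))$ is governed by clause (i), as is its $F$-image since $f$ is injective, so $M \models R(\bar a) \iff M \models R(f\bar a)$; and for a constant tuple $\bar a = (a, \dots, a)$ the transitivity of $\Th(M)$ gives $M \models R(a,\dots,a) \iff M \models R(f(a),\dots,f(a))$, since $\{a\}$ and $\{f(a)\}$ are isomorphic as substructures of $M$. Hence $f \in \Aut(M)$.

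Finally, the displayed equality asserts that $(f, (g_a)_{a \in M}) \mapsto F$, with $F(x,y) = (f(x), g_x(y))$, is a group isomorphism from $\Aut(M) \Wr_M \Aut(N)$ --- the group with underlying set $\{(f, (g_a)_{a \in M}) : f \in \Aut(M),\ g_a \in \Aut(N)\}$ and the operation mirroring composition of the corresponding maps $F$ --- onto $\Aut(M[N]^s)$; the two implications above show this assignment is a well-defined bijection, and checking that it intertwines the wreath-product operation with composition of automorphisms is a direct computation from the formula for $F$. I expect the only real obstacle to be bookkeeping: tracking which of the two clauses of \Cref{lexdefn} applies --- it is precisely the bijectivity of $f$ that makes the governing clause the same for a tuple and its image --- and remembering that the degenerate constant-tuple case genuinely requires the transitivity hypothesis; no step here is deep.
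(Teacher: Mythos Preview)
The paper does not supply its own proof of this lemma; it is quoted from \cite[Lemma 2.13]{Mei16}. Your argument is correct and is the expected one: the forward direction is a direct verification against the two clauses of \Cref{lexdefn} (using that $f$ is a bijection so that a tuple and its image are governed by the same clause), and for the converse you correctly use preservation of $s$ to extract $f$ and the $g_a$, then verify $g_a \in \Aut(N)$ via clause (ii) and $f \in \Aut(M)$ via clause (i), with transitivity of $\Th(M)$ invoked exactly where it is needed, namely for constant tuples $(a,\dots,a)$.
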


    \begin{lem}[{\cite[Lemma 1.6.5]{Mei19}}]\label{saturation in product}
        Let $M,N$ be $\mc{L}$-structures, and let $\kappa$ be a cardinal. The following are equivalent:
        \begin{enumerate}
            \item $M, N$ are $\kappa$-saturated.
            \item $M[N]^s$ is $\kappa$-saturated.
        \end{enumerate}
    \end{lem}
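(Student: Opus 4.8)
The plan is to transfer complete types back and forth between $M[N]^s$ and its two factors, using the Feferman--Vaught-style decomposition \Cref{boolean combination in product} and its consequence \Cref{types in prod depend on types in factors} as the engine (we note, and will use, that these carry a transitivity hypothesis on $\Th(M)$, which is satisfied in all cases of interest). We may assume $\kappa$ is infinite, the finite case being trivial, and throughout we use the standard reduction of $\kappa$-saturation to the realisation of complete $1$-types over parameter sets of size $<\kappa$ (realise one coordinate, adjoin it to the parameters, repeat), together with the standard fact --- itself a consequence of \Cref{boolean combination in product} --- that $M \preceq M'$ and $N \preceq N'$ imply $M[N]^s \preceq M'[N']^s$.

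For $(1)\Rightarrow(2)$, assume $M$ and $N$ are $\kappa$-saturated, let $A \sub M[N]^s$ with $|A| < \kappa$, and let $p(x) \in S_1(A)$. Set $A_M = \pi_1(A) \sub M$ and $A_N = \pi_2(A) \sub N$ (enlarging each to be non-empty if necessary); as $\kappa$ is infinite, $|A_M[A_N]^s| = |A_M| \cdot |A_N| < \kappa$, so, extending $p$ to a complete type over $A_M[A_N]^s$, we may assume $A = A_M[A_N]^s$. Realise $p$ in some $M^*[N^*]^s \succeq M[N]^s$ (with $M^* \succeq M$, $N^* \succeq N$) by an element $(c, d)$, where $c \in M^*$ and $d \in N^*$. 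By $\kappa$-saturation of $M$ pick $c_0 \in M$ with $\tp_M(c_0/A_M) = \tp_{M^*}(c/A_M)$, and by $\kappa$-saturation of $N$ pick $d_0 \in N$ with $\tp_N(d_0/A_N) = \tp_{N^*}(d/A_N)$. Then \Cref{types in prod depend on types in factors}, applied inside $M^*[N^*]^s$, gives $\tp((c_0, d_0)/A_M[A_N]^s) = \tp((c, d)/A_M[A_N]^s) \ni p$, so $(c_0, d_0) \in M[N]^s$ realises $p$.

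For $(2)\Rightarrow(1)$, assume $M[N]^s$ is $\kappa$-saturated. \emph{$N$ is $\kappa$-saturated}: fix $a \in M$, and recall that the $\mc{L}$-structure induced on the $s$-class $\{a\} \times N$ is isomorphic to $N$. Given $B \sub N$ with $|B| < \kappa$ and $q(y) \in S_1(B)$, relativise every formula of $q$ to the $s$-class of its free variable (legitimate since $s$ is an equivalence relation and that class carries a copy of $N$), and adjoin $s(x, (a, b_0))$ for a fixed $b_0 \in B$; this produces a consistent $\mc{L}^s$-type $\tilde{q}(x)$ over $\{a\} \times B$, which by $\kappa$-saturation of $M[N]^s$ is realised by some $(a', d)$. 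Then $s(x,(a,b_0))$ forces $a' = a$, and faithfulness of the relativisation gives $d \models q$ in $N$. \emph{$M$ is $\kappa$-saturated}: fix $b_0 \in N$; given $C \sub M$ with $|C| < \kappa$ and $r(x_1) \in S_1(C)$, we may assume $r$ contains $x_1 \neq c$ for each $c \in C$ (otherwise $r$ is trivially realised). Using \Cref{boolean combination in product}, translate $r$ into an $\mc{L}^s$-type $\tilde{r}(x)$ over $C \times \{b_0\}$, adjoining $\neg s(x, (c, b_0))$ for each $c \in C$, in such a way that every realisation $(c, d) \in M[N]^s$ has $c \notin C$ and $\tp_M(c/C) = r$; this is possible because the $\mc{L}^s$-type over $C \times \{b_0\}$ of an element whose first coordinate lies outside $C$ is controlled by the $\mc{L}$-type of that coordinate over $C$. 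Realising $\tilde{r}$ in $M[N]^s$ and taking the first coordinate yields a realisation of $r$ in $M$.

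The main obstacle is the precise justification of these translation steps: that an $\mc{L}^s$-type over a parameter set of the shape $A_M[A_N]^s$ is determined by an $\mc{L}$-type over $A_M$ in $M$ together with an $\mc{L}$-type over $A_N$ in $N$, and conversely that factor-types lift to product-types --- in particular that quantifiers may be relativised to $s$-classes, and that the behaviour of relations on constant (diagonal) tuples causes no mismatch between the factors. All of this is exactly the content of the Feferman--Vaught analysis encapsulated in \Cref{boolean combination in product,types in prod depend on types in factors}; once those are invoked, what remains is the routine case split above according to the $s$-class of the realisation.
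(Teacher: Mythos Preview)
The paper does not give its own proof of this lemma: it is quoted from \cite[Lemma 1.6.5]{Mei19} and stated without argument. So there is no in-paper proof to compare against; I can only assess your proposal on its own terms.

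Your strategy via the Feferman--Vaught decomposition is the natural one and is essentially sound, but two points deserve comment. First, you explicitly import the transitivity hypothesis on $\Th(M)$ needed for \Cref{boolean combination in product} and \Cref{types in prod depend on types in factors}; the lemma as stated carries no such hypothesis, so what you actually prove is a restricted version. You flag this (``satisfied in all cases of interest''), and indeed the only use of the lemma in the paper is inside \Cref{elementary extension product}, which is in turn only invoked under a transitivity assumption, so no harm is done --- but you have not proved the lemma as written. Second, in the $(1)\Rightarrow(2)$ direction you realise $p$ in an elementary extension \emph{of product form} $M^*[N^*]^s$. You justify this by the implication ``$M\preceq M'$ and $N\preceq N'$ $\Rightarrow$ $M[N]^s\preceq M'[N']^s$'', but that alone does not guarantee that an \emph{arbitrary} type over $M[N]^s$ is realised in some product-form extension; in the paper, the statement that every elementary extension of $M[N]^s$ sits inside one of product form is \Cref{elementary extension product}, whose proof \emph{uses} the present lemma, so invoking it here would be circular. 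The clean fix is to observe that ultrapowers commute with the construction $(-)[-]^s$ (an easy direct check), so a sufficiently saturated ultrapower of $M[N]^s$ already has the form $M^*[N^*]^s$ with $M^*\succeq M$, $N^*\succeq N$ and realises $p$. With that adjustment your argument goes through.
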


   \begin{lem}[{\cite[Corollary 1.2.5]{Mei19}}]\label{elementary equivalence product}
        Let $M, M', N, N'$  be $\mc{L}$-structures. If $M\equiv M'$ and $N\equiv N'$, then $M[N]^s \equiv M'[N']^s$.
    \end{lem}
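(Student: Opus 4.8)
The plan is an Ehrenfeucht--Fra\"{i}ss\'{e} argument of Feferman--Vaught type, assembling a winning strategy for Duplicator on the product from winning strategies on the two factors; note that this avoids the transitivity hypothesis present in \Cref{boolean combination in product} and \Cref{types in prod depend on types in factors}, which is essential since no transitivity is assumed here. First I would reduce to a finite sublanguage: since elementary equivalence of $\mc{L}^s$-structures is determined by finite relational sublanguages, and since for $R \in \mc{L}$ the interpretation $R^{M[N]^s}$ depends only on $R^M$, $R^N$ and equalities of first coordinates, taking $\mc{L}^s$-reducts commutes with the lexicographic product (with $s$); using $M \equiv M' \Rightarrow M\restriction\mc{L}_0 \equiv M'\restriction\mc{L}_0$ and likewise for $N$, it then suffices to show: for every $n \in \N$, if Duplicator wins the $n$-round game on $M,M'$ and on $N,N'$ (in a fixed finite relational language), then Duplicator wins the $n$-round game on $M[N]^s$, $M'[N']^s$.

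For the strategy: fix winning strategies $\tau_M$ for Duplicator on $M,M'$ and $\tau_N$ on $N,N'$. In the game on the products, each element of either structure is a pair $(a,b)$ with $a$ in the first factor and $b$ in the second; when Spoiler plays such a pair, Duplicator reads off the first coordinate $a$ as a move in the $M$-game and the second coordinate $b$ as a move in the $N$-game, applies $\tau_M$ to get $a'$ and $\tau_N$ to get $b'$, and answers $(a',b')$. After $n$ rounds this has consumed exactly $n$ rounds of each factor game, so the induced maps $\bar a \mapsto \bar a'$ and $\bar b \mapsto \bar b'$ are partial isomorphisms $M \to M'$ and $N \to N'$ respectively.

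The main step is then purely a verification: that $(a_i,b_i) \mapsto (a'_i,b'_i)$ is a partial isomorphism $M[N]^s \to M'[N']^s$. Equality of product elements reduces to "$a_i = a_j$ and $b_i = b_j$", and $s((a_i,b_i),(a_j,b_j))$ reduces to "$a_i = a_j$", each preserved because $\bar a \mapsto \bar a'$ and $\bar b \mapsto \bar b'$ are partial isomorphisms; and for a relation symbol $R$, membership of a tuple in $R^{M[N]^s}$ unpacks via \Cref{lexdefn} into a Boolean combination of the conditions "$a_{i_p} \neq a_{i_q}$", "$(a_{i_1},\dots,a_{i_m}) \in R^M$" and "$(b_{i_1},\dots,b_{i_m}) \in R^N$", all of which are preserved. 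Hence Duplicator wins the product game. I do not expect a genuine obstacle here; the only place requiring care is the bookkeeping tying the single combined game to the two factor games (and the reduction to finite sublanguages). As an alternative route one may instead invoke the Keisler--Shelah theorem: apply it to the two-sorted disjoint sum $(M,N)$ versus $(M',N')$ to obtain a single ultrafilter $\mathcal{U}$ with $M^{\mathcal{U}} \cong M'^{\mathcal{U}}$ and $N^{\mathcal{U}} \cong N'^{\mathcal{U}}$, and check (again by \L o\'s's theorem) that ultrapowers commute with the lexicographic product, so that $(M[N]^s)^{\mathcal{U}} \cong (M^{\mathcal{U}})[(N^{\mathcal{U}})]^s \cong (M'^{\mathcal{U}})[(N'^{\mathcal{U}})]^s \cong (M'[N']^s)^{\mathcal{U}}$, whence $M[N]^s \equiv M'[N']^s$.
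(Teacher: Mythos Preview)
The paper does not actually prove this lemma: it is quoted verbatim as \cite[Corollary 1.2.5]{Mei19} with no argument given, so there is no in-paper proof to compare against. Your proposal is correct as written, and in fact either route you sketch suffices.

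A couple of remarks for completeness. In the Ehrenfeucht--Fra\"{i}ss\'{e} approach, the point that might look suspicious, namely that you run a \emph{single} game on $N, N'$ rather than one per $s$-class, is fine: in \Cref{lexdefn} the only time the second coordinates matter is when all first coordinates in the tuple coincide, and in that case the truth of $R$ depends only on the $\bar b$-tuple as elements of $N$, regardless of which column they sit in. Thus the product of two partial isomorphisms on the factors really is a partial isomorphism on $M[N]^s$. Your reduction to finite sublanguages is also sound, since for each $R \in \mc{L}$ the interpretation $R^{M[N]^s}$ depends only on $R^M$, $R^N$ and equality of first coordinates, so taking reducts commutes with the product.

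For the Keisler--Shelah alternative, the commutation of ultrapowers with the lexicographic product is the one step that needs a line of justification; it follows because the condition ``all first coordinates equal'' is a finite Boolean combination of equalities, so the ultrafilter decides it, and then \L o\'s applies componentwise in each case. Both approaches have the advantage you note: they avoid any transitivity assumption, which is essential here since \Cref{boolean combination in product} does assume transitivity and hence cannot be invoked directly.
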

    
    \begin{lem}\label{elementary extension product}
        Let $M, N$ be $\mc{L}$-structures, and let $E \succeq M[N]^s$ be an elementary extension. Then there are elementary extensions $M' \succeq M$ and $N' \succeq N$ such that $M'[N']^s \succeq E$.
        
        \begin{proof}
            Take $\kappa:=|E|$ and let $M'\succeq M$ and $N'\succeq N$ such that $M',N'$ are $\kappa^+$-saturated. Then $M'[N']^s$ is $\kappa^+$-saturated. By \Cref{elementary equivalence product}, $M'[N']^s\equiv M[N]^s\equiv E$, so, by saturation, $M'[N']^s\succeq E$.
        \end{proof}
    \end{lem}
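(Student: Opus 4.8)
The plan is to prove \Cref{elementary extension product} by a routine saturation argument, the essential input being \Cref{saturation in product}, which lets saturation pass back and forth between the lexicographic product $M[N]^s$ and its factors $M,N$. First I would set $\kappa := |E|$ and choose elementary extensions $M' \succeq M$ and $N' \succeq N$ that are both $\kappa^+$-saturated; such extensions exist for any structure, since every (complete theory of a) structure has arbitrarily saturated elementary extensions. By \Cref{saturation in product} it then follows at once that $M'[N']^s$ is $\kappa^+$-saturated.

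Next I would pin down the elementary equivalence type. Since $M \preceq M'$ and $N \preceq N'$ we have in particular $M \equiv M'$ and $N \equiv N'$, so \Cref{elementary equivalence product} yields $M[N]^s \equiv M'[N']^s$; combining this with the hypothesis $E \succeq M[N]^s$ gives $E \equiv M'[N']^s$.

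Finally I would invoke the classical fact that a $\kappa^+$-saturated structure is $\kappa^+$-universal among the structures elementarily equivalent to it of cardinality $\le \kappa$. Since $|E| = \kappa$ and $E \equiv M'[N']^s$, this produces an elementary embedding $j : E \to M'[N']^s$, built by transfinite recursion along an enumeration of $E$ of length $\kappa$: at step $\alpha < \kappa$ the image constructed so far has size $\le \kappa$, so $\kappa^+$-saturation of $M'[N']^s$ realizes the type of the $\alpha$-th element of $E$ over that image. Identifying $E$ with $j(E)$ gives $M'[N']^s \succeq E$, as required. (If one also wanted $j$ to fix $M[N]^s$ pointwise, one would first need the finer statement $M[N]^s \preceq M'[N']^s$ and then run the recursion over $M[N]^s$ instead of over a fresh enumeration; this refinement is not needed for the statement as stated.)

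I do not expect a real obstacle here: the whole argument is the standard universality-of-saturated-models construction, and the only points needing a moment's care are checking that sufficiently saturated elementary extensions $M', N'$ exist (they do) and correctly quoting \Cref{saturation in product} for the preservation of $\kappa^+$-saturation under $M'[-]^s$ together with \Cref{elementary equivalence product} for the elementary equivalence. If anything is "hard", it is simply remembering to route everything through $M[N]^s$ so that the product structure, rather than an arbitrary model, is what we embed $E$ into.
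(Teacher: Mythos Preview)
Your proposal is correct and is essentially identical to the paper's own proof: set $\kappa = |E|$, take $\kappa^+$-saturated $M' \succeq M$, $N' \succeq N$, use \Cref{saturation in product} to get $\kappa^+$-saturation of $M'[N']^s$, use \Cref{elementary equivalence product} to get $M'[N']^s \equiv E$, and conclude by universality of saturated models. Your write-up is simply more explicit about the universality step, and your parenthetical remark about whether the embedding should fix $M[N]^s$ pointwise flags a subtlety that the paper's one-line ``by saturation'' leaves implicit.
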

    
    \begin{prop}\label{fpt in fin prod}
        Let $\Th(M)$ be transitive. If $M$ has FPT$_n$, then so does $M[N]^s$.
    \end{prop}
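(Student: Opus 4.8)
The plan is to analyse types over $M[N]^s$ through the two factors, reduce $G$-invariance (where $G=\Aut(M[N]^s)$) to $\Aut(M)$-invariance, and then feed in FPT$_n$ of $M$. First I would record the dictionary. Since $\Th(M)$ is transitive, \Cref{automorphism of wreath product} gives $G=\Aut(M)\Wr_M\Aut(N)$, so every $F\in G$ has the form $F(x,y)=(f(x),g_x(y))$ with $f\in\Aut(M)$ and $g_a\in\Aut(N)$; in particular $F$ sends a parameter $(a,b)\in M[N]^s$ to $(f(a),g_a(b))$. Given $p\in S_n(M[N]^s)$, realise it by a tuple $\ov{(c,d)}$ in an elementary extension, which by \Cref{elementary extension product} we may take to be $M'[N']^s$ with $M'\succeq M$, $N'\succeq N$, so $\ov c\in M'$ and $\ov d\in N'$; put $q:=\tp_{M'}(\ov c/M)\in S_n(M)$ and $r:=\tp_{N'}(\ov d/N)\in S_n(N)$. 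By \Cref{boolean combination in product} (with the variables split into the realised tuple and the parameters, as in the proof of \Cref{types in prod depend on types in factors}), each $\mc{L}^s$-formula $\phi(\bar x,\bar y)$ admits $\mc{L}$-formulas $\phi_1^j,\phi_2^j$ ($j<k$) read off the $M$- and $N$-coordinates such that, writing $\ov a^M,\ov a^N$ for the $M$- and $N$-parts of $\bar a\in M[N]^s$,
\[
\phi(\bar x,\bar a)\in p \iff \bigvee_{j<k}\bigl(\phi_1^j(\bar x,\ov a^M)\in q \ \wedge\ \phi_2^j(\bar x,\ov a^N)\in r\bigr),
\]
and the analogous equivalence holds for the type determined by any other pair of factor types. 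Thus a type over $M[N]^s$ is pinned down by its pair of factor types, and \Cref{types in prod depend on types in factors} makes this assignment well defined.

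Next I would apply FPT$_n$ of $M$ to $q$: there is an $\Aut(M)$-invariant $q^{*}\in\ov{\Aut(M)\cdot q}\subseteq S_n(M)$. Realising $q^{*}$ together with $r$ coherently inside one product-form elementary extension of $M[N]^s$ (again via \Cref{elementary extension product}), let $p^{*}\in S_n(M[N]^s)$ be the type with factor types $(q^{*},r)$; I claim $p^{*}$ is a $G$-fixed point in $\ov{G\cdot p}$, which is exactly FPT$_n$ for $M[N]^s$. For $p^{*}\in\ov{G\cdot p}$: given finitely many $\phi_i(\bar x,\bar a_i)\in p^{*}$, look at the finitely many $M$-side components $\phi_1^j(\bar x,\ov a_i^M)$ in their decompositions; since $q^{*}\in\ov{\Aut(M)\cdot q}$, applying the closure condition to the single conjunction of these components (with the signs dictated by $q^{*}$) yields $f\in\Aut(M)$ transporting them faithfully into $q$, and then $F:=(f,(\id)_{a\in M})\in G$ fixes every $N$-coordinate of the $\bar a_i$, so the $N$-side components ($\in r$) are untouched and $\phi_i(\bar x,F\bar a_i)\in p$ for each $i$. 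For $G$-invariance of $p^{*}$: given $F=(f,(g_a))\in G$, invariance of $q^{*}$ under $\Aut(M)$ matches the $\phi_1^j$-conditions after applying $f$; for the $\phi_2^j$-conditions one uses that parameters in distinct $M$-fibres interact through no $\mc{L}^s$-relation, and that — as $\Th(M)$ is transitive — an $\Aut(M)$-invariant type has no coordinate in $M$, so the realising tuple $\ov{c^{*}}$ (hence $\ov{d^{*}}$) lies in fibres disjoint from all parameters; within each such fibre the map $\ov a^N\mapsto g_a(\ov a^N)$ is an automorphism of $N$ and preserves $\mc{L}$-types, so the $\phi_2^j$-conditions are unchanged.

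The delicate point, and the one I expect to cost the most work, is the $G$-invariance verification, precisely because the base-group factors $(g_a)_{a\in M}$ act independently on the $N$-coordinates in different $M$-fibres; making this harmless is what forces the two uses of transitivity of $\Th(M)$ — once for the wreath-product description of $G$, and once to ensure $\Aut(M)$-invariant types are realised off $M$ so that no parameter shares a fibre with the realising tuple. The remaining ingredients — extending $p$, $q^{*}$, $r$ and $p^{*}$ to a common product-form model, and checking $p^{*}$ is a complete type — are routine given \Cref{elementary extension product} and \Cref{types in prod depend on types in factors}.
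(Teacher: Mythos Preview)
Your overall strategy coincides with the paper's: factor $p$ into $(q,r)$ via \Cref{types in prod depend on types in factors}, use FPT$_n$ of $M$ to find an $\Aut(M)$-invariant $q^{*}\in\ov{\Aut(M)\cdot q}$, set $p^{*}:=q^{*}[r]$, and verify that $p^{*}$ is $G$-invariant and lies in $\ov{G\cdot p}$. Your treatment of the orbit-closure step and of invariance under the ``top'' part $(f,(\id))$ matches the paper's.

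There is, however, a genuine gap in your invariance argument for the base-group part $\sigma_2=(\id,(g_a))$. You assert that the $\phi_2^j$-conditions are unchanged because within each $M$-fibre the relevant map is an automorphism of $N$. But \Cref{boolean combination in product} only produces $\phi_2^j$ as a single $\mc{L}$-formula evaluated in $N'$ on \emph{all} the $N$-coordinates simultaneously. When the parameter tuple $\ov{(a,b)}$ has entries in several distinct $M$-fibres, $\sigma_2$ sends $\bar b$ to $(g_{a_i}(b_i))_i$ with \emph{different} automorphisms $g_{a_i}\in\Aut(N)$ applied to different coordinates, and there is no reason that $N'\models \phi_2^j(\bar d,\bar b)\leftrightarrow\phi_2^j(\bar d,(g_{a_i}(b_i))_i)$. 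Your slogan ``parameters in distinct $M$-fibres interact through no $\mc{L}^s$-relation'' is correct for atomic formulas of $M[N]^s$, but it does not pass through the decomposition of \Cref{boolean combination in product} to yield a fibrewise form of $\phi_2^j$ without additional argument (which would amount to proving a sharper version of that proposition).

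The paper closes this gap by a device you omit: it chooses $N'$ to be strongly $(|N|+\aleph_0)^+$-homogeneous (harmless, as one may choose the elementary extension freely), extends each $g_a\in\Aut(N)$ to some $g'_a\in\Aut(N')$, and defines $\sigma''_2$ on $M''[N']^s$ by $\sigma''_2(x,y)=(x,g'_x(y))$ for $x\in M$ and $\sigma''_2(x,y)=(x,y)$ for $x\notin M$. By \Cref{automorphism of wreath product} this is an automorphism of the ambient product; it extends $\sigma_2$; and since every coordinate of $\bar c^{*}$ lies in $M''\setminus M$ it fixes the realising tuple $\ov{(c^{*},d)}$ pointwise. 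Invariance of $p^{*}$ under $\sigma_2$ is then immediate. Incorporating this single step makes your outline complete.
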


    \begin{proof}
        Let $G = \Aut(M[N]^s)$.
    
        Let $p \in S_n(M[N]^s)$. We realise $p$ as a tuple $\ov{(a, b)}$ in an elementary extension of $M[N]^s$, which, by \Cref{elementary extension product}, we may assume is $M'[N']^s$ for some $M' \succeq M$, $N' \succeq N$. We will also assume that $N'$ is strongly $(|N| + \aleph_0)^+$-homogeneous.

        Let $p_1 = \tp_{M'}(\ov{a}/M)$ and $p_2 = \tp_{N'}(\ov{b}/N)$. As $M$ has $FPT_n$, there is an $\Aut(M)$-invariant type $q_1 \in \ov{\Aut(M) \cdot p_1}$. Let $\bar{c} \in M'' \succeq M$ be a realisation of $q_1$, where $M''$ is some elementary extension of $M$. Since $\Th(M)$ is transitive and $q_1$ is $\Aut(M)$-invariant, we have that $\bar{c}\in M''\setminus M$. Let $q = \tp_{M''[N']^s}(\ov{(c, b)}/M[N]^s)$. By \Cref{types in prod depend on types in factors}, $q$ depends only on $q_1$ and $p_2$, so we denote $q$ obtained as above from $q_1$ and $p_2$ by $q=:q_1[p_2]$. We claim that $q$ is $G$-invariant and lies in the $G$-orbit closure of $p$.

        We first show that $q$ is $G$-invariant. Take $\sigma \in G$. By \Cref{automorphism of wreath product}, there exist $f \in \Aut(M)$ and a family of automorphisms $(g_m \in \Aut(N) : m \in M)$ such that $\sigma(x, y) = (f(x), g_x(y))$. Let $\sigma_1(x, y) = (f(x), y)$ and let $\sigma_2(x, y) = (x, g_x(y))$, so $\sigma = \sigma_1 \circ \sigma_2$, and therefore it suffices to show that $q$ is $\sigma_i$-invariant for $i \in \{1, 2\}$.

        Using the fact that $N'$ is strongly $(|N| + \aleph_0)^+$-homogeneous, extend each $g_m$ (for $m \in M$) to an automorphism $g'_m$ of $N'$. By \Cref{automorphism of wreath product}, the function $\sigma''_2 : M''[N']^s \to M''[N']^s$ given by
        \[ \sigma''_2(x, y) =
        \begin{cases}
            (x, g'_x(y)) & x \in M \\
            (x, y) & x \not\in M
        \end{cases}
        \]
	   is an automorphism of $M''[N']^s$. We have that $\sigma''_2$ extends $\sigma_2$, and using the fact that $\bar{c}\in M''\setminus M$, we have $\sigma''_2(\ov{(c, b)}) = \ov{(c, b)}$. We therefore have that $q$ is $\sigma_2$-invariant.

        To see that $q$ is $\sigma_1$-invariant, we recall that $q_1$ is $\Aut(M)$-invariant, so in particular is invariant under $f$. Therefore, $\sigma_1(q) = \sigma_1(q_1[p_2]) =  f(q_1)[p_2] = q_1[p_2]= q$. This completes the proof of the claim that $q$ is $G$-invariant.

        To show that $q\in \overline{G\cdot p}$, let $\phi$ be an $\mc{L}^s$-formula such that $\phi \in q$. We must find some $g\in G$ such that $\phi \in g\cdot p$. 
        
        Since $q=q_1[p_2]$, by definition and by \Cref{types in prod depend on types in factors}, there are $\mc{L}$-formulae $\phi_1,\phi_2$ with $\phi_1\in q_1$ and $\phi_2\in p_2$ such that whenever $q' = q'_1[q'_2]$ with $\phi_1\in q'_1$ and $\phi_2\in q'_2$, we have that $\phi\in q$.
        Now, $q_1\in \overline{\Aut(M)\cdot p_1}$, and therefore there is some $g_1\in \Aut(M)$ such that $\phi_1\in g_1\cdot p_1$. Finally, let $g\in \Aut(M[N]^s)$ be defined by
        $g(x,y):=(g_1(x),y)$. 
        By \Cref{automorphism of wreath product}, 
        we have $g\in \Aut(M[N]^s)$,
        and 
        $g \cdot p = g\cdot p_1[p_2] = (g_1\cdot p_1)[p_2]\ni \phi$.
    \end{proof}

\subsection{The infinite lexicographic product} \label{infinite products}

    In this \namecref{infinite products}, we review the infinite lexicographic product from \cite{Me2} and demonstrate the use of our main theorem (\Cref{FPTiffEDRP}) in this particular case.
	
    The following definition is a special case of {\cite[Example 4.6]{Me2}}.
    \begin{defn}\label{OmegaLOmega product}
        Let $\left(M_i\right)_{i\in \omega}$ be a family of countably infinite $\mc{L}$-structures, each with domain $\omega$. The infinite lexicographic product $[M_i]_{i < \omega}$ is the $\mc{L}$-structure with domain $\omegaomega$ where, for every $a_1,\dots, a_k\in \omegaomega$ and every $k$-ary relation $R\in \mc{L}$, letting $n\in\omega$ be maximal such that $a_1 \upharpoonright n = \dots = a_k \upharpoonright n$, we have that 
        \[[M_i]_{i < \omega} \models R(a_1,\dots, a_k) \iff M_n \models R(a_1(n),\dots, R(a_k(n)).\]
    \end{defn}

    While most of the results in the following can be stated more generally (see \cite[Definition 4.5]{Me2}), for our purposes, we restrict our attention to products as in \Cref{OmegaLOmega product}.

    Throughout this \namecref{infinite products}, we fix a family $(M_i)_{i\in \omega}$ of \Fr structures, where each $M_i$ is transitive and has domain $\omega$. (This is called $\omega$-pure in \cite[Definition 5.2]{Me2}.)

    We fix a family $\{s_n \mid n<\omega\}$ of binary relation symbols, and let $M_i^s$ be the expansion of $M_i$ by $\{s_n \mid n<\omega\}$ such that
    $\left(s_n\right)^{M_i^s}=\twopartdef{\left(M_i\right)^2}{i> n}{\varnothing}{i\leq n.}$.

    \begin{rem}[{\cite[Remark 5.4]{Me2}}]\label{remark: expansion of inf product}
        With the expansion above, $[M_i^s]^{}_{i < \omega}$ has refining equivalence relations $s_0\supseteq s_1 \supseteq s_2 \supseteq \cdots$. We also have that $[M_i^s]^{}_{i < \omega}$ is the expansion of $[M_i]_{i < \omega}$ by  $\{s_n | n<\omega\}$  such that
	$s_n(x,y)\Leftrightarrow x(i) = y(i)$ for all $i \leq n$. 
    \end{rem}

    The following \namecref{inf product is fin} is a special case of {\cite[Lemma 4.15]{Me2}}, and its proof is straightforward.
    
    \begin{lem}\label{inf product is fin}
        For $j \geq 1$, let $N_j = M_j$. Let $N^s = [N_j^s]^{}_{j \geq 1}$ (here the enumeration of the $s_n$ begins at $n = 1$). Then we have $M_0[N^s]^{s_0} \cong [M_i^s]^{}_{i < \omega}$ via the map $(a,b) \mapsto a\smallfrown~b$ (the concatenation of $a$ and $b$).
    \end{lem}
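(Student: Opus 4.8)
The plan is to verify directly that the indicated map is an isomorphism of $\mc{L}\cup\{s_n:n<\omega\}$-structures. Write $\Phi\colon M_0[N^s]^{s_0}\to[M_i^s]^{}_{i<\omega}$ for the map $\Phi(a,b)=a\smallfrown b$, where we identify the index set $\{1,2,\dots\}$ of the product $N^s=[N_j^s]^{}_{j\geq 1}$ with $\omega$ in the obvious order-preserving way, so that \Cref{OmegaLOmega product} and \cite[Remark 5.4]{Me2} apply to $N^s$ as well. The underlying sets match: every $c\in\omegaomega$ has a unique decomposition $c=a\smallfrown b$ with $a=c(0)\in M_0$ and $b=c\upharpoonright\{1,2,\dots\}\in N^s$, so $\Phi$ is a bijection with inverse $c\mapsto(c(0),\,c\upharpoonright\{1,2,\dots\})$. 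It remains to show that $\Phi$ preserves and reflects each relation symbol.

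For the symbols $s_n$ this follows from \cite[Remark 5.4]{Me2}. By construction of the $(-)^{s_0}$-expansion, $M_0[N^s]^{s_0}\models s_0((a_1,b_1),(a_2,b_2))$ iff $a_1=a_2$, i.e.\ iff $(a_1\smallfrown b_1)(0)=(a_2\smallfrown b_2)(0)$, which by \cite[Remark 5.4]{Me2} is exactly the condition for $[M_i^s]^{}_{i<\omega}\models s_0(a_1\smallfrown b_1,a_2\smallfrown b_2)$. For $n\geq 1$ the symbol $s_n$ is interpreted as $\varnothing$ on the first factor $M_0$, so \Cref{lexdefn} gives $M_0[N^s]^{s_0}\models s_n((a_1,b_1),(a_2,b_2))$ iff $a_1=a_2$ and $N^s\models s_n(b_1,b_2)$; applying \cite[Remark 5.4]{Me2} to $N^s$, this says $a_1=a_2$ and $b_1(j)=b_2(j)$ for all $1\leq j\leq n$, which is precisely $(a_1\smallfrown b_1)(i)=(a_2\smallfrown b_2)(i)$ for all $0\leq i\leq n$, i.e.\ the condition for $[M_i^s]^{}_{i<\omega}\models s_n(a_1\smallfrown b_1,a_2\smallfrown b_2)$.

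Finally, let $R\in\mc{L}$ be $k$-ary, take $(a_\ell,b_\ell)\in M_0[N^s]$ for $\ell<k$, and put $c_\ell=a_\ell\smallfrown b_\ell$. If the $a_\ell$ are not all equal, then by \Cref{lexdefn} $M_0[N^s]\models R(\ov{(a,b)})$ iff $M_0\models R(\ov{a})$, while on the other side the $c_\ell$ already differ at coordinate $0$, so the index furnished by \Cref{OmegaLOmega product} is $0$ and $[M_i^s]^{}_{i<\omega}\models R(\ov{c})$ iff $M_0^s\models R(\ov{a})$ iff $M_0\models R(\ov{a})$, as $R\in\mc{L}$. If instead all $a_\ell$ equal a common value $a$, then $c_\ell=a\smallfrown b_\ell$ and the least coordinate at which the $c_\ell$ differ equals the least coordinate $m\geq 1$ at which the $b_\ell$ differ; thus \Cref{OmegaLOmega product} gives $[M_i^s]^{}_{i<\omega}\models R(\ov{c})$ iff $M_m^s\models R(b_1(m),\dots,b_k(m))$, while \Cref{lexdefn} followed by \Cref{OmegaLOmega product} applied to $N^s$ gives $M_0[N^s]\models R(\ov{(a,b)})$ iff $N^s\models R(\ov{b})$ iff $N_m^s\models R(b_1(m),\dots,b_k(m))$; these agree since $N_m^s$ and $M_m^s$ have the same $\mc{L}$-reduct, namely $M_m=N_m$. (When even the $c_\ell$ all coincide, both sides reduce to the same condition via the convention governing \cite[Definition 4.5]{Me2} applied coordinatewise.) There is no genuine obstacle here; the only delicate point is the index bookkeeping — keeping track of the shift between $N^s$, which we index from $1$, and the ``tail'' subproduct $[M_i^s]^{}_{i\geq 1}$ of $[M_i^s]^{}_{i<\omega}$, i.e.\ which factor structure is consulted at each coordinate, together with the observation that $N_m^s$ and $M_m^s$ restrict to the same $\mc{L}$-structure.
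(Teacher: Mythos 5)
Your direct verification is correct and is exactly the ``straightforward'' check the paper has in mind --- the paper omits the proof entirely, deferring to \cite[Lemma 4.15]{Me2}: the map is a bijection, and your case analysis on whether the first coordinates agree matches the interpretation of each $R\in\mc{L}$ and each $s_n$ on both sides, the only delicate points being the index shift for $N^s$ and the fact that $s_n^{M_0}=\varnothing$ for $n\geq 1$ under the different-languages convention of \Cref{lexdefn}. The degenerate case where all entries of the tuple coincide is governed by the same convention on both sides, as you note, so nothing is missing.
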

    
    Now that we have established the basic setting of the infinite products relevant to our discussion, we recall a few facts from \cite{Me2} regarding infinite lexicographic products.

    \begin{defn}
        A substructure $D\subseteq [M^s_i]^{}_{i < \omega}$ is \emph{dense} if for all $t\in \omegaLomega$, there is some $d\in D$ such that $d\supseteq t$.
    \end{defn}

    \begin{fact}[{\cite[Theorem 5.14]{Me2}}]\label{Dense substructure unique and ultrahomogeneous}\ 
        \begin{enumerate}
		\item Up to isomorphism, there is a unique countable dense substructure $D\subseteq [M_i^s]^{}_{i < \omega}$.
		\item\label{denseUniqueHom} Such a $D$ is transitive and ultrahomogeneous.
        \end{enumerate}
    \end{fact}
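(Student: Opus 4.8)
The plan is to reduce both parts to a single \emph{one-point extension property} for countable dense substructures and then run standard back-and-forth arguments. Write $P := [M_i^s]_{i<\omega}$, and for distinct $x,y\in P=\omegaomega$ let $\Delta(x,y)$ be the least coordinate on which $x,y$ differ. The key observations about $P$ are: $s_m(x,y)$ holds iff $\Delta(x,y)>m$; and the truth of any $\mc{L}$-relation on a tuple $\bar t$ is determined by the maximal common initial segment of $\bar t$ together with the $M_\ell$-structure on the tuple of $\ell$-th coordinates of $\bar t$, where $\ell=\min_{i\neq j}\Delta(t_i,t_j)$ (unary symbols, if present, need a convention, handled below). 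Let $\mathcal{A}$ be the class of finite substructures of $P$. The claim to establish is: if $D\subseteq P$ is countable and dense, then for every embedding $f\colon A\to D$ with $A\in\mathcal{A}$ and every one-point extension $A\subseteq B=A\cup\{b\}$ with $B\in\mathcal{A}$, the map $f$ extends to an embedding $B\to D$.

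Granting the claim, I would derive part~(1) by a back-and-forth between two countable dense $D,D'$: maintain a finite partial isomorphism $p$; to put a given $d\in D$ into its domain, view $p$ as an embedding $\mathrm{dom}(p)\to D'$ and apply the claim for $D'$ to $\mathrm{dom}(p)\cup\{d\}\in\mathcal{A}$; to put $d'\in D'$ into its range, do the symmetric thing with $p^{-1}$ and $D$; enumerating $D$ and $D'$ and alternating yields an isomorphism (existence of a countable dense $D$ being immediate, as there are only countably many nonempty cones). Part~(2) is the same back-and-forth with $D'=D$ started from an arbitrary finite partial isomorphism between substructures of $D$, giving ultrahomogeneity; transitivity of $D$ is then the special case of singletons, using that all one-element substructures of $P$ are isomorphic since each $M_i$ is transitive.

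To prove the claim, the case $A=\varnothing$ is trivial (take any $d\in D$, which exists by density). Otherwise put $n:=\max_{a\in A}\Delta(b,a)$, finite since $b\notin A$, and $A_0:=\{a\in A : a\upharpoonright n=b\upharpoonright n\}$; then $\Delta(a,b)=n$ (so $a(n)\neq b(n)$) for $a\in A_0$, and $\Delta(a,b)<n$ for $a\in A\setminus A_0$. Since an embedding preserves and reflects every $s_m$, it preserves $\Delta$; hence $f(a)\upharpoonright n$ is a common value $\tau\in\omega^n$ for all $a\in A_0$, and $a(n)\mapsto f(a)(n)$ is a well-defined isomorphism $g$ between the $M_n$-substructures induced on $S:=\{a(n):a\in A_0\}$ and $S':=\{f(a)(n):a\in A_0\}$. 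As $b(n)\in M_n\setminus S$ and $M_n$ is \Fr (hence ultrahomogeneous), extend $g$ to $\hat g\in\Aut(M_n)$ and set $c:=\hat g(b(n))\in M_n\setminus S'$. By density of $D$, pick $d\in D$ with $d\upharpoonright n=\tau$ and $d(n)=c$, and let $\phi:=f\cup\{b\mapsto d\}$.

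The remaining, and main, task is to verify that $\phi$ is an embedding $B\to D$; this is routine but somewhat tedious, and is the step I expect to demand the most care. Relations on tuples inside $A$ are preserved because $f$ is an embedding. For tuples involving $b$, one first checks $\Delta(f(a),d)=\Delta(a,b)$ for each $a\in A$: for $a\in A_0$ this is immediate from $d\upharpoonright n=\tau$ and $c\notin S'$, and for $a\notin A_0$ it follows by comparing $a$ with a fixed $a_1\in A_0$ and using $\Delta$-preservation of $f$ at the level $\Delta(a,b)<n$. One then checks that at the evaluation level of any tuple from $B$ the coordinates of its $\phi$-image realize the same $M_\ell$-structure as the original, splitting into the case $\ell=n$ (where the choice of $\hat g$ and $c$ does the work) and $\ell<n$ (inherited from $f$ via comparison with elements of $A_0$). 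Combining these with the description of the product relations in terms of $\Delta$ and the $M_\ell$'s shows that $\phi$ preserves and reflects all relations, proving the claim. The only other points needing attention are the degenerate cases — $A=\varnothing$, and, if $\mc{L}$ has unary symbols, the convention interpreting them in $P$, where transitivity of the $M_i$ again suffices.
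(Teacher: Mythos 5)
Your proposal is correct. Note that the paper does not prove this statement at all: it is imported as a Fact from \cite[Theorem 5.14]{Me2}, so there is no internal proof to compare against. That said, your route --- isolating a one-point extension property for countable dense substructures and deriving both uniqueness and ultrahomogeneity by back-and-forth --- is exactly the natural argument here, and the details check out: the $s_m$-relations force any embedding between substructures of $[M_i^s]_{i<\omega}$ to preserve $\Delta$ (this is where the $s$-expansion is essential; the claim would fail for the bare product $[M_i]_{i<\omega}$), which makes $\tau$ and the partial map $g$ well defined, and ultrahomogeneity of $M_n$ plus density of $D$ then produce the witness $d$. The one point I would ask you to treat more carefully is the degenerate case you attribute to unary symbols: as stated in \Cref{OmegaLOmega product}, the product relation is undefined on \emph{any} constant tuple (there is no maximal $n$ with $a_1\upharpoonright n=\dots=a_k\upharpoonright n$ when all $a_i$ coincide), and this affects not only transitivity of $D$ but also your verification that $g$ is a partial isomorphism of $M_n$, since relations of $M_n$ on constant tuples from $S$ are not reflected by any relation of the product on tuples from $A_0$. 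Under any reasonable convention this is repaired exactly as you say, by transitivity of the $M_i$ (all singleton substructures of $M_n$ are isomorphic), but the issue is broader than the unary case and deserves an explicit sentence.
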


    \begin{fact}[{\cite[Theorem 6.1]{Me2}}]\label{infProdIndOmega}
        If  $D\subseteq [M_i^s]^{}_{i < \omega}$ is a countable dense substructure, then $D$ is indivisible.
    \end{fact}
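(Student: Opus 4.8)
The plan is to exploit the self-similar, ``infinite wreath'' structure of $D$ and reduce indivisibility to a K\H{o}nig's-lemma argument over the tree $\omega^{<\omega}$. First I would record the recursive decomposition: by \Cref{inf product is fin} together with the fibre decomposition of $D$ over its first coordinate, $D \cong M_0[D_1]^{s_0}$, where $D_1$ is the unique countable dense substructure of $[M_i^s]_{i \geq 1}$ (uniqueness by \Cref{Dense substructure unique and ultrahomogeneous}); iterating, if $D_n$ denotes the unique countable dense substructure of $[M_i^s]_{i \geq n}$, then $D_n \cong M_n[D_{n+1}]^{s_n}$ and $D = D_0$. More generally, for any finite sequence $t$ realised in $D$, the ``cone'' $C_t = \{d \in D : d \supseteq t\}$ is, as an $\mc{L}^s$-structure, isomorphic to $D_{|t|}$ (density plus \Cref{Dense substructure unique and ultrahomogeneous}), $C_t$ is the disjoint union of the cones $C_{t\smallfrown a}$ for $a < \omega$, the quotient $C_t/s_{|t|}$ is isomorphic to $M_{|t|}$, and by transitivity of the $M_i$ together with \Cref{automorphism of wreath product} the group $\Aut(D)$ acts on these cones in a highly homogeneous fashion.

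Given a $2$-colouring $\chi : D \to 2$, I would call a realised node $t$ of length $n$ \emph{$r$-big} (for $r \in \{0,1\}$) if $\chi^{-1}(r) \cap C_t$ contains a copy of $D_n$, and aim to show the empty node is $r$-big for some $r$. The engine is a local step: if $r$ is such that a sufficiently large set of children $t \smallfrown a$ is $r$-big — large enough that its image in the quotient $C_t/s_{|t|} \cong M_{|t|}$ carries a copy of $M_{|t|}$ — then $t$ itself is $r$-big, since the monochromatic copies of $D_{|t|+1}$ in those cones assemble, along that copy of $M_{|t|}$, into a monochromatic copy of $M_{|t|}[D_{|t|+1}]^{s_{|t|}} \cong D_{|t|}$. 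This is exactly the finite-product fusion behind \Cref{lex prod indivisible with s}, now run one level at a time, with transitivity of $M_{|t|}$ taking over the role that indivisibility of the factor plays in the finite case. Since this cannot be bootstrapped from a base case, I would instead build a finitely-branching tree whose nodes record finite, mutually consistent fragments of such a monochromatic copy (a finite subtree of $\omega^{<\omega}$ with a colour attached to each leaf), show it is infinite by iterating the local step — here both the transitivity of the $M_i$ and the density of $D$, which supplies unlimited room at every level, are used — extract an infinite branch, and finally invoke the ultrahomogeneity of $D$ to realise the limit of this branch as a genuine substructure of $D$ isomorphic to $D$ and contained in a single colour class.

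The main obstacle is precisely the absence of a base case: unlike \Cref{lex prod indivisible with s}, where one starts from indivisibility of the innermost factor and works outward, here there is no innermost factor, so the level-by-level pigeonhole has to be carried out simultaneously along an infinite descending chain of levels and then glued. Pinning down the right notion of ``a large enough set of children'' so that a copy of $M_{|t|}$ is guaranteed in the quotient (transitivity alone does not hand one an abstract copy of $M_{|t|}$ inside an arbitrary subset, so this needs the extra room coming from density and from the freedom to look deeper in the tree), phrasing the consistency of the finite fragments so that K\H{o}nig's lemma applies, and checking that the assembled branch really amalgamates into a copy of $D$ rather than merely a structure realising every finite substructure of $D$ (the Henson-graph cautionary phenomenon), are where the real work lies. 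The infinitely-refining equivalence relations $s_0 \supseteq s_1 \supseteq \cdots$ are what make the gluing possible, since at each stage they allow any ``bad'' part of the colouring to be pushed strictly further down the tree.
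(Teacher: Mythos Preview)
First: the paper does not prove this statement --- it is recorded as a \emph{fact} cited from \cite[Theorem 6.1]{Me2}, so there is no in-paper proof to compare your proposal against.

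Your proposal nonetheless has a genuine gap at exactly the point you flag. You correctly observe that ``transitivity alone does not hand one an abstract copy of $M_{|t|}$ inside an arbitrary subset,'' but your suggested fix --- ``the extra room coming from density and from the freedom to look deeper in the tree'' --- cannot work, because the obstruction sits at the top level and nothing below repairs it. Concretely: take $\mc{L} = \{\sim\}$ and let every $M_i$ be the equivalence relation with exactly two infinite classes (a transitive \Fr structure, cf.\ \Cref{singleeqrel}). Colour $d \in D$ by the $M_0$-class of $d(0)$. Any $e, e' \in \chi^{-1}(0)$ with $\neg s_0(e, e')$ have $e(0) \sim_{M_0} e'(0)$ and hence $e \sim e'$; but $D$ contains a pair satisfying $\neg s_0 \wedge \neg\sim$, so neither colour class embeds a copy of $D$. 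Thus under the hypotheses as stated in the paper (mere transitivity of the $M_i$) the statement is actually false, and no K\H{o}nig's-lemma packaging rescues it. Presumably \cite{Me2} assumes each $M_i$ is indivisible (or ``$\omega$-pure'' there carries more than the paper's parenthetical gloss suggests); with that extra hypothesis your local step goes through --- it becomes one instance of \Cref{lex prod indivisible with s} per level --- and your outline is then a reasonable sketch. The paper's only uses of this fact take the $M_i$ to be pure sets, which are certainly indivisible, so nothing downstream is affected.
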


    We will use \Cref{Dense substructure unique and ultrahomogeneous} to prove the following \namecref{inf prod edrp iff M edrp}.

    \begin{thm}\label{inf prod edrp iff M edrp}
        Let $D\subseteq [M_i^s]^{}_{i < \omega}$ be a countable dense substructure. Then the following are equivalent:
        \begin{enumerate}
            \item\label{item:D edrp} $D$ has the externally definable Ramsey property.
            \item\label{item:D fpt} $D$ has FPT.
            \item\label{item:M0 edrp} $M_0$ has the externally definable Ramsey property.
            \item\label{item:M0 fpt} $M_0$ has FPT.
        \end{enumerate} 
    \end{thm}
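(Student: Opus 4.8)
The plan is to prove the cycle $(1)\Rightarrow(3)\Rightarrow(4)\Rightarrow(2)\Rightarrow(1)$. Two of these implications are free: since $D$ is countable and, by \Cref{Dense substructure unique and ultrahomogeneous}, ultrahomogeneous, and since $M_0$ is a \Fr structure, \Cref{FPTiffEDRP}(2) gives $(1)\Leftrightarrow(2)$ and $(3)\Leftrightarrow(4)$. The structural input for the remaining two implications is the identification $D\cong M_0[D']^{s_0}$, where $D'$ is the (unique, by \Cref{Dense substructure unique and ultrahomogeneous}) countable dense substructure of the shifted product $[M_{i+1}^s]_{i<\omega}$: by \Cref{inf product is fin} we have $[M_i^s]_{i<\omega}\cong M_0[N^s]^{s_0}$ with $N^s=[M_{i+1}^s]_{i<\omega}$, and $M_0[D']^{s_0}$ is then a countable dense substructure of this product (density reducing to density of $D'$ in $N^s$), hence isomorphic to $D$. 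Note also that $\Th(M_0)$ is transitive, each $M_i$ being transitive.

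For $(4)\Rightarrow(2)$: if $M_0$ has FPT then it has FPT$_n$ for every $n$, so by \Cref{fpt in fin prod} (using transitivity of $\Th(M_0)$) the product $M_0[D']^{s_0}\cong D$ has FPT$_n$ for every $n$, i.e.\ $D$ has FPT.

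For $(1)\Rightarrow(3)$ I would argue directly on the Ramsey property, using that copies of $M_0$ sit inside $D$ along ``$s_0$-free'' tuples. First, $\Th(M_0)$ has quantifier elimination (this holds for every countable ultrahomogeneous relational structure, since the extension property of $M_0$ is expressed by $\forall\exists$-axioms; cf.\ \Cref{omegahomogiffQE} for the $\aleph_0$-categorical case), so by \Cref{EDRPkiff2} and \Cref{boolcomb} it suffices to treat a $2$-colouring $\chi\colon\binom{M_0}{\bar a}\to 2$ externally defined by an \emph{atomic} formula $R(\bar x,\bar y)$ and a parameter tuple $\bar e$ in an elementary extension of $M_0$. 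Let $\bar a^{\ast},B^{\ast}\in\Age(D)$ be the copies of $\bar a,B$ realised by tuples whose first coordinates are pairwise distinct (so all the $s_n$ are trivial on them); these lie in $\Age(D)$ because $D=M_0[D']^{s_0}$ and $D'$ is infinite, and on such tuples the $\mc{L}$-structure of $D$ agrees with the $M_0$-structure of the first coordinates (by \Cref{lexdefn}). Realising $\tp(\bar e/M_0)$ in a sufficiently saturated, strongly homogeneous elementary extension of the form $M'[N']^{s_0}\succeq D$ (available via \Cref{elementary extension product} and \Cref{saturation in product}), lift $\chi$ to $\hat\chi\colon\binom{D}{\bar a^{\ast}}\to 2$ by pairing $\bar e$ with distinct elements of $D'$ and letting $\hat\chi(\overline{(a',d')})$ be the $\chi$-colour of the first-coordinate tuple; by \Cref{lexdefn} the relation $R$ on such tuples is computed exactly as in $M'$, so $\hat\chi$ is externally definable over $D$ by $R$ itself. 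Applying the externally definable Ramsey property of $D$ (that is, $(1)$) to $\bar a^{\ast}$ and $B^{\ast}$ yields a $\hat\chi$-monochromatic copy of $B^{\ast}$ in $D$, whose set of first coordinates is a $\chi$-monochromatic copy of $B$ in $M_0$. This establishes $(1)\Rightarrow(3)$ and closes the cycle.

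The main obstacle is the $(1)\Rightarrow(3)$ step, specifically the bookkeeping needed to (a) reduce a general externally definable colouring to an atomic one — this is exactly where quantifier elimination of $M_0$ is indispensable, since a quantified $\phi(\bar x,\bar y)$ need not be evaluated in $M_0[D']^{s_0}$ the same way as in $M_0$, the witnesses in the product ranging over $M_0\times D'$ rather than $M_0$ — and (b) verify that the lifted colouring is externally definable over $D$, which rests on the product-specific fact that $M'\succeq M_0$ and $N'\succeq D'$ imply $M'[N']^{s_0}\succeq M_0[D']^{s_0}$, provable by Tarski--Vaught together with \Cref{boolean combination in product}. A natural alternative to $(1)\Rightarrow(3)$ is to prove $(2)\Rightarrow(4)$ directly: from the wreath-product description $\Aut(D)=\Aut(M_0)\Wr_{M_0}\Aut(D')$ of \Cref{automorphism of wreath product} one gets a continuous surjection $\Aut(D)\twoheadrightarrow\Aut(M_0)$, and one would build a continuous, surjective, equivariant map from the clopen subflow of $S_n(D)$ of types with pairwise-distinct, pairwise-$\neg s_0$ variables onto the analogous subflow of $S_n(M_0)$, transport $\Aut(D)$-fixed points across it, and reduce the general subflow of $S_n(M_0)$ to this one by induction on $n$ (a subflow meeting ``$x_i=x_j$'' drops to $S_{n-1}(M_0)$); but the well-definedness of that map again hinges on the same product-semantics point and on quantifier elimination of $M_0$ (here one would use \Cref{types in prod depend on types in factors} in place of a direct computation).
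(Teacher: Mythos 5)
Your architecture is sound, and for three of the four implications it coincides with the paper's: $(1)\Leftrightarrow(2)$ and $(3)\Leftrightarrow(4)$ follow from \Cref{FPTiffEDRP}(2) exactly as you say, and your $(4)\Rightarrow(2)$ via $D\cong M_0[D']^{s_0}$ and \Cref{fpt in fin prod} is precisely the paper's \Cref{inf prod fpt iff M fpt} (the paper then closes the cycle on the FPT side, by asserting the two-way transfer between FPT for $D$ and for $M_0$). You are right that a fourth implication is genuinely needed, and it is to your credit that you attempt to supply one explicitly.

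The problem is that your $(1)\Rightarrow(3)$ rests on a claim that is false in general: that $\Th(M_0)$ has quantifier elimination merely because $M_0$ is a countable ultrahomogeneous relational structure. Ultrahomogeneity only guarantees that types over $\varnothing$ \emph{realised in $M_0$} are determined by their quantifier-free parts; QE of the full theory follows when $\mc{L}$ is finite (via $\aleph_0$-categoricity, \Cref{finLQE}), but the standing hypotheses of this section allow $\mc{L}$ to be countably infinite, and then QE can fail even for transitive \Fr structures. For example, let $\mc{L}=\{R_n : n<\omega\}\cup\{T\}$ with the $R_n$ binary and $T$ ternary, all irreflexive, and let $\mc{K}$ be the free amalgamation class of finite structures in which the $R_n$ are pairwise disjoint graphs and $T(x,y,z)$ implies $R_n(x,y)$ for some $n$; in the \Fr limit, $\exists z\, T(x,y,z)$ is not equivalent modulo the theory to any quantifier-free formula, since by compactness there are models containing pairs with the same quantifier-free type, satisfying no $R_n$, and disagreeing on $\exists z\, T(x,y,z)$. (The paper itself proves QE for its infinite-language examples by ad hoc reduct arguments, never by a general principle.) Since your reduction to atomic colourings via \Cref{EDRPkiff2} and \Cref{boolcomb} — which you yourself flag as indispensable — requires this QE, the step fails for general $M_0$. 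A QE-free repair is available: $M_0$ is uniformly interpretable in $D$ as the quotient $D/s_0$, so a colouring of $\binom{M_0}{\bar a}$ externally defined by a formula $\phi$ lifts to a colouring of $\binom{D}{\bar a^*}$ externally defined by the \emph{translate} of $\phi$ under this interpretation (computed in $M'[N']^{s_0}\succeq D$), rather than by $\phi$ itself; this sidesteps both the atomic reduction and the quantifier-range issue you identify. You would still need to treat the degenerate case where the combined parameter-and-argument tuple can have all first coordinates equal (e.g.\ $|\bar a|=1$ with an empty or $M_0$-valued parameter tuple), where the product semantics reads off second coordinates.
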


    In the below \Cref{dense substructure is fin product of dense} and \Cref{inf prod fpt iff M fpt}, we use the notation of \Cref{inf product is fin}.
    
    \begin{lem}\label{dense substructure is fin product of dense}
        Let $D\sub [M_i^s]_{i < \omega}$ and $D' \sub N^s$ be countable dense substructures. Then $D\cong M_0[D']^{s_0}$.

        \begin{proof}
            As $M_0[D']^{s_0}$ is countable and dense in $M_0[N^s]^{s_0}$, by \Cref{inf product is fin} and \Cref{Dense substructure unique and ultrahomogeneous} we have $D\cong M_0[D']^{s_0}$.
        \end{proof}
    \end{lem}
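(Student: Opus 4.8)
The plan is to leverage the uniqueness of countable dense substructures provided by \Cref{Dense substructure unique and ultrahomogeneous}(1): since that result says there is, up to isomorphism, exactly one countable dense substructure of $[M_i^s]_{i < \omega}$, it suffices to exhibit $M_0[D']^{s_0}$ (after transporting along a known isomorphism) as such a substructure. Then $M_0[D']^{s_0}$ and $D$, being two countable dense substructures of $[M_i^s]_{i < \omega}$, must be isomorphic.

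Concretely, I would first invoke \Cref{inf product is fin} to obtain the isomorphism $\theta \colon M_0[N^s]^{s_0} \isoto [M_i^s]_{i < \omega}$ given by $(a, b) \mapsto a \smallfrown b$. Under $\theta$ the substructure $M_0[D']^{s_0}$ is carried to $\theta(M_0[D']^{s_0}) = \{a \smallfrown d' : a \in M_0,\, d' \in D'\}$, a substructure of $[M_i^s]_{i < \omega}$. Countability is immediate, since $M_0$ has domain $\omega$ and $D'$ is countable by hypothesis, so $M_0 \times D'$ is countable.

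The one genuine verification is density: I must check that $\theta(M_0[D']^{s_0})$ meets the density condition, i.e.\ that every $t \in \omegaLomega$ has an extension in it. Given a nonempty $t = (t_0, t_1, \dots, t_{n-1})$, I would take $a = t_0 \in \omega = M_0$ and reduce to finding $d' \in D'$ extending the tail $(t_1, \dots, t_{n-1})$; since $D'$ is dense in $N^s$ (which is of the same form of infinite lexicographic product, merely reindexed to begin at $1$), such a $d'$ exists, and then $a \smallfrown d'$ satisfies $(a \smallfrown d')(0) = t_0$ and $(a \smallfrown d')(i) = d'(i) = t_i$ for $1 \le i \le n-1$, so $a \smallfrown d' \supseteq t$. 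The empty sequence is extended by any element. Thus density of $M_0[D']^{s_0}$ reduces directly to density of $D'$, the only care being the concatenation and reindexing bookkeeping between the index set $\omega$ of the product and the index set $\{1, 2, \dots\}$ of $N^s$.

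Having shown $\theta(M_0[D']^{s_0})$ is a countable dense substructure of $[M_i^s]_{i < \omega}$, \Cref{Dense substructure unique and ultrahomogeneous}(1) yields $\theta(M_0[D']^{s_0}) \cong D$, and since $\theta$ is an isomorphism I conclude $M_0[D']^{s_0} \cong \theta(M_0[D']^{s_0}) \cong D$. I expect the density check to be the only step requiring any thought, and even there the argument is a one-line reduction, so I anticipate no real obstacle.
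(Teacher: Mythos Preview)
Your proposal is correct and follows essentially the same approach as the paper: show that $M_0[D']^{s_0}$ is countable and dense in $M_0[N^s]^{s_0}$, transport along the isomorphism of \Cref{inf product is fin}, and then invoke the uniqueness part of \Cref{Dense substructure unique and ultrahomogeneous}. The paper's proof merely asserts countability and density without writing out the concatenation bookkeeping, which you have made explicit.
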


    \begin{prop}\label{inf prod fpt iff M fpt}
        Let $D\sub [M_i^s]^{}_{i < \omega}$ be a countable dense substructure. Suppose that $M_0$ has FPT$_n$. Then $D$ has FPT$_n$.

        \begin{proof}
            Let $D'\sub N^s$ be countable and dense. By \Cref{dense substructure is fin product of dense}, $D\cong M_0[D']^{s_0}$. By \Cref{fpt in fin prod}, as $M_0$ has FPT$_n$, we have that $M_0[D']^{s_0}$ has FPT$_n$.
        \end{proof}
    \end{prop}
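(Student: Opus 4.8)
The plan is to collapse the infinite lexicographic product into a two-factor product, so that the finite-factor transfer result \Cref{fpt in fin prod} applies directly. First I would fix a countable dense substructure $D' \sub N^s$, where $N^s = [N_j^s]^{}_{j \geq 1}$ is the tail product of \Cref{inf product is fin}; such a $D'$ exists and is unique up to isomorphism by \Cref{Dense substructure unique and ultrahomogeneous}. Then \Cref{dense substructure is fin product of dense} identifies $D$ with $M_0[D']^{s_0}$ up to isomorphism, reducing the whole problem to showing that $M_0[D']^{s_0}$ has FPT$_n$.

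Next I would verify that the hypotheses of \Cref{fpt in fin prod} are met for the pair $(M_0, D')$, with the relation $s_0$ playing the role of the generic $s$ in that lemma. The lemma requires $\Th(M_0)$ to be transitive; this holds because $M_0$ is, by standing assumption, a transitive \Fr structure, so any two of its points are automorphic and hence isomorphic as substructures. The remaining hypothesis, that $M_0$ has FPT$_n$, is exactly what we are given. Applying \Cref{fpt in fin prod} then yields that $M_0[D']^{s_0}$ has FPT$_n$. Finally, since FPT$_n$ is defined purely in terms of the flow $\Aut(D) \curvearrowright S_n(D)$ and is therefore invariant under isomorphism of the underlying structure, the identification $D \cong M_0[D']^{s_0}$ transports FPT$_n$ back to $D$, completing the argument.

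I do not expect a substantial obstacle here, as the two cited lemmas carry essentially all of the content; the step is really a matter of assembling them. The one point requiring care is the bookkeeping of notation: the expansion relation in $M_0[D']^{s_0}$ is $s_0$ rather than the generic $s$ of \Cref{fpt in fin prod}, and $D'$ is taken inside the already-expanded tail product $N^s$, so it carries the residual relations $s_1, s_2, \dots$. One must therefore confirm that this data matches the $M[N]^s$ template of \Cref{fpt in fin prod} under the substitution $M = M_0$, $N = D'$, with $s_0$ in place of $s$, and that the dense substructure $D'$ appearing in \Cref{dense substructure is fin product of dense} is the same object fixed at the outset. Checking this compatibility is the only routine verification the proof relies on.
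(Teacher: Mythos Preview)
Your proposal is correct and follows exactly the paper's own argument: fix a countable dense $D' \sub N^s$, invoke \Cref{dense substructure is fin product of dense} to identify $D \cong M_0[D']^{s_0}$, and then apply \Cref{fpt in fin prod} using the standing transitivity assumption on $M_0$. The additional remarks you make about verifying hypotheses and bookkeeping the $s_0$ versus $s$ notation are routine checks that the paper leaves implicit.
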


    \begin{proof}[Proof of \Cref{inf prod edrp iff M edrp}]
        By \Cref{inf prod fpt iff M fpt}, $D$ has FPT if and only if $M_0$ has FPT.
        By \Cref{Dense substructure unique and ultrahomogeneous}, $D$ is ultrahomogeneous. Recall that $M_0$ was also assumed to be ultrahomogeneous and countable. Therefore, by \Cref{FPTiffEDRP}, we have that $D$ has the externally definable Ramsey property if and only if $D$ has FPT, and $M_0$ has the externally definable Ramsey property if and only if $M_0$ has FPT.
    \end{proof}

    \begin{eg}
        If $M_i$ is an infinite set in the language of pure equality, for all $i\in \omega$, then a countable dense substructure $D\sub [M_i^s]^{}_{i < \omega}$ is the structure with infinitely many refining equivalence relations (unsaturated version) defined in \Cref{sec: inf ref eq rel unsat}.
    \end{eg}

    We end this \namecref{infinite products} by showing that a dense substructure of an infinite lexicographic product is never $\aleph_0$-saturated:

    \begin{prop}\label{inf prod not A-sat}
        $D$ is not $\aleph_0$-saturated.
    \end{prop}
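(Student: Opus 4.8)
The plan is to show $D$ fails $\aleph_0$-saturation by exhibiting an explicit $1$-type over a single parameter that is finitely satisfiable in $D$ but omitted. Recall that $D$ is (isomorphic to) a countable dense substructure of $[M_i^s]_{i < \omega}$, so we may take its domain to be a dense subset of $\omegaomega$, and by \Cref{remark: expansion of inf product} the refining equivalence relations satisfy $D \models s_n(x,y) \iff x(i) = y(i)$ for all $i \leq n$. Fix any $d \in D$ and consider the partial type over $\{d\}$ given by
\[
p(x) \;=\; \{\, x \neq d \,\} \;\cup\; \{\, s_n(x,d) : n < \omega \,\}.
\]

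First I would verify that $p$ is finitely satisfiable in $D$. Since $s_0 \supseteq s_1 \supseteq \cdots$, a finite subset of $p$ is entailed by $\{\, x \neq d,\; s_N(x,d)\,\}$ for $N$ large enough, so it asks for an element of $D$ agreeing with $d$ on coordinates $0,\dots,N$ but distinct from $d$. As the factor $M_{N+1}$ has domain $\omega$, choose $v \in \omega$ with $v \neq d(N+1)$; applying density of $D$ to the finite sequence $(d(0),\dots,d(N),v) \in \omegaLomega$ yields some $d' \in D$ extending it. Then $d'$ agrees with $d$ up to coordinate $N$, so $D \models s_n(d',d)$ for all $n \leq N$, while $d'(N+1) = v \neq d(N+1)$ forces $d' \neq d$.

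Next I would observe that $p$ is not realised in $D$: any realisation $d^\ast$ satisfies $s_n(d^\ast,d)$ for every $n$, hence $d^\ast(i) = d(i)$ for all $i < \omega$ and so $d^\ast = d$, contradicting $x \neq d$. Thus $p$ is a consistent partial $1$-type over the finite parameter set $\{d\}$ that $D$ omits, so $D$ is not $\aleph_0$-saturated. There is essentially no obstacle here; the only point requiring a little care is the appeal to density to produce, for each finite approximation, a point of $D$ that is close to $d$ yet different from it, which is exactly where the infinitude of the factors is used. (One could alternatively try to deduce this from \Cref{dense substructure is fin product of dense} together with \Cref{saturation in product}, but that route only reduces $\aleph_0$-saturation of $D$ to that of $M_0$ and of a smaller dense product of the same shape, so it does not terminate without an argument of the above kind anyway.)
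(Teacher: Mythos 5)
Your proof is correct and is essentially the paper's own argument: the paper exhibits the same partial type $\{x\neq y\}\cup\{s_i(x,y)\mid i\in\omega\}$ and notes it is finitely satisfiable but not satisfiable in $D$. You have merely spelled out the density argument for finite satisfiability and fixed the second variable as a parameter, which changes nothing of substance.
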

    \begin{proof}
        The partial type $\{x\neq y\}\,\cup\,\{s_i(x,y) \mid i\in \omega\}$ is finitely satisfiable but not satisfiable in $D$.
    \end{proof}

    \begin{prop}\label{continuoum many edrp}
        Let $\mc{L}$ be a countable relational language, and let $T$ be a universal $\mc{L}$-theory closed under lexicographic products, i.e.\ if $M,N\models T$ then $M[N]\models T$. Let $T^s$ be the $\mc{L} \cup \{s_i \mid i\in \omega\}$-theory defined as $T$ together with the $\{s_i \mid i\in \omega\}$-theory of infinitely many refining equivalence relations with infinite classes.
    
        Suppose that $T$ has at least two countable transitive ultrahomogeneous models, one of which has the externally definable Ramsey property. Then the theory $T^s$ has $2^{\aleph_0}$ many pairwise non-isomorphic, countable, transitive ultrahomogeneous models with the externally definable Ramsey property which are not $\aleph_0$-saturated.
    \end{prop}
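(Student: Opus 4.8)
The plan is to realise the $2^{\aleph_0}$ required models as countable dense substructures of infinite lexicographic products, one for each element of $2^\omega$, and to distinguish them by recovering the sequence of factors from the $s_i$-structure. Using the hypothesis, fix two non-isomorphic countably infinite transitive ultrahomogeneous models $M^0, M^1 \models T$, chosen so that $M^0$ has the externally definable Ramsey property; after renaming domains we may take each to be a \Fr structure with domain $\omega$. For $\eta \in 2^\omega$ with $\eta(0) = 0$, put $M_i^\eta := M^{\eta(i)}$, so $M_0^\eta = M^0$, and form the infinite lexicographic product $[(M_i^\eta)^s]_{i < \omega}$ exactly as in \Cref{infinite products}; let $D_\eta$ be its countable dense substructure, which is unique up to isomorphism by \Cref{Dense substructure unique and ultrahomogeneous}.

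First I would check that each $D_\eta$ has the required properties. It is countable, transitive and ultrahomogeneous by \Cref{Dense substructure unique and ultrahomogeneous}; it has the externally definable Ramsey property by \Cref{inf prod edrp iff M edrp}, since $M_0^\eta = M^0$ does; and it is not $\aleph_0$-saturated by \Cref{inf prod not A-sat}. To see $D_\eta \models T^s$: by \Cref{remark: expansion of inf product} the symbols $s_i$ define a decreasing chain of equivalence relations on $D_\eta$, and each $s_n$-class is infinite by density; moreover the $\mc{L}$-reduct of $[(M_i^\eta)]_{i<\omega}$ models $T$, because every finite substructure of it is (isomorphic to) a substructure of some finite lexicographic product $M_0^\eta[M_1^\eta[\cdots[M_N^\eta]\cdots]]$ (taking $N$ past the finitely many relevant branchings), which models $T$ by iterating the closure hypothesis, and $T$ — being universal — holds in a structure as soon as it holds in all finite substructures; since $D_\eta$ is a substructure of this reduct, universality also gives $D_\eta \models T$, and hence $D_\eta \models T^s$.

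The core of the argument is injectivity of $\eta \mapsto D_\eta$. For this I would show that each factor $M_n^\eta$ is recoverable up to isomorphism from $D_\eta$ \emph{as an $\mc{L} \cup \{s_i : i < \omega\}$-structure}: fix $d_0 \in D_\eta$, let $C$ be its $s_{n-1}$-class (with $C = D_\eta$ when $n = 0$), and choose a transversal $\mc{T} \sub C$ for the partition of $C$ into $s_n$-classes. Any tuple of pairwise distinct elements of $\mc{T}$ agrees below level $n$ and disagrees at level $n$, so by \Cref{OmegaLOmega product} the induced $\mc{L}$-structure on $\mc{T}$ is computed in $M_n^\eta$, and by density the map $d \mapsto d(n)$ is an isomorphism $\mc{T} \isoto M_n^\eta$. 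Since an isomorphism $D_\eta \to D_{\eta'}$ preserves each $s_n$ (these being symbols of the language) and hence carries such a transversal to such a transversal, $D_\eta \cong D_{\eta'}$ forces $M_n^\eta \cong M_n^{\eta'}$ for every $n$; as $M^0 \not\cong M^1$ this gives $\eta(n) = \eta'(n)$ for $n \geq 1$, so $\eta = \eta'$. Hence $\{D_\eta : \eta \in 2^\omega,\ \eta(0) = 0\}$ is a family of $2^{\aleph_0}$ pairwise non-isomorphic models of $T^s$ with the stated properties.

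The step I expect to be the main obstacle is the recovery of the factors: one has to verify carefully that the induced structure on the transversal really is isomorphic to $M_n^\eta$ — handling relations of arity greater than two, the reflexive ``loop'' case via transitivity, and the canonicity of the recipe (independence of the chosen $d_0$ and transversal $\mc{T}$) so that it is preserved by abstract isomorphisms. A secondary, routine point is the embedding of finite substructures of the infinite lexicographic product into finite lexicographic products, which must be formulated precisely in order to invoke universality of $T$; we also tacitly use that the two given models of $T$ may be taken countably infinite.
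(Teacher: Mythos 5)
Your proposal is correct and follows essentially the same route as the paper: the same family of dense substructures of infinite lexicographic products $[M^{\eta(i)}]_{i<\omega}$ with the $0$-th factor fixed to be the EDRP model, the same appeals to \Cref{Dense substructure unique and ultrahomogeneous}, \Cref{inf prod edrp iff M edrp} and \Cref{inf prod not A-sat}, and the same non-isomorphism mechanism (a transversal of the $s_n$-classes inside an $s_{n-1}$-class carries an induced copy of the $n$-th factor, which any isomorphism must preserve). The only cosmetic differences are that the paper argues only at the least level where the two indices disagree rather than recovering every factor, and leaves the verification that the dense substructure models $T^s$ implicit.
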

    \begin{proof}
        Take $A_0,A_1$ to be two distinct countable transitive ultrahomogeneous models of $T$, such that $A_0$ has the externally definable RP. For each $a\in 2^{\omega\setminus \{0\}}$, let $M(a)_0 = A_0$ and $M(a)_j = A_{a(j)}$ for $j \geq 1$. Let $M(a)$ be a countable dense substructure of $[M(a)_i^s]^{}_{i < \omega}$. By \Cref{Dense substructure unique and ultrahomogeneous}, $M(a)$ is ultrahomogeneous. By \Cref{inf prod edrp iff M edrp}, $M(a)$ has the externally definable Ramsey property. By \Cref{inf prod not A-sat}, $M(a)$ is not $\aleph_0$-saturated.
        
        It remains to show that $M(a)\not\cong M(b)$ for distinct $a, b \in 2^{\omega\setminus \{0\}}$.  Assume for a contradiction that $f:M(a)\to M(b)$ is an isomorphism. Let $m < \omega$ be least such that $a(m+1)\neq b(m+1)$. By density of $M(a)$, for each $i < \omega$ there is $\sigma_i \in M(a)$ with initial segment $(a(0), \cdots, a(m), i)$. Notice that the $\sigma_i$ are pairwise $s_m$-equivalent, but not $s_{m+1}$-equivalent, and, in addition, they $s_{m+1}$-cover their equivalence class. As $f$ is an $\mc{L}\cup\{s_i \mid i < \omega\}$-isomorphism, the same properties are also satisfied by the $f(\sigma_i)$ in $M(b)$. Finally, notice that, by the definition of the infinite lexicographic product, the substructure induced on $\{\sigma_i\}_{i\in \omega}$ in $M(a)$ is isomorphic to $M(a)_{m+1}$ and the substructure induced on $\{f(\sigma_i)\}_{i\in \omega}$ in $M(b)$ is isomorphic to $M(b)_{m+1}$. But $M(b)_{m+1}\cong A_{b(m+1)}\not\cong A_{a(m+1)}\cong M(a)_{m+1}$, contradiction.
    \end{proof}

    \begin{eg}
        Examples of $T$ satisfying the assumptions of \Cref{continuoum many edrp} include:
        \begin{itemize}
            \item the theory of $n$-hypergraphs, for $n\geq 2$;
            \item the theory of $K_m$-free $n$-hypergraphs for $m\geq n\geq 3$;
            \item the theory of partially ordered sets.
        \end{itemize}
    \end{eg}

\section{Further questions} \label{furtherqnssec}

    Firstly, we have two specific questions regarding the basic lemmas of \Cref{basicpropertiessec} and \Cref{FPTsec}:

    \begin{qn}
        Does there exist a \Fr structure $M$ having $\bar{a}, B \in \Age(M)$ such that $M \to_{\ext} (B)^{\bar{a}}_2$, but where there does not exist $C \in \Age(M)$ with $C \to_{\ext} (B)^{\bar{a}}_2$?
    \end{qn}
    \begin{qn}[see \Cref{FPTn+1impliesn}]
        Is there a \Fr structure $M$ which has FPT$_n$ for some $n \in \N_+$, but which does not have FPT$_{n+1}$?
    \end{qn}

    A positive answer to the first question (which we regard as probable) would show that the condition of finiteness of the set of formulae in \Cref{Ramseyforage} is indeed necessary.
    
    We also have two questions regarding whether the assumptions in \Cref{FPTiffEDRP} are necessary:

    \begin{qn}
        Does there exist an ultrahomogeneous structure $M$ with uncountable age such that $M$ has the externally definable Ramsey property but does not have FPT?
    \end{qn}
    \begin{qn}
        Does there exist an example of a non-ultrahomogeneous structure $M$ which has FPT but does not have the externally definable Ramsey property?
    \end{qn}

    Considering the Ramsey classification programme of \Jarik (\cite{Nes05}) and the central results of \cite{KPT05}, \cite{NVT13}, we have a number of general questions regarding externally-definable analogues.

    \begin{qn}
        Is it possible to classify \Fr structures with the externally definable Ramsey property, given certain restrictive assumptions (e.g.\ in a binary language or a finite relational language)? 
    \end{qn}
    \begin{qn}
        Does every \Fr structure in a finite relational language have a finite relational expansion with the externally definable Ramsey property?
    \end{qn}
    \begin{qn}
        If a \Fr structure $M$ has a precompact expansion with the externally definable Ramsey property, what does this imply about the topological dynamics of $\Aut(M)$?
    \end{qn}

    We would also be interested in seeing a wider range of examples of structures with the externally definable Ramsey property, in order to better understand its combinatorial and model-theoretic character. Specifically:

    \begin{qn}
        In this paper, for most examples given of \Fr structures $M$ with the externally definable Ramsey property, we show this property by checking the $\bar{a}$-RP for all colourings, for $|\bar{a}| < t$ (where $t$ is the maximum arity of $\mc{L}$ and we have QE). Can we find interesting examples of \Fr structures $M$ where $M$ has the externally definable Ramsey property, but does not have the $\bar{a}$-RP for $|\bar{a}| < t$? (See \Cref{singleeqrel}.)
    \end{qn}
    \begin{qn}
        Are there natural examples of \Fr structures without QE which have the externally definable Ramsey property?
    \end{qn}
    \begin{qn}
        Are there other natural examples of structures which are not ultrahomogeneous and which have the externally definable Ramsey property? (See \Cref{disjrandgraphs} and \Cref{RPimpliesAP}.)
    \end{qn}

    We would also like to know the following regarding the fixed points on type spaces property (FPT):
    \begin{qn}
        Do there exist \Fr structures $M, M'$ (possibly in different languages), with $\Aut(M), \Aut(M')$ isomorphic as topological groups, such that $M$ has FPT and $M'$ does not?
    \end{qn}

\bibliographystyle{abbrv}
\bibliography{super}

\end{document}